\newcommand{\Z}{\ensuremath{\mathbb{Z}}}
\newcommand{\Q}{\ensuremath{\mathbb{Q}}}
\newcommand{\R}{\ensuremath{\mathbb{R}}}
\newcommand{\CC}{\ensuremath{\mathbb{C}}}
\newcommand{\A}{\ensuremath{\mathbb{A}}}
\newcommand{\F}{\ensuremath{\mathbb{F}}}
\newcommand{\Nrd}{\operatorname{Nrd}}	
\newcommand{\Trd}{\operatorname{Trd}}	
\newcommand{\topwedge}{\ensuremath{\bigwedge^{\mathrm{max}}}}
\newcommand{\Sym}{\operatorname{Sym}}
\newcommand{\twomatrix}[4]{ \ensuremath{\begin{pmatrix} #1 & #2 \\ #3 & #4 \end{pmatrix}} }	
\newcommand{\Gal}{\operatorname{Gal}}
\newcommand{\dd}{\mathop{}\!\mathrm{d}}
\newcommand{\Schw}{\ensuremath{\mathcal{S}}}	
\newcommand{\relgeq}[1]{\ensuremath{\underset{#1}{\geq}}}	
\newcommand{\relgg}[1]{\ensuremath{\underset{#1}{\gg}}}		
\newcommand{\lrangle}[1]{\ensuremath{\langle #1 \rangle}}
\newcommand{\Stab}{\ensuremath{\mathrm{Stab}}}
\newcommand{\identity}{\ensuremath{\mathrm{id}}}
\newcommand{\Hom}{\operatorname{Hom}}
\newcommand{\End}{\operatorname{End}}
\newcommand{\rightiso}{\ensuremath{\stackrel{\sim}{\rightarrow}}}
\newcommand{\Ker}{\operatorname{ker}}
\newcommand{\dotimes}[1]{\ensuremath{\underset{#1}{\otimes}}}
\newcommand{\Gm}{\ensuremath{\mathbb{G}_\mathrm{m}}}
\newcommand{\dtimes}[1]{\ensuremath{\underset{#1}{\times}}}
\newcommand{\GL}{\operatorname{GL}}
\newcommand{\SO}{\operatorname{SO}}
\newcommand{\SL}{\operatorname{SL}}
\newcommand{\Sp}{\operatorname{Sp}}
\newcommand{\Spin}{\operatorname{Spin}}
\theoremstyle{plain}
\newtheorem{proposition}{Proposition}
\newtheorem{lemma}[proposition]{Lemma}
\newtheorem{theorem}[proposition]{Theorem}
\newtheorem{corollary}[proposition]{Corollary}
\theoremstyle{definition}
\newtheorem{definition}[proposition]{Definition}
\newtheorem{definition-theorem}[proposition]{Definition-Theorem}
\newtheorem{definition-proposition}[proposition]{Definition-Proposition}
\newtheorem{remark}[proposition]{Remark}
\newtheorem{hypothesis}[proposition]{Hypothesis}
\newtheorem{example}[proposition]{Example}
\theoremstyle{definition}
\theoremstyle{plain}
\numberwithin{equation}{section}
\numberwithin{table}{section}
\numberwithin{proposition}{section}
\numberwithin{conj}{section}	
\renewcommand{\Re}{\operatorname{Re}}	
\title{Variations on themes of Sato}
\author[W.-W. Li]{Wen-Wei Li}
\address{School of Mathematical Sciences / Beijing International Center of Mathematical Research, Peking University \\
	No.\ 5 Yiheyuan Road, Beijing 100871, People's Republic of China.}
\thanks{This work is supported by NSFC-11922101.}
\email{wwli@bicmr.pku.edu.cn}
\subjclass[2010]{Primary 11S40; Secondary 11S90, 43A85}
\keywords{Zeta integrals, prehomogeneous vector spaces}
\date{}
\begin{document}

\begin{abstract}
	In the first part of this article, we review a formalism of local zeta integrals attached to spherical reductive prehomogeneous vector spaces, which partially extends M.\ Sato's theory by incorporating the generalized matrix coefficients of admissible representations. We summarize the basic properties of these integrals such as the convergence, meromorphic continuation and an abstract functional equation. In the second part, we prove a generalization that accommodates certain non-spherical spaces. As an application, the resulting theory applies to the prehomogeneous vector space underlying Bhargava's cubes, which is also considered by F.\ Sato and Suzuki--Wakatsuki in their study of toric periods. 
\end{abstract}

\maketitle


\tableofcontents

\section{Introduction}\label{sec:intro}

\subsection{The works of Mikio Sato and Godement--Jacquet}
The origin of zeta integrals can be traced back to Tate's thesis, who exploited the integration of characters against Schwartz--Bruhat functions on the affine line to represent abelian $L$-functions, in both the local and global setting. In the 60's, Mikio Sato proposed a partial extension of Tate's construction by considering a \emph{prehomogeneous vector space} $(G, \rho, X)$; this means that $G$ is a linear algebraic group over a field $F$, and $\rho: G \to \GL(X)$ is a homomorphism of algebraic groups, such that $X$ has a Zariski-open orbit $X^+$. By convention, we let $\GL(X)$ act on the right of $X$. A rational function $f$ on $X$ is said to be a \emph{relative invariant} with eigencharacter $\omega: G \to \Gm$, if $f(x\rho(g)) = \omega(g)f(x)$ for all $x, g$.

Let us consider the case when $F$ is a local field of characteristic zero. For the sake of simplicity, assume temporarily that $X^+(F)$ is a single $G(F)$-orbit and that $\partial X := X \smallsetminus X^+$ is an irreducible divisor, defined by a relative invariant $f$. In such a local setting, Sato's zeta integral takes the form
\[ Z(\lambda, \xi_0) = \int_{X(F)} |f|^\lambda \xi_0 |\Omega| \]
where
\begin{compactitem}
	\item $\lambda \in \CC$,
	\item $\xi_0$ is a Schwartz--Bruhat function on $X(F)$,
	\item $\Omega \in \topwedge \check{X}$, $\Omega \neq 0$, so that $|\Omega|$ is a Haar measure on $X(F)$.
\end{compactitem}

The case $G = \Gm$ of acting on the affine line recovers Tate's thesis for the trivial character of $F^\times$. These integrals and their global avatars played a vital role in various questions of number theory. We refer to \cite{Ki03, Sa89} for a more complete survey of Sato's theory.

There are three basic yet non-trivial properties of local integrals $Z(\lambda, \xi)$.
\begin{description}
	\item[Convergence] The integrals converge when $\Re(\lambda) \gg 0$, and are holomorphic in $\lambda$ in the interior of that region.
	\item[Meromorphic continuation] They admit meromorphic continuation to all $\lambda \in \CC$, which are rational in $q^\lambda$ when $F$ is non-Archimedean with residual field $\F_q$.
	\item[Local functional equation] When $(G, \rho, X)$ is \emph{regular} and $\partial X$ is a hypersurface, its contragredient $(G, \check{\rho}, \check{X})$ is also prehomogeneous and the zeta integrals $Z(-\lambda, \mathcal{F}_\psi \xi_0)$ and $Z(\lambda, \xi_0)$ are equal up to a meromorphic/rational factor $\gamma(\lambda, \psi)$, where $\mathcal{F}_\psi$ is the Fourier transform relative to $\psi$. When $F$ is non-Archimedean, some conditions on the geometry of $\partial X$ are needed; see \cite{Sa89}.
\end{description}

We remark that when $X^+(F)$ comprises several $G(F)$-orbits, one should consider the integrals of $|f|^\lambda \xi$ on each orbit separately, and as a result, the functional equation will involve a $\gamma$-matrix instead of a scalar $\gamma$-factor. Also, when $\partial X$ comprises several codimension-one irreducible components, defined by the \emph{basic relative invariants} $f_1, \ldots, f_r$, the integration should involve $\prod_{i=1}^r |f_i|^{\lambda_i}$.

Another generalization of Tate's thesis is \emph{Godement--Jacquet theory} \cite{GJ72}. In the local case, it concerns a central simple $F$-algebra $D$ of dimension $n^2$ and the integral
\[ Z^{\mathrm{GJ}}\left( \lambda, v \otimes \check{v}, \xi_0 \right) := \int_{D^\times(F)} \lrangle{\check{v}, \pi(x)v} \left| \mathrm{Nrd}(x) \right|^{\lambda + \frac{n-1}{2}} \xi_0(x) \dd^\times x \]
where
\begin{compactitem}
	\item $\lambda \in \CC$,
	\item $\Nrd$ is the reduced norm of $D$,
	\item $\xi_0$ is a Schwartz--Bruhat function on $D(F)$,
	\item $v$ (resp.\ $\check{v}$) is a vector in the admissible representation $\pi$ of $D^\times(F)$ (resp.\ its contragredient),
	\item $\dd^\times x$ is a Haar measure on $D^\times(F)$.
\end{compactitem}

Identify $D$ with its dual by the reduced trace form. Again, we have three basic properties: convergence for $\Re(\lambda) \gg 0$, meromorphic/rational continuation, as well as a functional equation involving Fourier transform on $D(F)$ and a $\gamma$-factor $\gamma(\lambda, \pi, \psi)$ that is independent of $v, \check{v}$. The case $n=1$ recovers Tate's thesis.

Comparing these two formalisms, one may say that in Sato's framework, the basic properties of zeta integrals are deduced from the geometry of prehomogeneous vector spaces in a transparent manner, whereas Godement--Jacquet theory sometimes resorts to \textit{ad hoc} arguments to reduce to $n=1$. Also, there is a wider variety of choices of $(G, \rho, X)$, compared to the case of central simple algebras.

On the other hand, Godement--Jacquet theory affords the standard $L$-functions, whilst the $L$-functions arising from Sato's integrals are usually of a degenerate nature. This is unsurprising since Sato's integrals involve no admissible representation of $G(F)$ other than $|\omega|^\lambda$, where $\omega$ ranges over eigencharacters of relative invariants.

\subsection{Generalized prehomogeneous zeta integrals}
In \cite{LiLNM, Li18, Li19}, the author proposed a formalism which might be seen as a joint extension of Sato and Godement--Jacquet in the local case, by considering
\begin{itemize}
	\item a prehomogeneous vector space $(G, \rho, X)$ such that $G$ is connected reductive $F$-group, and $X^+$ is an affine spherical\footnote{Also known as \emph{absolutely spherical} by many other authors.} homogeneous $G$-space;
	\item when $F$ is non-Archimedean, we require that either $G$ is split and $X^+$ is wavefront, or $X^+$ is a symmetric space --- these conditions are inherited from \cite{SV17}, and can probably be improved;
	\item an admissible\footnote{For Archimedean $F$, this means an SAF, i.e.\ Casselman--Wallach representation.} representation $\pi$ of $G(F)$ with underlying vector space $V_\pi$;
	\item local zeta integrals of the form (Definition \ref{def:zeta})
	\[ Z_\lambda(\eta, v, \xi) = \int_{X^+(F)} \eta(v) |f|^\lambda \xi , \]
	where $\lambda \in \Lambda_{\CC}$, $\eta \in \mathcal{N}_\pi(X^+)$, $v \in V_\pi$ and $\xi \in \Schw(X)$.
\end{itemize}
Several explanations are in order.
\begin{itemize}
	\item We choose basic relative invariants $f_1, \ldots, f_r$ with eigencharacters $\omega_1, \ldots, \omega_r: G \to \Gm$, with $\Lambda := \bigoplus_{i=1}^r \Z \omega_i \subset \Hom(G, \Gm)$ and $\Lambda_{\CC} := \Lambda \otimes \CC$. Define $|f|^\lambda = \prod_{i=1}^r |f_i|^{\lambda_i}$ if $\lambda = \sum_{i=1}^r \omega_i \otimes \lambda_i$.
	\item Sphericity of $X^+$ means that it has an open Borel orbit, after base-change to the algebraic closure $\overline{F}$.
	\item We define $\mathcal{N}_\pi(X^+)$ to be the space of embeddings $\pi \to C^\infty(X^+)$ as smooth representations of $G(F)$, with the caveat that $C^\infty(X^+)$ is valued in the line bundle $\mathcal{L}^{1/2}$ of \emph{half-densities}, i.e.\ of the square-roots of volume forms.
	\item Likewise, $\Schw(X)$ is the space of Schwartz--Bruhat half-densities. Then $\eta(v)\xi |f|^\lambda$ is a $1$-density, i.e.\ measure-valued function, so its integration over $X^+(F)$ makes sense, if convergent.
\end{itemize}

The line bundle $\mathcal{L}^{1/2}$ is $G(F)$-equivariantly trivializable over $X^+(F)$ (Proposition \ref{prop:L-trivializable}). Hence one can switch to the scalar-valued picture, if desired.

Modulo some issues of shifts in $\lambda$ arising from half-densities, taking $\pi = \mathbf{1}$ recovers Sato's local zeta integrals for $(G, \rho, X)$, whilst taking the prehomogeneous vector space $(D^\times \times D^\times, \rho, D)$ with $x\rho(g_1, g_2) = g_2^{-1} x g_1$ recovers Godement--Jacquet theory. The sphericity implies that $\eta(v)$, the \emph{generalized matrix coefficients} of $\pi$ on $X^+$, are well-behaved; for example it can imply $\dim_{\CC} \mathcal{N}_\pi(X^+) < +\infty$ (Theorem \ref{prop:finite-mult}). We refer to \cite{SV17} for an overview of the harmonic analysis on spherical homogeneous spaces.

We remark that similar constructions have also been envisaged by Bopp--Rubenthaler \cite{BR05} and Fumihiro Sato \cite{Sa94,Sa06}, often with more restrictions on $(G, \rho, X)$ or $\pi$; it is also motivated by a proposal of Y.\ Sakellaridis \cite{Sak12}. In F.\ Sato's works, one can sometimes allow non-spherical $X^+$ by constraining $\pi$; we will return to this point later on.

A preliminary, yet unavoidable step is to establish the three basic properties for these integrals, namely:
\begin{description}
	\item[Convergence] $Z_\lambda(\eta, v, \xi)$ converges when $\Re(\lambda_i) \relgg{X} 0$ for all $i$, with $\lambda = \sum_{i=1}^r \omega_i \otimes \lambda_i$; the bound can be made uniform in $\eta$, $v$ and $\xi$.
	\item[Meromorphy/rationality] It admits a meromorphic continuation to all $\lambda \in \Lambda_{\CC}$, which is rational in $q^{\lambda_1}, \ldots, q^{\lambda_r}$ when $F$ is non-Archimedean with residual field $\F_q$.
	\item[Functional equation] The dual triplet $(G, \check{\rho}, \check{X})$ satisfies the same premises, and the corresponding $\check{Z}_\lambda$ satisfies
	\[ \check{Z}_\lambda\left( \check{\eta}, v, \mathcal{F}_\psi \xi \right) = Z_\lambda\left( \gamma(\lambda, \pi, \psi)(\check{\eta}), v, \xi \right) \]
	for all $\check{\eta} \in \mathcal{N}_\pi(\check{X}^+)$, $\xi \in \Schw(X)$, $v \in V_\pi$ and $\mathcal{F}_\psi$ is the Fourier transform of Schwartz--Bruhat half-densities. Here
	\[ \gamma(\pi, \lambda, \psi): \mathcal{N}_\pi(\check{X}^+) \to \mathcal{N}_\pi(X^+) \]
	is a uniquely determined meromorphic/rational family of $\CC$-linear maps. When $F$ is non-Archimedean, we also need a further condition on $\partial X$; see Hypothesis \ref{hyp:LFE}.
\end{description}

Thanks to half-densities, our formulation has fewer shifts or signs than the traditional setting; it also renders $\mathcal{F}_\psi$ equivariant. These results are obtained in \cite{LiLNM} for non-Archimedean $F$, in \cite{Li18, Li19} for Archimedean $F$, and the first half of this article is devoted to a survey of these results (Theorems \ref{prop:convergence-meromorphy}, \ref{prop:LFE}).

The reductive prehomogeneous vector spaces $(G, \rho, X)$ with $X^+$ spherical are also known as \emph{multiplicity-free spaces}. When $F = \overline{F}$ and $\mathrm{char}(F)=0$, the irreducible multiplicity-free spaces have been classified by V.\ Kac, and the general case is done in \cite{Le98}. Table \ref{tab:Kac} reproduces Kac's classification, following \cite[\S 11]{HU91} and \cite{Um98}, by recording only those with $X^+$ affine. By Matsushima's criterion \cite[Theorem 3.8]{Ti11}, $X^+$ is affine if and only if $H^\circ$, the identity connected components of generic stabilizers $H$ in $X$, are reductive.

\begin{table}[h]\centering
	\[\def\arraystretch{1.2}
	\begin{array}{c|c|c|c}
		G & X & H^\circ & \text{conditions} \\ \hline
		\GL(n) \times \GL(n) & \mathrm{M}_n & \GL(n) & \\
		\GL(n) & \Sym^n (F^n) & \SO(n) & \\
		\GL(2n) & \wedge^2 (F^{2n}) & \Sp(2n) & \\
		\mathrm{O}(n) \cdot \Gm & F^n & \SO(n-1) & \\
		\Sp(2n) \times \GL(2) & F^{2n} \otimes F^2 & \GL(2) & n \geq 2 \\
		\Sp(4) \times \GL(4) & F^4 \otimes F^4 & \Sp(4) & \\
		\Spin(7) \times \Gm & \mathrm{spin} & \mathrm{G}_2 & \\
		\Spin(9) \times \Gm & \mathrm{spin} & \Spin(7) & \\
		\mathrm{G}_2 \times \Gm & \dim=7 & \SL(3) & \\
		\mathrm{E}_6 \times \Gm & \dim=27 & \mathrm{F}_4 &
	\end{array}\]
	\caption{Irreducible multiplicity-free spaces with $X^+$ affine}
	\label{tab:Kac}
\end{table}

In Table \ref{tab:Kac}, the factors $\Gm$ act by dilation. For further explanations, see \cite{HU91, Um98}. The case of $\mathrm{E}_6 \times \Gm$ is also discussed in \cite[Example 3.12]{Li18}.

\subsection{Non-spherical cases}
The sphericity of $X^+$ is a rather strong requirement. It is possible to loosen it at the cost of constraining the admissible representation $\pi$ in question. This is motivated by the pioneering works of Fumihiro Sato \cite{Sa94, Sa06} and the recent work of Miyu Suzuki and Satoshi Wakatsuki \cite{SW20}. Both concentrate on the global picture related to the periods of automorphic forms.

To go beyond the spherical case, we consider a reductive prehomogeneous vector space $(G, \rho, X)$ with affine open orbit $X^+$, together with a normal subgroup $N \lhd G$, subject to the following conditions (Hypothesis \ref{hyp:beyond-sphericity}). Set $\overline{G} := G/N$.
\begin{itemize}
	\item $N$ acts freely on $X^+$;
	\item $G(F) \to \overline{G}(F)$ is surjective;
	\item $N$ is semisimple;
	\item the categorical quotient $Y^+ := X^+ /\!/ N$, as a homogeneous $\overline{G}$-space, satisfies the conditions of sphericity, etc.\ as before.
\end{itemize}
It turns out that $(G, \check{\rho}, \check{X})$ satisfies the same properties (Lemma \ref{prop:duality-beyond}). Since $\mathrm{char}(F) = 0$, it also implies that $X^+$ is an $N$-torsor.

The admissible representations $\pi$ of $G(F)$ are supposed to be trivial on $N(F)$, i.e.\ they are inflated from representations of $\overline{G}(F)$. Taking $N = \{1\}$ reduces to the original setting.

In this setting, one can define $\mathcal{N}_\pi(X^+)$ and $Z_\lambda(\eta, v, \xi)$ in exactly the same way. Note that $\dim_{\CC} \mathcal{N}_\pi(X^+) < +\infty$ by the same result of finiteness for $Y^+$.

In Theorems \ref{prop:LFE-beyond} and \ref{prop:convergence-meromorphy-beyond}, we will prove the three basic properties of zeta integrals in this setting: convergence, meromorphy/rationality, and the functional equation. Note that for the non-Archimedean functional equation, we impose the same Hypothesis \ref{hyp:LFE} on $\partial X$.

In particular, this includes the case considered by Suzuki--Wakatsuki \cite{SW20}. Note that the same prehomogeneous vector space also appeared in Bhargava's work \cite{Bha04}.

Let us conclude by a few words on the proof for non-spherical cases. The required properties are not a mere ``pull-back'' from $Y^+$ to $X^+$, since the Schwartz--Bruhat spaces and the Fourier transform live on the level of $X$ and $\check{X}$.

\begin{itemize}
	\item For the Archimedean case, say $F = \R$, the convergence and meromorphic continuation are established as in \cite{Li18, Li19}: we use the standard estimates as well as the holonomicity of $K$-finite generalized matrix coefficients on $Y^+(\R)$, which can be easily pulled back to $X^+(\R)$. The functional equation is slightly more delicate. We have to recast Knop's work \cite{Kn98} on invariant differential operators on multiplicity-free spaces into a suitable form. Specifically, we must calculate the top homogeneous component of the image of certain operators of Capelli-type (arising from relative invariants) under Knop's Harish-Chandra homomorphism, in terms of data on $X$. This is done in \S\S\ref{sec:invariant-theory}---\ref{sec:D}.
	
	\item For the non-Archimedean case, we follow the same arguments as in \cite{LiLNM}; in particular, we prove the rational continuation via Igusa theory. However, one has to consider toroidal embeddings for the non-spherical varieties $X^+$ and $X$, in a manner compatible with those of $Y^+$. To this end, we invoke the general theory by F.\ Knop and B.\ Krötz \cite{KK16}.
	
	On the other hand, the proof of non-Archimedean functional equation is proved in the same way as \cite[\S 6.3]{LiLNM}, under the same premises.
\end{itemize}

Since the proofs largely follow the same pattern as in the spherical case, we will only give brief sketches in \S\S\ref{sec:pf-arch}---\ref{sec:pf-nonarch}.

We consider only the local integrals in this article. Nonetheless, the long-term goal is to study their relation to the global integrals, and explore the arithmetic consequences.

\subsection*{Organization}
In \S\S\ref{sec:PVS}---\ref{sec:spherical-case}, we summarize the basic results on generalized prehomogeneous zeta integrals in the spherical case. In \S\ref{sec:beyond-spherical}, we present an extension to certain non-spherical cases, and state the main theorems. In \S\ref{sec:SW}, we illustrate the extended formalism in the setting of Suzuki--Wakatsuki. The proofs for the non-spherical case occupy \S\S\ref{sec:invariant-theory}---\ref{sec:pf-nonarch}.

\subsection*{Acknowledgements}
The results in \S\S\ref{sec:PVS}---\ref{sec:spherical-case} were presented during the First JNT Biennial Conference, held in Cetraro, July 2019. The author is deeply grateful to the organizing committee for providing him this opportunity. Thanks also go to Miyu Suzuki and Satoshi Wakatsuki, for kindly sharing their preprint \cite{SW20} and urging the author to think about non-spherical cases. This work is supported by NSFC-11922101.

\subsection*{Conventions}
The normalized absolute value on a local field is denoted by $|\cdot|$.

For any variety $X$ over a field $F$ and an extension $E|F$ of fields, we write $X(E)$ for the set of $E$-points of $X$. We also write $X \dtimes{F} E$ for its base-change to $E$. When $X$ is smooth, the tangent (resp.\ cotangent) bundle is denoted by $TX$ (resp.\ $T^* X$). The algebra of regular functions on $X$ is denoted by $F[X]$. The function field of an irreducible variety $X$ is denoted by $F(X)$.

The Lie algebra of $G$ is denoted by $\mathfrak{g}$, and so forth. By convention, algebraic groups act on the right of varieties. If $G$ is a connected reductive group, a $G$-variety means a normal irreducible variety with (right) $G$-action; if the action is transitive, it is called a homogeneous $G$-space. In particular, for a finite-dimensional vector space $X$, the algebraic group $\GL(X)$ acts on the right of $X$.

In contrast, the representations of locally compact groups act on the left. The underlying space of such a representation $\pi$ is written as $V_\pi$. When a group $G$ acts on the right of a space (resp.\ variety) $X$, it acts on the left of function on $X$ (resp.\ regular functions on $X$) by $(gf)(x) = f(xg)$.

The dual of a vector space $X$ is denoted by $\check{X}$. The top exterior power of $X$ is denoted by $\topwedge X$ when $\dim X$ is finite. The contragredient of a representation $\rho$ is denoted by $\check{\rho}$. The trivial representation is denoted by $\mathbf{1}$.

The space of $n \times n$ matrices is denoted by $\mathrm{M}_n$. For a linear algebraic group $G$ over a field $F$, we write $\mathbf{X}^*(G) := \Hom_{F\text{-group}}\left(G, \Gm \right)$, which is an additive group.

The discriminant of a quadratic form $q$ is denoted by $\mathrm{disc}(q)$.

\section{Reductive prehomogeneous vector spaces}\label{sec:PVS}

Let $F$ be a field of characteristic zero with algebraic closure $\overline{F}$. Let $G$ be a connected reductive $F$-group.

\begin{definition}
	Let $Z$ be a homogeneous $G$-space. It is said to be \emph{spherical} (also known as absolutely spherical) if there exists an open $B$-orbit in $Z \dtimes{F} \overline{F}$, where $B \subset G \dtimes{F} \overline{F}$ is any Borel subgroup.
\end{definition}

Suppose that $G$ acts on the right of a finite-dimensional $F$-vector space $X$ through an algebraic homomorphism $\rho: G \to \GL(X)$. We say the triplet $(G, \rho, X)$ is a \emph{reductive prehomogeneous vector space} if there is a Zariski-open dense $G$-orbit in $X$, hereafter denoted as $X^+$. We also write $\partial X := X \smallsetminus X^+$.

The proofs of the facts below can be found in \cite{Ki03} when $F = \overline{F}$. The general case follows by Galois descent and an application of Hilbert's Theorem 90; see the discussions in \cite[\S 2.1]{Li19}.

A nonzero rational function $f \in F(X)$ is called a \emph{relative invariant} if there exists $\omega \in \mathbf{X}^*(G)$ such that $f(xg) = \omega(g) f(x)$ for all $(x, g) \in X \times G$. The unique character $\omega$ here is called the \emph{eigencharacter} associated with $f$; note that $\omega$ determines $f$ up to $F^\times$. Relative invariants are automatically homogeneous. By varying $f$, the eigencharacters form a subgroup $\mathbf{X}^*_\rho(G)$ of $\mathbf{X}^*(G)$. We have
\[ \mathbf{X}^*_\rho(G) = \mathbf{X}^*_{\rho \dotimes{F} \overline{F}}\left( G \dtimes{F} \overline{F} \right)^{\Gal(\overline{F}|F)}. \]

The general theory of prehomogeneous vector spaces affords us a set of eigencharacters $\omega_1, \ldots, \omega_r$ such that $\mathbf{X}^*_\rho(G) = \bigoplus_{i=1}^r \Z\omega_i$. To each $\omega_i$ is associated a relative invariant $f_i \in F[X]$, unique up to $F^\times$. We call $f_1, \ldots, f_r$ the \emph{basic relative invariants} of $(G, \rho, X)$: they define the codimension-one irreducible components of $\partial X$. In particular, $\prod_{i=1}^r f_i^{a_i} \in F[X]$ if and only if $a_i \geq 0$ for all $i$. Accordingly, we call $\omega_1, \ldots, \omega_r$ the \emph{basic eigencharacters} of $(G, \rho, X)$.

If $f \in F(X)$ is a relative invariant, then $f^{-1} \dd f$ defines a $G$-equivariant morphism $X^+ \to \check{X}$ between $F$-varieties. We say that $f$ is \emph{non-degenerate} if $f^{-1} \dd f$ is dominant. We say that $(G, \rho, X)$ is \emph{regular} if it admits a non-degenerate relative invariant.

Note that $\partial X$ is a hypersurface if and only if $X^+$ is affine (see \cite[Theorem 2.28]{Ki03}). To $(G, \rho, X)$ is associated the dual triplet $(G, \check{\rho}, \check{X})$, where $\check{\rho}$ is the contragredient of $\rho$.

\begin{proposition}\label{prop:PVS-generality}
	Let $(G, \rho, X)$ be a reductive prehomogeneous vector space. Assume that $X^+$ is affine. Then the following holds:
	\begin{compactitem}
		\item $(G, \rho, X)$ is regular: in fact, $(\det\rho)^2$ is the eigencharacter of some non-degenerate relative invariant;
		\item $(G, \check{\rho}, \check{X})$ is a regular prehomogeneous vector space as well;
		\item every non-degenerate relative invariant $f \in F(X)$ induces a $G$-equivariant isomorphism $f^{-1} \dd f: X^+ \rightiso \check{X}^+$;
		\item $\mathbf{X}^*_{\check{\rho}}(G) = \mathbf{X}^*_\rho(G)$ and $\omega_1^{-1}, \ldots, \omega_r^{-1}$ are the basic eigencharacters for $(G, \check{\rho}, \check{X})$;
		\item we may choose a non-degenerate relative invariant $f \in F[X]$ (resp.\ $\check{f} \in F[\check{X}]$) whose zero locus is $\partial X$ (resp.\ $\partial \check{X}$), such that $f$ and $\check{f}$ have opposite eigencharacters.
	\end{compactitem}
\end{proposition}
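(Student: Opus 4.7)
The strategy is to prove everything first over the algebraic closure $\overline F$, invoking the structure theory of reductive prehomogeneous vector spaces in \cite{Ki03}, and then descend to general $F$ of characteristic zero as sketched in \cite[\S 2.1]{Li19}. Over $\overline F$, the cited Theorem 2.28 of \cite{Ki03} supplies the starting point: $X^+$ affine forces $\partial X$ to be a hypersurface, whose irreducible components define the basic relative invariants $f_1,\dots,f_r$ (unique up to $\overline F^\times$) with eigencharacters $\omega_1,\dots,\omega_r$ freely generating $\mathbf{X}^*_\rho(G)$. Because the $f_i$ are permuted by $\Gal(\overline F|F)$ only up to scalars while $\mathbf{X}^*_\rho(G)$ is Galois-stable, Hilbert 90 provides $F$-rational representatives; this is the descent mechanism for the entire proposition.

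The technical core is the Hessian identity. For any relative invariant $f$ with eigencharacter $\omega$, a chain-rule calculation based on $f(x\rho(g)) = \omega(g) f(x)$ yields
\[ H(f)(x\rho(g)) \;=\; (\det\rho(g))^{-2}\, H(f)(x), \qquad H(f) := \det\!\Bigl(\partial^2 \log f/\partial x_i\partial x_j\Bigr), \]
so $H(f)$ is a rational relative invariant of eigencharacter $(\det\rho)^{-2}$; in particular $(\det\rho)^2 \in \mathbf{X}^*_\rho(G)$. Up to a monomial in $f$, the function $H(f)$ is the Jacobian determinant of $\phi_f := f^{-1}\dd f : X^+ \to \check X$, so $f$ is non-degenerate iff $H(f)\not\equiv 0$. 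The standard fact that a reductive PVS with $X^+$ affine is regular (see \cite{Ki03}) provides at least one non-degenerate $f$, and the classical b-function argument of Sato--Kimura then shows that $H(f)^{-1}$ — a nonzero rational relative invariant of eigencharacter $(\det\rho)^2$ — is again non-degenerate; this establishes the first bullet.

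For bullets 2--4, fix a non-degenerate $f$ and analyze $\phi_f$. Differentiating $f(x\rho(g))=\omega(g)f(x)$ shows $\phi_f(x\rho(g)) = \phi_f(x)\check\rho(g)$, so $\phi_f$ is $G$-equivariant for $\check\rho$; being dominant between smooth irreducible varieties of the same dimension, it is birational, and equivariance then forces its image to be a single dense, hence open, $G$-orbit. Thus $\check X$ is prehomogeneous with $\check X^+ = \phi_f(X^+)$ and $\phi_f : X^+ \overset{\sim}\to \check X^+$, giving bullets 2 and 3. The pullback of each basic $\check f_j$ along $\phi_f$ is a rational relative invariant on $X^+$; the degree-$(-1)$ homogeneity of $\phi_f$ combined with its $G$-equivariance inverts eigencharacters, identifying the basic eigencharacters of $\check X$ as $\omega_1^{-1},\dots,\omega_r^{-1}$, which is bullet 4. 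For bullet 5, I choose exponents $a_i \geq 1$ generically so that the b-function value $b(a_1,\dots,a_r) \neq 0$ (possible because $b$ is a nonzero polynomial, by regularity); then $f:=\prod_i f_i^{a_i}\in F[X]$ is a non-degenerate polynomial relative invariant with zero locus $\partial X$, and the analogous $\check f:=\prod_j \check f_j^{a_j}$ has opposite eigencharacter by bullet 4. The main obstacle is the Hessian/b-function bookkeeping needed to guarantee that the rational relative invariants one constructs remain non-degenerate with the stated zero loci; all of this is already encapsulated in the classical theory of \cite{Ki03}, so my plan amounts to citing those facts and tracking them through Galois descent.
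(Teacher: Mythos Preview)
Your proposal is correct and follows the same strategy as the paper: both reduce to the structure theory in \cite{Ki03} over $\overline{F}$ and descend via Hilbert 90 as in \cite[\S 2.1]{Li19}. The paper's own proof is in fact a bare citation of these two references, so your write-up is a faithful expansion of what is being invoked there; in particular, the Hessian identity and the equivariance of $\phi_f = f^{-1}\dd f$ are exactly the mechanisms underlying the cited results.

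One minor comment on the last bullet: invoking the $b$-function to select exponents $a_i \geq 1$ with $\prod_i f_i^{a_i}$ non-degenerate is a detour. The more direct argument---which the paper itself uses later in Lemma~\ref{prop:regularity}---is that the Jacobian of $\sum_i a_i \phi_{f_i}$ at a fixed generic point is a polynomial in $(a_1,\ldots,a_r)$, nonzero by regularity, hence nonvanishing for generic integer tuples with $a_i \geq 1$. Intersecting with the analogous open condition on the $\check X$ side gives exponents that work simultaneously. This avoids any bookkeeping about what exactly ``$b(a_1,\ldots,a_r) \neq 0$'' buys you.
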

\begin{proof}
	These properties are proved in \cite[\S 2.1]{Li19}, under the tacit assumptions that $X^+$ is spherical and $F = \R$. The arguments therein carry over to the general case. For instance, regularity follows immediately from \cite[Proposition 2.24]{Ki03} as $X^+$ is affine.
\end{proof}

\begin{remark}
	For non-reductive $G$, one can still define the prehomogeneous vector spaces $(G, \rho, X)$. The notions of relative invariants and non-degeneracy also carry over to the general case. See \cite{Ki03}.
\end{remark}

Hereafter, we assume that $F$ is a local field. To each $F$-analytic manifold $Y$ and $s \in \R$, we may define the line bundle $\mathcal{L}^s$ of \emph{$s$-densities}: an $s$-density can be thought as an $s$-th power of a volume form on $Y$. The integration $\int_Y \eta$ of a $1$-density $\eta$ on $Y$ makes sense, provides that it converges. An $s$-density and a $t$-density can be naturally multiplied to yield an $(s+t)$-density. The $\frac{1}{2}$-densities are also called \emph{half-densities}. Therefore, one can talk about square-integrable half-densities, which form the Hilbert space $L^2(Y)$.

If $G(F)$ acts on $Y$, the density bundles carry natural $G(F)$-equivariant structures. Specializing to the situation of Proposition \ref{prop:PVS-generality}, every $\Omega \in \topwedge \check{X}$ gives rise to the translation-invariant $s$-density $|\Omega|^s$; its restriction to $X^+(F)$ is still an $s$-density. If $\Omega \neq 0$, then $|\Omega|$ gives a Haar measure on $X(F)$.

\begin{proposition}\label{prop:L-trivializable}
	Under the assumptions of Proposition \ref{prop:PVS-generality}, the line bundle $\mathcal{L}^s$ on $X(F)$ is equivariantly trivializable for any $s \in \R$. Specifically, let $\phi \in F(X)$ be a relative invariant with eigencharacter $(\det\rho)^2$ and $\Omega \in \topwedge \check{X} \smallsetminus \{0\}$, then $|\phi|^{-s/2} |\Omega|^s$ is a $G(F)$-invariant and non- vanishing section of $\mathcal{L}^s$ over $X^+(F)$.
\end{proposition}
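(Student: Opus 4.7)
The plan is a direct computation: build the claimed section by hand and check $G(F)$-invariance by pitting the transformation law of $|\Omega|^s$ against that of $|\phi|^{-s/2}$.

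First I would recall the functoriality of $s$-densities: under a diffeomorphism $\varphi$ of an $F$-analytic manifold $Y$, pulling back an $s$-density multiplies it by $|\mathrm{Jac}(\varphi)|^s$. Applied to a finite-dimensional $F$-vector space $X$ and a nonzero $\Omega \in \topwedge \check{X}$, the translation-invariant $s$-density $|\Omega|^s$ is acted on by $R_g: x \mapsto x\rho(g)$ through $R_g^* |\Omega|^s = |\det\rho(g)|^s |\Omega|^s$. With the convention that $G(F)$ acts on densities by $g \cdot \eta = R_g^* \eta$, this gives $g \cdot |\Omega|^s = |\det\rho(g)|^s \cdot |\Omega|^s$ on $X(F)$.

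Next, existence of the function $\phi$: by Proposition \ref{prop:PVS-generality}, since $X^+$ is affine, $(G, \rho, X)$ is regular and $(\det\rho)^2$ occurs as the eigencharacter of a non-degenerate relative invariant $\phi \in F(X)$. Being a relative invariant, $\phi$ does not vanish on $X^+$: its zero locus is $G$-stable, and $X^+$ is a single orbit of $G\dtimes{F}\overline{F}$, so were $\phi$ to vanish at one point of $X^+$ it would vanish on all of $X^+$ and hence be identically zero. Thus $|\phi|^{-s/2}$ is a well-defined, non-vanishing smooth function on $X^+(F)$, with transformation law
\[
(g \cdot |\phi|^{-s/2})(x) = |\phi(xg)|^{-s/2} = |\det\rho(g)|^{-s} \cdot |\phi(x)|^{-s/2}.
\]

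Combining the two computations, the factors $|\det\rho(g)|^{\pm s}$ cancel, so $|\phi|^{-s/2}|\Omega|^s$ is $G(F)$-invariant as a section of $\mathcal{L}^s$ over $X^+(F)$, and it is non-vanishing since both factors are. Pulling back along any $G(F)$-equivariant trivialization on a single orbit then propagates the trivialization; over $X^+(F)$ (which need not be a single $G(F)$-orbit, but is a union of open ones), the explicit section $|\phi|^{-s/2}|\Omega|^s$ yields the trivialization directly, and this is the content of the second assertion. No step is truly hard; the only point one must be careful about is keeping the sign/exponent conventions straight between the transformation of $|\Omega|^s$ under $R_g$ and the fact that $\phi$ transforms by $(\det\rho)^2$ rather than by $\det\rho$, which is exactly why the exponent $-s/2$ (not $-s$) on $|\phi|$ is the correct one.
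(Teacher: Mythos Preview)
Your argument is correct and is precisely the expected direct verification; the paper itself does not spell it out but simply cites \cite[Lemma 6.6.1]{LiLNM}, where the same computation is carried out. One cosmetic remark: the last paragraph about ``propagating the trivialization along orbits'' is unnecessary, since the explicit non-vanishing invariant section $|\phi|^{-s/2}|\Omega|^s$ already \emph{is} the equivariant trivialization over $X^+(F)$.
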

\begin{proof}
	See \cite[Lemma 6.6.1]{LiLNM}.
\end{proof}

\begin{definition}
	For any finite-dimensional $F$-vector space $X$, we denote by $\Schw_0(X)$ the space of scalar-valued Schwartz--Bruhat functions on $X(F)$, and by $\Schw(X) = \Schw_0(X) |\Omega|^{1/2}$ the space of Schwartz--Bruhat half-densities on $X(F)$. Here $\Omega \in \topwedge \check{X} \smallsetminus \{0\}$ is arbitrary.
\end{definition}

Given a representation $\rho: G \to \GL(X)$, we deduce left $G(F)$-actions on $\Schw_0(X)$ and on $\Schw(X)$. Note that $G(F)$ dilates $|\Omega|^{1/2}$.

Fix an additive character $\psi$ of $F$ and let $\lrangle{\cdot, \cdot}: \check{X} \times X \to F$ be the canonical pairing, which induces a pairing between $\topwedge \check{X}$ and $\topwedge X$. Given $\Omega$ as above, the usual Fourier transform is
\[\begin{tikzcd}[row sep=tiny]
	\mathcal{F}_{\psi, |\Omega|}: \Schw_0(X) \arrow[r] & \Schw_0(\check{X}) \\
	\xi_0 \arrow[mapsto, r] & {\left[ \check{x} \mapsto \displaystyle\int_{x \in X(F)} \xi_0(x) \psi(\lrangle{\check{x}, x}) |\Omega| \right]}.
\end{tikzcd}\]

The following easy fact explains the usefulness of half-densities. See \cite[\S 6.1]{LiLNM} or \cite[\S 2.3]{Li19} (where we assumed $F = \R$) for further explanations.

\begin{definition-proposition}
	Let $X$ be a finite-dimensional $F$-vector space. Choose any $\Omega \in \topwedge \check{X} \smallsetminus \{0\}$ and take the unique $\Psi \in \topwedge X$ with $\lrangle{\Phi, \Psi}=1$. Define the Fourier transform of half-densities as
	\[\begin{tikzcd}[row sep=tiny]
		\mathcal{F}_\psi: \Schw(X) \arrow[r] & \Schw(\check{X}) \\
		\xi = \xi_0 |\Omega|^{1/2} \arrow[mapsto, r] & \mathcal{F}_{\psi, |\Omega|}(\xi_0) |\Psi|^{1/2}.
	\end{tikzcd}\]
	This is independent of the choice of $\Omega$ and yields a $G(F)$-equivariant isomorphism $\Schw(X) \rightiso \Schw(\check{X})$. It extends to $L^2(X) \rightiso L^2(\check{X})$.
\end{definition-proposition}

\begin{remark}
	There is a slightly different, ``self-dual'' normalization $\mathcal{F}_\psi^{\mathrm{sd}} := A(\psi)^{-1/2} \mathcal{F}_\psi$ in \cite[Remark 2.11]{Li19}; it satisfies $\mathcal{F}_{-\psi}^{\mathrm{sd}} \mathcal{F}_\psi^{\mathrm{sd}} = \identity_{\Schw(X)}$ and extends to an isometry $L^2(X) \rightiso L^2(\check{X})$. We refer to \textit{loc.\ cit.} for details.
\end{remark}

We will also need the following decomposition. Consider a triplet $(G, \rho, X)$ with $X^+$ affine as before, over $F = \R$. Choose the basic relative invariants $f_1, \ldots, f_r \in \R[X]$. Let $A_G \subset G$ be the maximal split central torus, and let $A_G(\R)^\circ$ be the identity connected component of $A_G(\R)$. Let $H_G: G(\R) \to \mathfrak{a}_G := \Hom(\mathbf{X}^*(G), \R)$ be the Harish-Chandra homomorphism. Set $G(\R)^1 := \Ker(H_G)$. It is well-known that $H_G: A_G(\R)^\circ \rightiso \mathfrak{a}_G$, and multiplication yields an isomorphism of real Lie groups
\[ A_G(\R)^\circ \times G(\R)^1 \rightiso G(\R). \]
We define
\begin{align*}
	G(\R)_\rho & := \left\{ g \in G(\R) : \forall \chi \in \mathbf{X}^*_\rho(G), \; |\chi(g)| = 1 \right\}, \\
	X^+(\R)_\rho & := \left\{ x \in X^+(\R) : \forall 1 \leq i \leq r, \; |f_i(x)|=1 \right\}.
\end{align*}
Then $G(\R)_\rho \supset G(\R)^1$ and $G(\R)_\rho$ acts on the right of $X^+(\R)_\rho$.

Observe that $\mathfrak{a}_G \twoheadrightarrow \mathfrak{a}_\rho := \Hom(\mathbf{X}^*_\rho(G), \R)$. Choose a splitting to realize $\mathfrak{a}_\rho$ as a direct summand of $\mathfrak{a}_G$, and set $A_\rho := H_G^{-1}(\mathfrak{a}_\rho) \subset A_G(\R)^\circ$. Therefore $(|\omega_1|, \ldots, |\omega_r|)$ induces $A_\rho \rightiso (\R^\times_{>0})^r$.

Finally, define $r: X^+(\R) \to A_\rho$ as the map characterized by $|f_i(y)| = |\omega_i(r(y))|$ for $i = 1, \ldots, r$.

\begin{proposition}[{\cite[Proposition 6.1]{Li19}}]\label{prop:rho-decomposition}
	There are isomorphisms of real analytic manifolds
	\[\begin{tikzcd}[row sep=tiny]
		G(\R)_\rho \times A_\rho \arrow[r, "\sim"] & G(\R) \\
		(g, a) \arrow[mapsto, r] & ga
	\end{tikzcd}\]
	and
	\[\begin{tikzcd}[row sep=tiny]
		X^+(\R)_\rho \times A_\rho \arrow[r, "\sim"] & X^+(\R) \\
		(x, a) \arrow[mapsto, r] & xa \\
		\left( y r(y)^{-1}, r(y) \right) & y \arrow[mapsto, l] .
	\end{tikzcd}\]
\end{proposition}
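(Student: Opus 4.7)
The plan is to bootstrap both statements from the Harish--Chandra decomposition $A_G(\R)^\circ \times G(\R)^1 \rightiso G(\R)$ by splitting $A_G(\R)^\circ$ according to the surjection $\mathfrak{a}_G \twoheadrightarrow \mathfrak{a}_\rho$. Concretely, I would fix a complement $\mathfrak{a}_\rho^{\perp}$ in $\mathfrak{a}_G$ to the chosen splitting and set $A' := H_G^{-1}(\mathfrak{a}_\rho^{\perp}) \subset A_G(\R)^\circ$, so that multiplication yields a real-analytic isomorphism $A_\rho \times A' \rightiso A_G(\R)^\circ$. Together with the Harish--Chandra decomposition this gives $G(\R) \cong A_\rho \times A' \times G(\R)^1$ as real Lie groups.

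For the group statement, the key observation is that an element $a \in A_G(\R)^\circ$ lies in $G(\R)_\rho$ exactly when $\chi(H_G(a)) = \log|\chi(a)| = 0$ for every $\chi \in \mathbf{X}^*_\rho(G)$, i.e.\ when $H_G(a) \in \mathfrak{a}_\rho^{\perp}$; equivalently $a \in A'$. Since $G(\R)^1 \subset G(\R)_\rho$ trivially, I obtain $G(\R)_\rho = A' \cdot G(\R)^1$, and combining with the decomposition above, $G(\R) = A_\rho \cdot G(\R)_\rho$. Direct-product-ness and bicontinuity of $(g,a) \mapsto ga$ follow from the Harish--Chandra splitting.

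For the space statement, I would verify directly that the two assignments are mutually inverse real-analytic maps. The map $r: X^+(\R) \to A_\rho$ is well-defined and smooth because $(|\omega_1|, \ldots, |\omega_r|): A_\rho \rightiso (\R^\times_{>0})^r$ and $(|f_1(y)|, \ldots, |f_r(y)|)$ takes values in $(\R^\times_{>0})^r$ on $X^+(\R)$, with $f_i$ real-analytic and nowhere vanishing there. Using the relation $f_i(xa) = \omega_i(a) f_i(x)$, the computation $|f_i(y r(y)^{-1})| = |\omega_i(r(y))|^{-1} |f_i(y)| = 1$ shows that $y r(y)^{-1} \in X^+(\R)_\rho$; conversely, for $(x, a) \in X^+(\R)_\rho \times A_\rho$ one has $|f_i(xa)| = |\omega_i(a)|$, so $r(xa) = a$ and $(xa) \cdot r(xa)^{-1} = x$.

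I do not anticipate a genuinely hard step: the only mild subtlety is the well-definedness of $r$ and its smoothness, but this is immediate from the isomorphism $A_\rho \rightiso (\R^\times_{>0})^r$ and the fact that $X^+ \subset X \smallsetminus \bigcup_i \{f_i = 0\}$, so the $|f_i|$ are strictly positive analytic functions on $X^+(\R)$. The rest is bookkeeping with the Harish--Chandra homomorphism.
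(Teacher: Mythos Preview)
Your proposal is correct and follows precisely the natural route: decompose $A_G(\R)^\circ$ along the splitting $\mathfrak{a}_G = \mathfrak{a}_\rho \oplus \ker(\mathfrak{a}_G \twoheadrightarrow \mathfrak{a}_\rho)$, identify $G(\R)_\rho$ with $A' \cdot G(\R)^1$, and check the inverse map on $X^+(\R)$ via the transformation rule $f_i(xa) = \omega_i(a) f_i(x)$. The paper itself gives no independent argument here, referring to \cite[Proposition 6.1]{Li19} and noting that only basic properties of reductive prehomogeneous vector spaces are used --- exactly what your argument does.
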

\begin{proof}
	The proof is identical to that in \textit{loc.\ cit.}, since it uses only the basic properties of reductive prehomogeneous vector spaces; in particular, $X^+$ does not have to be spherical.
\end{proof}

\section{Generalized zeta integrals}\label{sec:gen-zeta}
In this section, $F$ is a local field of characteristic zero. Let $G$ be a connected reductive $F$-group as before, we will consider the representations of the locally compact group $G(F)$ on $\CC$-vector spaces, assumed to be continuous in the Archimedean case. For representations $\pi_1$ and $\pi_2$ of $G(F)$, the space $\Hom_{G(F)}(\pi_1, \pi_2)$ consists of $G(F)$-equivariant linear maps from $V_{\pi_1}$ to $V_{\pi_2}$, assumed to be continuous when $F$ is Archimedean.

By an \emph{admissible representation} of $G(F)$, we shall mean:
\begin{itemize}
	\item a smooth admissible representation of $G(F)$ of finite length, when $F$ is non-Archimedean;
	\item an SAF representations of $G(F)$ (smooth, admissible of moderate growth, Fréchet --- see \cite{BK14}) of finite length, when $F$ is Archimedean.
\end{itemize}

Let $X^+$ be a homogeneous $G$-space. Following \cite[\S 4.1]{LiLNM}, we make the
\begin{definition}
	Let $C^\infty(X^+) = C^\infty\left( X^+ ; \mathcal{L}^{1/2} \right)$ be the space of $C^\infty$-half-densities on $X^+(F)$, which is a smooth representation under the obvious left $G(F)$-action. More precisely, it is the smooth $G(F)$-representation associated to $C\left( X^+ ; \mathcal{L}^{1/2}\right)$ by taking smooth vectors. For Archimedean $F$ it is a smooth Fréchet representation.
	
	For every admissible representation $\pi$ of $G(F)$, we define
	\[ \mathcal{N}_\pi(X^+) := \Hom_{G(F)}\left(\pi, C^\infty(X^+) \right). \]
	For every $\eta \in \mathcal{N}_\pi(X^+)$ and $v \in V_\pi$, the $\mathcal{L}^{1/2}$-valued function $\eta(v)$ is called a \emph{generalized matrix coefficient} of $\pi$.
\end{definition}

Note that when $\mathcal{L}^{1/2}$ is equivariantly trivializable on $X^+(F)$, such as the case Proposition \ref{prop:L-trivializable}, $C^\infty(X^+)$ is isomorphic to the usual scalar-valued $C^\infty$ space on $X^+(F)$. In that case, $\mathcal{N}_\pi(X^+)$ is the familiar object studied in \emph{relative harmonic analysis}, at least for irreducible $\pi$. Cf.\ Remark \ref{rem:eta-0}.

Following \cite{LiLNM, Li19}, the following conditions on $X^+$ will be imposed in \S\ref{sec:spherical-case}.

\begin{hypothesis}\label{hyp:sphericity}
	We assume that either
	\begin{compactitem}
		\item $F$ is Archimedean and $X^+$ is a spherical homogeneous $G$-space,
		\item $F$ is non-Archimedean, $G$ is split and $X^+$ is a wavefront spherical homogeneous $G$-space in the sense of \cite[p.23]{SV17},
		\item or $F$ is non-Archimedean and $X^+$ is a symmetric space (hence wavefront spherical) under $G$.
	\end{compactitem}
\end{hypothesis}

The conditions in the non-Archimedean case are inherited from \cite{SV17}.

Hereafter, $(G, \rho, X)$ will be a reductive prehomogeneous vector space such that $X^+$ is affine. By Proposition \ref{prop:PVS-generality}, $(G, \check{\rho}, \check{X})$ satisfies the same requirements. Denote by $\omega_1, \ldots, \omega_r$ the basic eigencharacters for $(G, \rho, X)$. Let us state the finiteness of multiplicities as follows.

\begin{theorem}\label{prop:finite-mult}
	Under the Hypothesis \ref{hyp:sphericity}, we have $\dim_{\CC} \mathcal{N}_\pi(X^+) < +\infty$ for every admissible representation $\pi$ of $G(F)$.
\end{theorem}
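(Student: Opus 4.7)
The plan is to reduce the statement to a known finite-multiplicity theorem on spherical homogeneous spaces, which can then be invoked as a black box.

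The first step is to dispose of the half-density bundle. Since $X^+$ is affine, Proposition~\ref{prop:L-trivializable} provides a $G(F)$-equivariant trivialization of $\mathcal{L}^{1/2}$ over $X^+(F)$: pick a relative invariant $\phi$ with eigencharacter $(\det\rho)^2$ and any $\Omega \in \topwedge \check{X} \smallsetminus \{0\}$; then $|\phi|^{-1/4}|\Omega|^{1/2}$ is a nowhere-vanishing $G(F)$-invariant section, so multiplication by it identifies the scalar-valued smooth $G(F)$-module $C^\infty(X^+(F))$ with $C^\infty(X^+) = C^\infty(X^+;\mathcal{L}^{1/2})$. Consequently,
\[
\mathcal{N}_\pi(X^+) \;\cong\; \Hom_{G(F)}\bigl(\pi,\, C^\infty(X^+(F))\bigr)
\]
as $\CC$-vector spaces, which is the familiar multiplicity space in relative harmonic analysis.

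The second step is to quote the appropriate finite-multiplicity theorem in the reduced scalar-valued setting. When $F$ is Archimedean and $X^+$ is spherical, finite-dimensionality of the right-hand side for admissible $\pi$ is a theorem of Kobayashi--Oshima (later extended by Krötz--Schlichtkrull to real spherical spaces): its standard proof exploits the holonomicity of the $\mathcal{D}$-module generated by a generalized matrix coefficient, together with the Casselman--Wallach equivalence to pass between $(\mathfrak{g},K)$-modules and SAF representations. When $F$ is non-Archimedean, the two alternative hypotheses --- $G$ split with $X^+$ wavefront spherical, or $X^+$ symmetric --- are precisely those under which \cite{SV17} establishes finite multiplicities, via the Bernstein maps and the analysis of boundary degenerations arising from toroidal compactifications of $X^+$.

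Thus the only genuine content of the proof is the trivialization of $\mathcal{L}^{1/2}$ recorded in the first step, which makes Hypothesis~\ref{hyp:sphericity} sufficient; the main obstacle, were one to attempt a self-contained proof, is the finiteness itself, which is genuinely deep and lies outside the scope of this survey.
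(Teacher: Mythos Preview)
Your proposal is correct and matches the paper's approach essentially verbatim: the paper also treats this theorem as a black-box citation, referring to \cite{KO13} and \cite[Theorem 3.2]{Li19} for the Archimedean case and to \cite[Theorem 5.1.5]{SV17} for the non-Archimedean case, after having already remarked (just before the theorem statement) that the equivariant trivialization of $\mathcal{L}^{1/2}$ from Proposition~\ref{prop:L-trivializable} identifies $\mathcal{N}_\pi(X^+)$ with the usual scalar-valued multiplicity space. Your only addition is the informal sketch of how those cited theorems are proved, which the paper omits.
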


This crucial fact is proved by various authors. For complete references, we refer to \cite[Theorem 3.2]{Li19} or \cite{KO13} for the Archimedean case, and to \cite[Theorem 5.1.5]{SV17} for the non-Archimedean case.

Given $(G, \rho, X)$, we set $\Lambda_A := \mathbf{X}^*_\rho(G) \dotimes{\Z} A$ for any commutative ring $A$. For $\lambda = \sum_{i=1}^r \omega_i \otimes \lambda_i \in \Lambda_{\R}$, we write $\lambda \relgg{X} 0$ if $\lambda_i \gg 0$ for each $1 \leq i \leq r$. Similarly for $\lambda \relgeq{X} 0$, etc. For $\lambda = \sum_{i=1}^r \omega_i \otimes \lambda_i \in \Lambda_{\CC}$, we write
\begin{equation}\label{eqn:omega-lambda}\begin{aligned}
	|\omega|^\lambda & := \prod_{i=1}^r |\omega_i|^{\lambda_i}: G(F) \to \CC^\times, \\
	|f|^\lambda & := \prod_{i=1}^r |f_i|^{\lambda_i}: X(F) \to \CC,
\end{aligned}\end{equation}
where $f_1, \ldots, f_r$ are chosen basic relative invariants, so that $|f|^\lambda$ has eigencharacter $|\omega|^\lambda$ under $G(F)$-action.

\begin{definition}[Generalized zeta integrals]\label{def:zeta}
	For $(G, \rho, X)$ as above, let $\pi$ be an admissible representation of $G(F)$ and let $\eta \in \mathcal{N}_\pi(X^+)$. For all $v \in V_\pi$, $\xi \in \Schw(X)$ and $\lambda \in \Lambda_{\CC}$ with $\Re(\lambda) \relgg{X} 0$, we set
	\[ Z_\lambda\left(\eta, v, \xi \right) := \int_{X^+(F)} \eta(v) |f|^\lambda \xi, \]
	granting the convergence of this integral.
\end{definition}

As $\eta(v) |f|^\lambda \xi$ is a $1$-density on $X^+(F)$, one can talk about its integral. The issue of convergence will be discussed in \S\ref{sec:spherical-case} (under Hypothesis \ref{hyp:sphericity}) and \S\ref{sec:beyond-spherical}. Observe that the convergence is trivial when $\xi$ is compactly supported on $X^+(F)$.

\begin{remark}
	Consider the group of characters $\mathcal{T} := \left\{ |\omega|^\lambda: \lambda \in \Lambda_{\CC} \right\}$ of $G(F)$. When $F$ is Archimedean, $\mathcal{T}$ is isomorphic to $\Lambda_{\CC}$ by $|\omega|^\lambda \mapsto \lambda$. When $F$ is non-Archimedean with residue field $\F_q$, it is naturally isomorphic to $\left( \CC^\times \right)^r$, by mapping $|\omega|^{\sum_i \omega_i \otimes \lambda_i}$ to $(q^{\lambda_1,}, \ldots, q^{\lambda_r})$.

	Therefore, assuming $\mathcal{N}_\pi(X^+)$ is finite-dimensional, it makes sense to talk about meromorphic (resp.\ rational) families in $\mathcal{N}_\pi(X^+)$ indexed by $\mathcal{T}$, when $F$ is Archimedean (resp.\ non-Archimedean).
	
	We will talk about the meromorphy or rationality of zeta integrals $Z_\lambda(\eta, v, \xi)$ in this sense, when $\lambda \in \Lambda_{\CC}$ (or rather $|\omega|^\lambda \in \mathcal{T}$) varies. This is clearly unaffected by the choice of basic relative invariants $f_1, \ldots, f_r$.
\end{remark}

\begin{remark}\label{rem:eta-0}
	One can get rid of half-densities by taking $\phi$ and $\Omega$ as in Proposition \ref{prop:L-trivializable}. Then every $\eta \in \mathcal{N}_\pi(X^+)$ can be written as
	\[ \eta = \eta_0 |\phi|^{-1/4} |\Omega| \]
	so that $\eta_0 \in \Hom_{G(F)}(\pi, C^\infty(X^+(F); \CC))$, i.e.\ it yields scalar-valued generalized matrix coefficients. Let us rescale $\phi$ to ensure $|\phi|^{1/4} = |f|^{\lambda_0}$ with $\lambda_0 \in \frac{1}{4} \Lambda_{\Z}$. Writing $\xi \in \Schw(X)$ as $\xi = \xi_0 |\Omega|^{1/2}$, we have
	\[ Z_\lambda(\eta, v, \xi) = \int_{X^+(F)} \eta_0(v) |f|^{\lambda - \lambda_0} \xi_0 |\Omega|, \quad \xi_0 \in \Schw_0(X). \]
\end{remark}

\section{Basic properties in the spherical case}\label{sec:spherical-case}

Let $F$ be a local field of characteristic, $G$ be a connected reductive $F$-group, and $(G, \rho, X)$ be a reductive prehomogeneous vector space such that $X^+$ is affine. Choose the basic relative invariants $f_1, \ldots, f_r \in F[X]$ with eigencharacters $\omega_1, \ldots, \omega_r \in \mathbf{X}^*_\rho(X)$.

The following statements are simplified forms of the main results from \cite[Chapter 6]{LiLNM} (non-Archimedean case) and \cite{Li19} (Archimedean case, reducing to $F = \R$); we omit the issues about the separate continuity of $Z_\lambda(\eta, \cdot, \cdot)$, the ``denominator'' of $Z_\lambda$ for Archimedean $F$, and the properties of $\gamma$-factors. The proofs thereof will be reviewed when we extend them to certain non-spherical cases.

\begin{theorem}[see {\cite[Theorem 6.2.7]{LiLNM}} and {\cite[Theorems 3.10, 3.12]{Li19}}]\label{prop:convergence-meromorphy}
	Let $\pi$ be an admissible representation of $G(F)$. Under the Hypothesis \ref{hyp:sphericity}, there exists $\kappa = \kappa(\pi) \in \Lambda_{\R}$, depending solely on $(G, \rho, X)$ and $\pi$, such that $Z_\lambda(\eta, v, \xi)$ is defined by a convergent integral for all $\eta, v, \xi$ whenever $\Re(\lambda) \relgeq{X} \kappa$, and $Z_\lambda(\eta, v, \xi)$ is holomorphic in $\lambda$ in the interior of that region.
	
	In this case, the function $\lambda \mapsto Z_\lambda(\eta, v, \xi)$ extends to a meromorphic family indexed by $\mathcal{T} = \left\{ |\omega|^\lambda : \lambda \in \Lambda_{\CC} \right\}$, for each $(\eta, v, \xi)$. It is a rational family when $F$ is non-Archimedean.
\end{theorem}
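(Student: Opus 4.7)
The plan is to reduce both statements to standard machinery in spherical harmonic analysis combined with a polar-type decomposition of $X^+(F)$. For the convergence, I would use Proposition \ref{prop:rho-decomposition} in the Archimedean case to write $X^+(\R) \cong X^+(\R)_\rho \times A_\rho$: on this product $|f|^\lambda$ becomes a pure character of $A_\rho \cong (\R^\times_{>0})^r$, the Schwartz half-density $\xi$ retains its rapid decay as any $|f_i| \to \infty$, and on the complementary factor $X^+(\R)_\rho$ the generalized matrix coefficient $\eta(v)$ can be controlled locally uniformly in $v$. In the non-Archimedean case, Hypothesis \ref{hyp:sphericity} supplies an analogous Cartan-type decomposition adapted to a maximal split torus acting on $X^+$, playing the same role.

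Under Hypothesis \ref{hyp:sphericity}, sphericity of $X^+$ forces controlled asymptotic behaviour of $\eta(v)$ along the $A_\rho$-direction, with exponents depending only on the data of $\pi$ (Harish--Chandra-type bounds in the Archimedean case, and the estimates of \cite{SV17} in the non-Archimedean case). Integrating the decomposition over the complementary factor first, the problem reduces to an $r$-fold integral on $A_\rho$ essentially of the form $\int t^{\Re(\lambda) - \kappa} \cdot (\text{bounded Schwartz tail})$, which converges once $\Re(\lambda) \relgeq{X} \kappa$ for some $\kappa = \kappa(\pi) \in \Lambda_{\R}$, uniformly in $\eta, v, \xi$. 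Holomorphy in the interior of the convergence region is then automatic by dominated convergence.

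For the meromorphic continuation in the non-Archimedean case, I would follow Igusa-style arguments: pull $Z_\lambda$ back to a toroidal resolution of $\prod_i f_i$ compatible with the sphericity of $X^+$, and expand the result into a finite sum of monomial integrals, each of which is manifestly rational in $q^{-\lambda_1}, \ldots, q^{-\lambda_r}$ with explicit linear denominators. For the Archimedean continuation, I would use Bernstein--Sato-type operators attached to the basic relative invariants $f_i$, adapted to the half-density bundle: one constructs algebraic differential operators $P_i(\lambda)$ and non-zero polynomials $b_i(\lambda_i)$ such that transposing $P_i$ onto $\xi$ shifts $\lambda_i \mapsto \lambda_i - 1$ up to the factor $b_i(\lambda_i)$. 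Iterating takes $\Re(\lambda)$ arbitrarily deep and yields the meromorphic continuation.

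The main obstacle I expect is the Archimedean continuation. Producing the shift operators $P_i(\lambda)$ alone is already delicate, since one needs the spectrum of $\eta(v)$ under $G$-invariant differential operators on $X^+$ to be sufficiently constrained; this rests on the holonomicity of $K$-finite generalized matrix coefficients on the spherical space $X^+$, combined with Casselman--Wallach to lift to the full SAF representation $\pi$. Equally important is uniformity of the estimates jointly in $\lambda$, $v \in V_\pi$, and $\xi \in \Schw(X)$, since meromorphy of a family of separately continuous functionals in the Archimedean case is much more than a pointwise statement. It is precisely at this point that sphericity of $X^+$ is indispensable, and also where the non-spherical generalization in later sections will require genuinely new ingredients.
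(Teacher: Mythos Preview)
Your proposal is broadly on target and identifies the correct ingredients, but there are a few divergences from the paper's actual strategy worth flagging, and one soft gap.

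\textbf{Archimedean convergence.} You invoke the polar decomposition of Proposition~\ref{prop:rho-decomposition} to separate the $A_\rho$-direction from $X^+(\R)_\rho$. In the paper (see the proof of Theorem~\ref{prop:convergence-meromorphy-beyond} specialized to $N=\{1\}$, which reviews the argument of \cite[\S 4.1]{Li19}), convergence is instead obtained from a single global estimate: there exist a continuous semi-norm $q$ on $V_\pi$ and a Nash function $p$ on $X^+(\R)$ with $|\eta_0(v)(x)| \le q(v)\,p(x)$. This bound, combined with the Schwartz decay of $\xi_0$, gives convergence directly without any product decomposition. The polar decomposition \emph{is} used in the paper, but for the functional equation, not for convergence. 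Your approach is not wrong in principle, but the sentence ``on the complementary factor $X^+(\R)_\rho$ the generalized matrix coefficient $\eta(v)$ can be controlled locally uniformly in $v$'' hides the real work: $X^+(\R)_\rho$ is non-compact, and $\eta(v)$ can and does grow there (think of matrix coefficients on $\{|\det|=1\} \subset \GL_n(\R)$). You still need the Nash-type growth bound on that slice, so the paper's route is more honest about where the estimate lives.

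\textbf{Archimedean continuation.} You propose Bernstein--Sato shift operators $P_i(\lambda)$ satisfying a functional equation for $|f_i|^{\lambda_i}$. The paper's route (again visible in \S\ref{sec:pf-arch}) is more structural: one shows directly that for $K$-finite $v$, the function $\eta_0(v)$ generates a holonomic $\mathscr{D}_{X^+_{\CC}}$-module, and then meromorphic continuation of the integral follows from general $\mathscr{D}$-module machinery. You correctly identify holonomicity as the key input, but packaging it as explicit $b$-function shifts is delicate here: the operators $P_i$ would have to interact correctly with $\eta_0(v)$, not just with $|f|^\lambda$, and constructing them uniformly in $v$ (then passing from $K$-finite to all of $V_\pi$ via Casselman--Wallach) is exactly the subtlety the holonomicity argument absorbs.

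\textbf{Non-Archimedean case.} Your Igusa/toroidal-resolution sketch matches the paper's approach (see \S\ref{sec:pf-nonarch}). The one ingredient you underemphasize is that $\eta_0(v)$ is not locally constant, so the ``monomial integrals'' after resolution are not quite the classical Igusa integrals; the paper handles this by showing $\theta = p^*\eta_0(v)$ is $D$-finite in the sense of \cite[p.~85]{SV17}, via the smooth asymptotics of \cite[Corollary 5.1.8]{SV17}. Your phrase ``compatible with the sphericity of $X^+$'' gestures at this, but it is the crux.
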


In order to state the functional equation, we fix an additive character $\psi$ for $F$ to define the Fourier transform $\mathcal{F}_\psi: \Schw(X) \to \Schw(\check{X})$ for half-densities. We also choose the basic relative invariants $\check{f}_1, \ldots, \check{f}_r \in F[\check{X}]$ with eigencharacters $\omega_1^{-1}, \ldots, \omega_r^{-1}$. The corresponding zeta integral is denoted as $\check{Z}_\lambda(\cdots)$. In this case, $X^+ \simeq \check{X}^+$ as homogeneous $G$-spaces, and it makes sense to talk about meromorphic or rational families of linear maps $\mathcal{N}_\pi(\check{X}^+) \to \mathcal{N}_\pi(X^+)$ indexed by $\mathcal{T}$, when $\dim_{\CC} \mathcal{N}_\pi(X^+) < + \infty$.

We need extra assumptions to prove the functional equation in the non-Archimedean case. The following is taken from \cite[Hypothesis 6.3.2]{LiLNM}. Note that even when $\pi = \mathbf{1}$, i.e.\ for Sato's prehomogeneous local zeta integrals, a similar condition has been imposed in \cite[p.474 (A.2)]{Sa89} for their functional equations.

\begin{hypothesis}\label{hyp:LFE}
	When $F$ is non-Archimedean, we assume that for every $y \in (\partial X)(F)$ with stabilizer $H := \Stab_G(y)$, there exists a parabolic subgroup $P \subset G$, with Levi quotient $M := P/U_P$, such that
	\begin{compactitem}
		\item $U_P \subset H \subset P$, so that we can set $H_M := H/U_P$;
		\item the restriction of $\mathcal{T}$ to $(Z_M \cap H_M)(F)$ contains a complex torus of dimension $> 0$ (note that the characters of $\mathcal{T}$ are trivial on $U_P(F)$).
	\end{compactitem}
\end{hypothesis}

\begin{theorem}[see {\cite[Theorem 6.3.6]{LiLNM}} and {\cite[Theorem 3.13]{Li19}}]\label{prop:LFE}
	Let $\pi$ be an irreducible admissible representation of $G(F)$. Under the Hypotheses \ref{hyp:sphericity} and \ref{hyp:LFE}, there exists a unique meromorphic family of linear maps
	\[ \gamma(\pi, \lambda, \psi): \mathcal{N}_\pi(\check{X}^+) \to \mathcal{N}_\pi(X^+), \quad \lambda \in \Lambda_{\CC} \]
	indexed by $\mathcal{T}$ and rational in the non-Archimedean case, such that
	\[ \check{Z}_\lambda\left( \check{\eta}, v, \mathcal{F}_\psi \xi \right) = Z_\lambda\left( \gamma(\lambda, \pi, \psi)(\check{\eta}), v, \xi \right) \] 
	for all $\check{\eta} \in \mathcal{N}_\pi(\check{X}^+)$, $v \in V_\pi$, $\xi \in \Schw(X)$, as meromorphic or rational families.
\end{theorem}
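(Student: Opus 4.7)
The plan is to view both sides of the functional equation as elements of a single ambient space of equivariant functionals, to show that this space is finite-dimensional of the expected dimension for generic $\lambda$, and to produce $\gamma(\lambda,\pi,\psi)$ by matching two isomorphisms onto it. For $\lambda$ in the convergence region provided by Theorem \ref{prop:convergence-meromorphy}, I would first set up the $\CC$-linear maps
\[
\Phi_\lambda : \mathcal{N}_\pi(X^+) \longrightarrow \mathcal{I}_\lambda := \Hom_{G(F)}\bigl(V_\pi \otimes \Schw(X),\; \CC_{|\omega|^\lambda}\bigr), \quad \Phi_\lambda(\eta)(v\otimes\xi) = Z_\lambda(\eta,v,\xi),
\]
and symmetrically $\check\Phi_\lambda : \mathcal{N}_\pi(\check X^+) \to \check{\mathcal{I}}_\lambda$. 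The equivariance of $\Phi_\lambda$ with character $|\omega|^\lambda$ comes from the half-density normalization recalled in \S\ref{sec:PVS}: the translation of $|f|^\lambda$ contributes exactly $|\omega|^\lambda$, compensating the shift of the Haar measure. Because $\mathcal{F}_\psi$ is a $G(F)$-equivariant isomorphism $\Schw(X)\rightiso\Schw(\check X)$, pullback along it identifies $\check{\mathcal{I}}_\lambda \rightiso \mathcal{I}_\lambda$, so both $\Phi_\lambda$ and $\mathcal{F}_\psi^* \circ \check\Phi_\lambda$ land in $\mathcal{I}_\lambda$. Injectivity of $\Phi_\lambda$ is immediate: testing against $\xi$ compactly supported in $X^+(F)$ (where no convergence issue arises) recovers $\eta(v)|f|^\lambda$ as a smooth half-density on $X^+(F)$, so $\Phi_\lambda(\eta) = 0$ forces $\eta = 0$; the same argument works for $\check\Phi_\lambda$. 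Moreover $\mathcal{N}_\pi(X^+)\simeq \mathcal{N}_\pi(\check X^+)$ via the $G$-equivariant isomorphism of Proposition \ref{prop:PVS-generality}, so the two sources have the same dimension.

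The crux is then to establish the dimension equality $\dim_{\CC} \mathcal{I}_\lambda = \dim_{\CC}\mathcal{N}_\pi(X^+)$ for $\lambda$ outside a thin locus. I would filter $\Schw(X)$ by the order of vanishing along a $G$-orbit stratification of $\partial X(F)$; the successive subquotients are Schwartz-type spaces on boundary orbits tensored with symmetric powers of the conormal bundle. By Frobenius reciprocity, the contribution of each boundary orbit $G(F)\cdot y$ to $\mathcal{I}_\lambda$ is controlled by Homs of the form $\Hom_{H(F)}(V_\pi,\; \chi_\lambda \otimes \Sym(N_y^*))$, where $H=\Stab_G(y)$ and $\chi_\lambda$ is a twist of $|\omega|^\lambda|_{H(F)}$ by characters coming from the transverse geometry. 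This is where Hypothesis \ref{hyp:LFE} enters: $H$ sits inside a parabolic $P$ with Levi $M$ and $U_P \subset H$, and the restriction of $\mathcal{T} = \{|\omega|^\lambda : \lambda \in \Lambda_{\CC}\}$ to $(Z_M \cap H_M)(F)$ contains a positive-dimensional complex torus. Combined with the finiteness of the exponents appearing in the Jacquet module of the irreducible finite-length $\pi$ along $P$ (replaced by Casselman's leading exponents in the Archimedean case), this forces the boundary Homs to vanish whenever $\lambda$ avoids a proper Zariski-closed subset of $\mathcal{T}$ (resp.\ a countable union of hyperplanes). Hence only the open orbit $X^+(F)$ contributes to $\mathcal{I}_\lambda$ at generic $\lambda$, and the contribution from the open orbit is exactly $\mathcal{N}_\pi(X^+)$, yielding the claimed dimension.

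With the dimension equality in hand, both $\Phi_\lambda$ and $\mathcal{F}_\psi^* \circ \check\Phi_\lambda$ are isomorphisms at generic $\lambda$, and I would define
\[
\gamma(\lambda,\pi,\psi) := \Phi_\lambda^{-1} \circ \mathcal{F}_\psi^* \circ \check\Phi_\lambda : \mathcal{N}_\pi(\check X^+) \longrightarrow \mathcal{N}_\pi(X^+),
\]
which satisfies the functional equation by construction; uniqueness follows from the injectivity of $\Phi_\lambda$. The meromorphy (Archimedean) and rationality (non-Archimedean) of $\gamma$ reduce to the corresponding property of $\Phi_\lambda$ and $\mathcal{F}_\psi^*\circ\check\Phi_\lambda$ as families of maps between finite-dimensional spaces, granted by Theorem \ref{prop:convergence-meromorphy}, together with Bernstein's theorem on rationality of intertwining operators in the non-Archimedean case (and the holonomicity-based arguments of \cite{Li19} in the Archimedean case) applied to the determinant of $\Phi_\lambda$, which is nonzero on a dense open subset of $\mathcal{T}$. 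The main obstacle is the dimension count of $\mathcal{I}_\lambda$ in the previous paragraph: it is precisely here that Hypothesis \ref{hyp:LFE} is indispensable, since without it one cannot exclude extra boundary functionals appearing at exceptional $\lambda$, and the matching of the images of $\Phi_\lambda$ and $\mathcal{F}_\psi^*\circ\check\Phi_\lambda$ would break down.
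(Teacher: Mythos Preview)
Your non-Archimedean argument is essentially the one the paper has in mind: the reference to \cite[\S 6.3]{LiLNM} (and to \cite[p.474, (A.2)]{Sa89} before it), together with the very shape of Hypothesis~\ref{hyp:LFE}, point to exactly the orbit-stratification and Frobenius-reciprocity argument you outline, in which the positive-dimensional torus inside the restriction of $\mathcal{T}$ to $(Z_M \cap H_M)(F)$ is what kills the boundary Jacquet-module contributions for generic $\lambda$.

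The Archimedean half, however, has a genuine gap. Hypothesis~\ref{hyp:LFE} is explicitly conditional on $F$ being non-Archimedean; for $F=\R$ or $\CC$ it is vacuous, so you cannot invoke that torus to kill boundary functionals, and your parenthetical ``replaced by Casselman's leading exponents'' does not by itself supply the needed genericity in $\lambda$ without some structural hypothesis on the boundary stabilizers. More seriously, the filtration of $\Schw(X)$ by a real orbit stratification is delicate in this setting: $\partial X(\R)$ may carry infinitely many $G(\R)$-orbits, and tempered distributions supported on a singular stratum are not exhausted by finitely many transverse derivatives, so the subquotient analysis you sketch does not obviously terminate.

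The Archimedean proof in \cite{Li19}, whose ingredients the paper recapitulates in \S\S\ref{sec:invariant-theory}--\ref{sec:pf-arch} for the generalized Theorem~\ref{prop:LFE-beyond}, uses a completely different mechanism. One constructs $G$-invariant differential operators of Capelli type from $(\mathcal{P}_\mu \otimes \mathcal{D}_\mu)^G$ (Lemma~\ref{prop:diff-op}), computes their images under Knop's Harish-Chandra isomorphism via Corollaries~\ref{prop:top-term} and~\ref{prop:HC-twist}, and combines these with the regularity of the holonomic $\mathscr{D}$-modules generated by $K$-finite generalized matrix coefficients and the decomposition of Proposition~\ref{prop:rho-decomposition}. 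The functional equation is then extracted from the relations among zeta integrals induced by these differential operators, with no boundary-orbit analysis at all; this is precisely why no Archimedean analogue of Hypothesis~\ref{hyp:LFE} is required.
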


We conclude this section with two basic instances of this framework. We choose $\Omega \in \topwedge \check{X}$ and $\Psi \in \topwedge X$ such that $\lrangle{\Omega, \Psi} = 1$.

\begin{example}[Sato--Shintani]
	Take $(G, \rho, X)$ subject to the Hypothesis \ref{hyp:sphericity} and take $\pi = \mathbf{1}$. Let $O_1, \ldots, O_r$ be the $G(F)$-orbits in $X^+(F)$, all being open and closed. Let $c_i$ be the characteristic function of $O_i \subset X^+(F)$. Take an invariant half-density $|\phi|^{-1/4} |\Omega|^{1/2}$ afforded by Proposition \ref{prop:L-trivializable}. We may assume $|\phi|^{1/4} = |f|^{\lambda_0}$ as in Remark \ref{rem:eta-0}. Then
	\[\begin{tikzcd}[row sep=tiny]
		\CC^k \arrow[r, "\sim"] & \mathcal{N}_{\mathbf{1}}(X^+) \\
		(0, \ldots, \underbracket{\; 1 \;}_{i\text{-th slot}}, \ldots, 0) \arrow[mapsto, r] & \eta_i := c_i |\phi|^{-1/4} |\Omega|^{1/2}.
	\end{tikzcd}\]
	Choose a non-degenerate relative invariant to obtain $X^+ \rightiso \check{X}^+$, so that the $G(F)$-orbits in $X^+(F)$ and $\check{X}^+(F)$ are in bijection and both labeled by $\{1, \ldots, k\}$. Define $\check{\eta}_1, \ldots, \check{\eta}_k$ in this manner.
	
	For all $1 \leq i \leq k$ and $\Re(\lambda) \relgg{X} 0$, we obtain
	\begin{equation*}
		Z_\lambda\left( \eta_i, 1, \xi \right) = \int_{O_i} |f|^{\lambda - \lambda_0} \xi_0 |\Omega|.
	\end{equation*}

	Identifying $\Lambda_{\CC}$ with $\CC^r$ via the basic eigencharacters, $Z_\lambda\left( \eta_i, 1, \xi \right)$ is seen to equal the local zeta integrals $Z_i$ in \cite[\S 1.4, \S 2.2]{Sa89} up to a shift by $\lambda_0$. Moreover, the functional equation in Theorem \ref{prop:LFE} (conditional on Hypothesis \ref{hyp:LFE}) turns out to coincide with that in \cite[p.471, p.477]{Sa89}: our $\gamma$-factor becomes the $\Gamma$-matrices in \textit{loc.\ cit.}
\end{example}

\begin{example}[Godement--Jacquet]
	Let $D$ be a central simple $F$-algebra of dimension $n^2$ and let $G = D^\times \times D^\times$ act on $X := D$ by $x \xrightarrow{(g, h)} h^{-1} x g$. This satisfies Hypothesis \ref{hyp:sphericity} since $X^+ = D^\times$ is a symmetric space. The reduced norm $\mathrm{Nrd}$ is the unique basic relative invariant up to $F^\times$. In parallel, $\mathbf{X}^*_\rho(G)$ is generated by $(g, h) \mapsto \mathrm{Nrd}(h)^{-1} \mathrm{Nrd}(g)$.

	Let $\mathrm{Trd}$ denote the reduced trace and identify $X$ and $\check{X}$ via the perfect pairing $(x, y) \mapsto \mathrm{Trd}(xy)$ on $X \times X$. One can check (see \cite[Lemma 6.4.1]{LiLNM}) that $(G, \check{\rho}, \check{X})$ is isomorphic to $X$ with the action $x \xmapsto{(g, h)} g^{-1} x h$. In fact, $\mathrm{Nrd}$ is non-degenerate, and the induced equivariant isomorphism $X^+ \rightiso \check{X}^+$ is $x \mapsto x^{-1}$ (see \cite[Proposition 6.4.2]{LiLNM}). Note that $\mathrm{Nrd}$ is a basic relative invariant for both $X$ and $\check{X}$, but the eigencharacters are opposite.
	
	The irreducible admissible $\pi$ with $\mathcal{N}_\pi(X^+) \neq \{0\}$ take the form $\sigma \boxtimes \check{\sigma}$. Ditto for $\mathcal{N}_\pi(\check{X}^+)$. The spaces $\mathcal{N}_{\sigma \boxtimes \check{\sigma}}(X^+)$ and $\mathcal{N}_{\sigma \boxtimes \check{\sigma}}(\check{X}^+)$ are spanned respectively by
	\begin{align*}
		\eta_{\Omega}: v \otimes \check{v} & \mapsto \lrangle{\check{v}, \pi(\cdot) v} |\det|^{-n/2} |\Omega|^{1/2}, \\
		\check{\eta}_{\Psi}: v \otimes \check{v} & \mapsto \lrangle{\check{\pi}(\cdot) \check{v}, v} |\det|^{-n/2} |\Psi|^{1/2}.
	\end{align*}

	We write $\xi = \xi_0 |\Omega|^{1/2} \in \Schw(X)$, $\check{\xi} = \check{\xi}_0 |\Psi|^{1/2} \in \Schw(\check{X})$, and let $Z^{\mathrm{GJ}}$ (resp.\ $\gamma^{\mathrm{GJ}}$) be the Godement--Jacquet integrals in \cite[(15.4.3)]{GH11-2} (resp.\ the Godement--Jacquet $\gamma$-factors). For simplicity, assume that $\psi$ is chosen so that the Haar measures $|\Omega|$ and $|\Psi|$ are mutually dual. Our formalism reduces to Godement--Jacquet theory up to a $\frac{1}{2}$-shift, namely:
	\begin{align*}
		Z_\lambda \left( \eta_{\Omega}, v \otimes \check{v}, \xi \right) & = Z^{\mathrm{GJ}}\left( \lambda + \frac{1}{2}, \lrangle{\check{v}, \pi(\cdot) v}, \xi_0 \right), \\
		\check{Z}_\lambda \left( \check{\eta}_{\Psi}, v \otimes \check{v}, \xi \right) & = Z^{\mathrm{GJ}}\left( -\lambda + \frac{1}{2}, \lrangle{\check{\pi}(\cdot) \check{v}, v}, \xi_0 \right), \\
		\gamma(\sigma \boxtimes \check{\sigma}, \lambda, \psi)\left( \check{\eta}_{\Psi} \right) & = \gamma^{\mathrm{GJ}}\left( \lambda + \frac{1}{2}, \sigma, \psi \right) (\eta_\Omega) .
	\end{align*}
	Accordingly, Theorem \ref{prop:LFE} reduces to the usual Godement--Jacquet functional equation. Indeed, the Hypothesis \ref{hyp:LFE} can be verified in this case; see \cite[\S 6.4]{LiLNM} for the case of split $D$.
\end{example}

\section{Beyond spherical spaces}\label{sec:beyond-spherical}

The goal here is to loosen the Hypothesis \ref{hyp:sphericity} at the cost of constraining the admissible representations $\pi$ in question, so that the results from \S\ref{sec:spherical-case} remain valid. The framework will include more cases of arithmetic interest, for example that in \S\ref{sec:SW}.

As in \S\ref{sec:PVS}, we begin with a field $F$ of characteristic zero, a connected reductive $F$-group $G$ together with a reductive prehomogeneous vector space $(G, \rho, X)$ such that $X^+$ is affine. In addition, we also fix a normal connected reductive subgroup $N \lhd G$ and define
\begin{align*}
	\overline{G} & := G/N, \\
	Y^+ & := X^+ /\!/ N \quad \text{(categorical quotient)}.
\end{align*}

\begin{hypothesis}\label{hyp:beyond-sphericity}
	For $(G, \rho, X)$ and $N \lhd G$ as above, we assume that
	\begin{compactitem}
		\item $N$ acts freely on $X^+$;
		\item the quotient map $G(F) \to \overline{G}(F)$ is surjective;
		\item $N$ is semisimple;	
		\item the conditions on $Y^+$ under $\overline{G}$-action as in Hypothesis \ref{hyp:sphericity}, namely: either
		\begin{compactitem}
			\item $F$ is Archimedean and $Y^+$ is a spherical homogeneous $\overline{G}$-space,
			\item $F$ is non-Archimedean, $\overline{G}$ is split and $Y^+$ is a wavefront spherical homogeneous $\overline{G}$-space, or
			\item $F$ is non-Archimedean and $Y^+$ is a symmetric space under $\overline{G}$.
		\end{compactitem}
	\end{compactitem}
\end{hypothesis}

Observe that the case $N = \{1\}$ reduces to Hypothesis \ref{hyp:sphericity}.

\begin{lemma}\label{prop:duality-beyond}
	If $(G, \rho, X)$ satisfies Hypothesis \ref{hyp:beyond-sphericity}, so does $(G, \check{\rho}, \check{X})$.
\end{lemma}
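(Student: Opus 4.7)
The plan is to reduce everything to the $G$-equivariant isomorphism $X^+ \rightiso \check{X}^+$ provided by Proposition \ref{prop:PVS-generality}, and to observe that this isomorphism is automatically compatible with the $N$-action and with formation of categorical quotients.

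First, I would invoke Proposition \ref{prop:PVS-generality}: since $X^+$ is affine, we may choose a non-degenerate relative invariant $f \in F(X)$, and $\varphi := f^{-1}\dd f$ yields a $G$-equivariant isomorphism $\varphi: X^+ \rightiso \check{X}^+$ of $F$-varieties. In particular $\check{X}^+$ is affine (being isomorphic to the affine variety $X^+$), so the categorical quotient $\check{Y}^+ := \check{X}^+/\!/N$ exists as an affine $\overline{G}$-variety since $N$ is reductive. The conditions on $G$ and $N$ alone, namely surjectivity of $G(F) \to \overline{G}(F)$ and semisimplicity of $N$, are manifestly preserved.

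Second, because $\varphi$ is $G$-equivariant and $N \subset G$, it is in particular $N$-equivariant. Thus freeness of the $N$-action on $X^+$ transports via $\varphi$ to freeness of the $N$-action on $\check{X}^+$. Moreover, by functoriality of the categorical quotient by a normal reductive subgroup, $\varphi$ descends to an isomorphism $\bar{\varphi}: Y^+ \rightiso \check{Y}^+$, which is $\overline{G}$-equivariant since $\varphi$ is $G$-equivariant and $N$ is normal.

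Finally, the last clause of Hypothesis \ref{hyp:beyond-sphericity} asserts that $Y^+$ is spherical (resp.\ wavefront spherical, resp.\ symmetric) as a $\overline{G}$-space in the appropriate Archimedean or non-Archimedean regime. Each of these properties is invariant under $\overline{G}$-equivariant isomorphisms of homogeneous $\overline{G}$-spaces, so it transfers along $\bar{\varphi}$ to $\check{Y}^+$. This settles the remaining clause of Hypothesis \ref{hyp:beyond-sphericity} for $(G, \check{\rho}, \check{X})$. I do not anticipate any genuine obstacle here: the only thing to double-check is that the descent of $\varphi$ to quotients is indeed $\overline{G}$-equivariant, which is automatic from the universal property of $/\!/N$.
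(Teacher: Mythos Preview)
Your proposal is correct and follows exactly the approach of the paper: the paper's proof is the one-line ``This follows from the fact that $X^+ \simeq \check{X}^+$,'' and you have simply spelled out why the $G$-equivariant isomorphism $\varphi: X^+ \rightiso \check{X}^+$ of Proposition \ref{prop:PVS-generality} transports each clause of Hypothesis \ref{hyp:beyond-sphericity} to the dual side. There is no substantive difference.
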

\begin{proof}
	This follows from the fact that $X^+ \simeq \check{X}^+$. 
\end{proof}

\begin{lemma}\label{prop:N-torsor}
	Under Hypothesis \ref{hyp:beyond-sphericity}, the quotient morphism $X^+ \to Y^+$ is an $N$-torsor.
\end{lemma}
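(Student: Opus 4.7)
The plan is to show $\pi: X^+ \to Y^+$ is a principal $N$-bundle by combining scheme-theoretic freeness of the $N$-action with Mumford's geometric invariant theory. The available hypotheses are: $X^+$ is affine and smooth (as an open subset of the affine space $X$); $N$ is reductive, being semisimple by assumption; and $N$ acts set-theoretically freely on $X^+$, which in characteristic zero upgrades automatically to scheme-theoretic freeness, since stabilizer group schemes are reduced.

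First I would identify $Y^+$ with $\Spec F[X^+]^N$ and verify that $\pi$ is a geometric quotient. Each fiber of the categorical quotient of a reductive group action on an affine variety contains a unique closed orbit; moreover the closure of any non-closed orbit meets a closed orbit of strictly smaller dimension. Freeness forces every orbit to have the maximal dimension $\dim N$, so every orbit is closed. Hence the geometric fibers of $\pi$ are precisely the $N$-orbits, each isomorphic to $N$ via the orbit map; in particular $\pi$ is surjective with equidimensional fibers.

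To upgrade this to an $N$-torsor, it suffices to exhibit étale-local sections of $\pi$. Since $N$ is reductive and acts freely on the smooth affine variety $X^+$ in characteristic zero, Luna's étale slice theorem applies: through every closed point $x \in X^+$ one obtains a locally closed smooth subvariety $S \ni x$ transverse to the $N$-orbit such that the multiplication map $N \times S \to X^+$ is étale onto a saturated open neighborhood of $Nx$, and the induced map $S \to Y^+$ is étale. This provides the required étale-local trivialization, whence $\pi$ is an $N$-torsor.

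The principal point deserving care is the passage from set-theoretic freeness to the assertion that the action morphism $N \times X^+ \to X^+ \dtimes{Y^+} X^+$ is a scheme-theoretic isomorphism; in characteristic zero this is automatic as remarked above. As an alternative to Luna's theorem, one could cite Mumford's GIT (Proposition 0.9, generalized by Seshadri) directly: a reductive group acting scheme-theoretically freely on an affine scheme yields a principal bundle over the quotient. Either route delivers the torsor structure cleanly.
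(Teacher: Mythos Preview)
Your proposal is correct and takes essentially the same approach as the paper: the paper's proof is a one-line citation of Luna's slice theorem via \cite[p.199, Corollary]{MFK94}, which is precisely the result you invoke (and your alternative via Mumford's Proposition 0.9 is in the same circle of ideas). Your write-up simply unpacks in more detail what the paper leaves as a black-box citation.
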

\begin{proof}
	Since $\mathrm{char}(F) = 0$, this is a consequence of Luna's slices: see \cite[p.199, Corollary]{MFK94}.
\end{proof}

Hereafter, we assume $F$ is a local field with $\mathrm{char}(F) = 0$. Denote by $|\cdot|$ the normalized absolute value on $F$. The admissible representations $\pi$ of $G(F)$ under consideration are assumed to be trivial on $N(F)$, hence they can also be viewed as admissible representations of $\overline{G}(F)$.

Denote by $p: X^+ \to Y^+$ the quotient morphism. As $p(X^+(F))$ is a finite union of $G(F)$-orbits, it is closed and open in $Y^+(F)$.

\begin{proposition}\label{prop:finiteness-beyond}
	For every admissible representation $\pi$ of $\overline{G}(F)$, the $\CC$-vector space $\mathcal{N}_\pi(X^+)$ is finite-dimensional.
\end{proposition}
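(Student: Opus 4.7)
The plan is to construct an injection $\mathcal{N}_\pi(X^+) \hookrightarrow \mathcal{N}_\pi(Y^+)$ and then invoke Theorem \ref{prop:finite-mult} applied to the homogeneous $\overline{G}$-space $Y^+$, which by Hypothesis \ref{hyp:beyond-sphericity} satisfies Hypothesis \ref{hyp:sphericity}. The first thing I would set up is the behaviour of half-densities under the quotient map $p: X^+ \to Y^+$. By Lemma \ref{prop:N-torsor}, $p$ is an $N$-torsor; the relative tangent bundle $T_{X^+/Y^+}$ is therefore the trivial vector bundle with fibre $\mathfrak{n} := \Lie(N)$ via the infinitesimal action. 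The short exact sequence $0 \to \mathfrak{n}_{X^+} \to TX^+ \to p^* TY^+ \to 0$ then gives a $G$-equivariant identification $\det T^* X^+ \cong p^* \det T^* Y^+ \otimes (\det \mathfrak{n}^*)_{X^+}$. Since $N$ is semisimple, its adjoint action on $\det \mathfrak{n}$ is trivial, so after taking absolute values and square roots one obtains a canonical $G$-equivariant isomorphism $\mathcal{L}^{1/2}_{X^+} \simeq p^* \mathcal{L}^{1/2}_{Y^+}$. Consequently, $N(F)$-invariant smooth sections of $\mathcal{L}^{1/2}_{X^+}$ over $X^+(F)$ correspond bijectively to smooth sections of $\mathcal{L}^{1/2}_{Y^+}$ over $p(X^+(F))$.

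Next, I would use the fact, already recorded above the statement, that $p(X^+(F))$ is closed and open in $Y^+(F)$ (being a finite union of $\overline{G}(F)$-orbits, each of which is open and closed in the homogeneous space $Y^+(F)$). Extension by zero therefore provides a $\overline{G}(F)$-equivariant embedding
\[
C^\infty\bigl(p(X^+(F));\, \mathcal{L}^{1/2}_{Y^+}\bigr) \hookrightarrow C^\infty\bigl(Y^+(F);\, \mathcal{L}^{1/2}_{Y^+}\bigr).
\]

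Now given $\eta \in \mathcal{N}_\pi(X^+)$, triviality of $\pi$ on $N(F)$ forces $\eta(V_\pi) \subset C^\infty(X^+;\, \mathcal{L}^{1/2}_{X^+})^{N(F)}$. Composing with the identification of the first paragraph and the extension by zero gives a linear map $\eta \mapsto \overline{\eta}$ from $\mathcal{N}_\pi(X^+)$ into the space of $\overline{G}(F)$-equivariant morphisms $\pi \to C^\infty(Y^+;\, \mathcal{L}^{1/2}_{Y^+})$ — equivariance with respect to $\overline{G}(F)$ (not merely $G(F)$) requires the surjectivity assumption in Hypothesis \ref{hyp:beyond-sphericity}. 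Injectivity of $\eta \mapsto \overline{\eta}$ is clear: recovering $\eta(v)$ from $\overline{\eta}(v)$ amounts to pulling back along $p$ on $p(X^+(F))$, and vanishing of $\overline{\eta}(v)$ forces vanishing of $\eta(v)$ by the torsor identification. Theorem \ref{prop:finite-mult} then bounds $\dim_{\CC} \mathcal{N}_\pi(Y^+) < \infty$, concluding the proof.

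The main delicacy I anticipate is the half-density identification in the first step: one must ensure that the trivialization of $\mathcal{L}^{1/2}_{X^+}$ as $p^*\mathcal{L}^{1/2}_{Y^+}$ is $N$-equivariant, which is precisely where the semisimplicity (hence unimodularity and triviality of $\det \mathrm{Ad}_{\mathfrak{n}}$) of $N$ is invoked; without it, $N(F)$-invariant half-densities on $X^+(F)$ would only descend up to a half-power of the modular character. Everything else is formal manipulation once this identification is in hand.
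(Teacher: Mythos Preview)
Your proof is correct and follows essentially the same route as the paper: both construct an injection $\mathcal{N}_\pi(X^+) \hookrightarrow \mathcal{N}_\pi(Y^+)$ by dividing out an invariant half-density along the $N$-fibres and extending by zero off $p(X^+(F))$, then invoke Theorem \ref{prop:finite-mult}. The paper phrases the fibrewise step simply as ``divide by a non-vanishing invariant half-density $\alpha$ on $N(F)$'', which is exactly your identification $\mathcal{L}^{1/2}_{X^+} \simeq p^* \mathcal{L}^{1/2}_{Y^+}$ made concrete; your version has the merit of spelling out where the semisimplicity of $N$ is used.
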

\begin{proof}
	Choose a non-vanishing invariant half-density $\alpha$ on $N(F)$. We obtain a linear embedding
	\begin{equation}\label{eqn:finiteness-beyond}\begin{aligned}
		\mathcal{N}_\pi(X^+) & \hookrightarrow \mathcal{N}_\pi(Y^+) \\
		\eta & \mapsto \left[ y \mapsto \begin{cases}
			\eta(x) / \alpha, & \exists x \in X^+(F), \; p(x)=y \\
			0, & \text{otherwise}
		\end{cases}\right].
	\end{aligned}\end{equation}
	Its image is precisely the subspace of intertwining maps $\overline{\eta}: \pi \to C^\infty(p(X^+(F)))$. It remains to apply Theorem \ref{prop:finite-mult} to $Y^+$ and $\overline{G}$.
\end{proof}

Choose basic relative invariants $f_1, \ldots, f_r$ with eigencharacters $\omega_1, \ldots, \omega_r$, and define $|\omega|^\lambda$, $|f|^\lambda$ by \eqref{eqn:omega-lambda}, where $\lambda \in \Lambda_{\CC}$. To all $\pi$ as above, $\eta \in \mathcal{N}_\pi(X^+)$, $v \in V_\pi$ and $\xi \in \Schw(X)$, we define $Z_\lambda(\eta, v, \xi)$ as in Definition \ref{def:zeta}, granting its convergence for $\Re(\lambda) \relgg{X} 0$.

As before, $\mathcal{T} := \left\{ |\omega|^\lambda : \lambda \in \Lambda_{\CC} \right\}$ is isomorphic to $\Lambda_{\CC}$ (resp.\ to $\left(\CC^\times \right)^r$) when $F$ is Archimedean (resp.\ non-Archimedean). The zeta integrals are parametrized by $\mathcal{T}$.

Now we can state the extensions of \S\ref{sec:spherical-case}. The statements are identical to Theorems \ref{prop:convergence-meromorphy} and \ref{prop:LFE}. As before, we omit the issues about continuity, etc.\ for the sake of simplicity; details can be found in \cite{LiLNM, Li19}.

\begin{theorem}\label{prop:convergence-meromorphy-beyond}
	Let $\pi$ be an admissible representation of $\overline{G}(F)$. Under the Hypothesis \ref{hyp:beyond-sphericity}, there exists $\kappa = \kappa(\pi) \in \Lambda_{\R}$, depending solely on $(G, \rho, X)$, $N$ and $\pi$, such that $Z_\lambda(\eta, v, \xi)$ is defined by a convergent integral for all $\eta, v, \xi$ whenever $\Re(\lambda) \relgeq{X} \kappa$, and $Z_\lambda(\eta, v, \xi)$ is holomorphic in $\lambda$ in the interior of that region.
	
	In this case, $\lambda \mapsto Z_\lambda(\eta, v, \xi)$ extends to a meromorphic family indexed by $\mathcal{T}$, for each $(\eta, v, \xi)$. It is a rational family when $F$ is non-Archimedean.
	
	Moreover, in the Archimedean case, the result on convergence holds when $Y^+$ is only \emph{real spherical}, i.e.\ there is an open dense orbit in $Y^+$ under the minimal $\R$-parabolic subgroup of $\overline{G}$.
\end{theorem}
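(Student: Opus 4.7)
The plan is to import the corresponding result on the spherical space $Y^+$ (Theorem \ref{prop:convergence-meromorphy}) via the $N$-torsor $p: X^+ \to Y^+$ from Lemma \ref{prop:N-torsor}. Two geometric facts drive the reduction. First, since $N$ is semisimple we have $\mathbf{X}^*(N) = 0$, so every basic eigencharacter $\omega_i$ is trivial on $N$, every basic relative invariant $f_i$ descends to $\bar{f}_i \in F[Y^+]$, and $|f|^\lambda = p^* |\bar{f}|^\lambda$. Second, by \eqref{eqn:finiteness-beyond} every $\eta \in \mathcal{N}_\pi(X^+)$ is, up to a fixed invariant half-density on the $N(F)$-fibers, the pullback along $p$ of a unique $\bar{\eta} \in \mathcal{N}_\pi(Y^+)$. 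In this sense, the only ingredient of $Z_\lambda(\eta, v, \xi)$ that does not originate from $Y^+$ is the Schwartz--Bruhat half-density $\xi$.

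For convergence and holomorphy, I would first apply Proposition \ref{prop:rho-decomposition}, whose proof is insensitive to sphericity, to decompose $X^+(F) \simeq X^+(F)_\rho \times A_\rho$; the integral then splits into a Mellin-type radial integral on $A_\rho$ weighted by $|\omega|^\lambda$ together with a transverse integral on $X^+(F)_\rho$. On the transverse slice, the decay of $\eta(v)$ is controlled by that of $\bar{\eta}(v)$ on $Y^+(F)$, for which the uniform bounds of \cite[Theorem 6.2.7]{LiLNM} and \cite[Theorem 3.10]{Li19} apply; these bounds require only the real-spherical hypothesis on $Y^+$, which accounts for the final assertion of the theorem. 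Off $\partial X$ the decay of $\xi$ is Schwartz--Bruhat, and along $\partial X$ any blow-up of $\xi$ is dominated by $|f|^\lambda$ once $\Re(\lambda) \relgg{X} 0$, yielding the uniform bound $\kappa(\pi)$ and holomorphy in the interior.

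For meromorphic (resp.\ rational) continuation I would repeat the arguments of the spherical case nearly verbatim. In the Archimedean setting, following \cite{Li18, Li19}, one uses that $\bar{\eta}(v)$ satisfies a holonomic system on $Y^+(\R)$; pulling back along the smooth morphism $p$ preserves holonomicity, and an equivariant resolution of singularities of the pair $(X, \partial X)$ then delivers meromorphic continuation. In the non-Archimedean setting, following \cite[Chapter 6]{LiLNM}, continuation comes from Igusa theory applied to a toroidal compactification of $X^+$; the required compatibility between such a compactification and the spherical toroidal embedding of $Y^+$ is exactly what Knop--Krötz \cite{KK16} supplies, and the combinatorics of the embedding then produces explicit denominators rational in $q^{\lambda_i}$.

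The hard part will be the choice of toroidal data on $X$ compatible simultaneously with the $G$-action, the divisor $\partial X$, and the $N$-orbit structure on $X^+$, since the fibers of $p$ and the boundary $\partial X$ can interact nontrivially. This is precisely where the general theory of Knop--Krötz enters in the non-Archimedean argument; in the Archimedean case the analogous difficulty manifests as the need to choose a resolution of $(X, \partial X)$ adapted to $p$ in order to control the polar locus of $\eta(v)$ near the boundary. Everything else should reduce cleanly to the spherical case already established on $Y^+$.
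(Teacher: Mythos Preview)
Your proposal is correct and follows essentially the same route as the paper: factor $\eta$ through $\bar\eta$ on $Y^+$ via \eqref{eqn:finiteness-beyond}, pull back the holonomicity (Archimedean) or $D$-finiteness / smooth asymptotics (non-Archimedean) from $Y^+$ to $X^+$, and invoke the Knop--Kr\"otz theory \cite{KK16} to build a toroidal compactification of $X^+$ compatible with one of $Y^+$ for the Igusa argument. One small correction: Proposition~\ref{prop:rho-decomposition} is stated and proved only for $F=\R$, so it cannot serve as the organizing decomposition in the non-Archimedean case; there the paper instead uses the Cartan decomposition for $Y^+(F)$ together with the smooth asymptotics of \cite[Corollary 5.1.8]{SV17} to control the transverse integral, with convergence handled inside the same Igusa framework rather than as a separate radial/transverse splitting.
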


We fix an additive character $\psi$ of $F$ to define $\mathcal{F}_\psi: \Schw(X) \to \Schw(\check{X})$. Denote the zeta integrals for $(G, \check{\rho}, \check{X})$ by $\check{Z}_\lambda$ as before.

\begin{theorem}\label{prop:LFE-beyond}
	Let $\pi$ be an irreducible admissible representation of $\overline{G}(F)$. Under the Hypotheses \ref{hyp:beyond-sphericity} and \ref{hyp:LFE}, there exists a unique meromorphic family of linear maps
	\[ \gamma(\pi, \lambda, \psi): \mathcal{N}_\pi(X^+) \to \mathcal{N}_\pi(X^+), \quad \lambda \in \Lambda_{\CC} \]
	depending on $\psi$, which is indexed by $\mathcal{T}$ and rational in the non-Archimedean case, such that
	\[ \check{Z}_\lambda\left( \check{\eta}, v, \mathcal{F}_\psi \xi \right) = Z_\lambda\left( \gamma(\lambda, \pi, \psi)(\check{\eta}), v, \xi \right) \] 
	for all $\check{\eta} \in \mathcal{N}_\pi(\check{X}^+)$, $v \in V_\pi$, $\xi \in \Schw(X)$, as meromorphic or rational families.
\end{theorem}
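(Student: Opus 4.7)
The plan is to establish existence and uniqueness of $\gamma(\pi,\lambda,\psi)$ by reducing, as far as possible, to the spherical case on $Y^+$ via the $N$-torsor structure of Lemma \ref{prop:N-torsor}, and then closing the remaining gap (which lives on the ambient affine spaces $X$ and $\check{X}$) by following the strategies of \cite{LiLNM} and \cite{Li19}, appropriately adapted. Uniqueness is the easy half: by Theorem \ref{prop:convergence-meromorphy-beyond}, both sides of the proposed identity are meromorphic, and rational in the non-Archimedean case, in $\lambda\in\mathcal{T}$; and for generic $\lambda$ the bilinear form $(v,\xi)\mapsto Z_\lambda(\eta,v,\xi)$ separates points in the finite-dimensional space $\mathcal{N}_\pi(X^+)$, since $\xi$ may be supported in an arbitrarily small open in $X^+(F)$. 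Thus any family satisfying the stated identity on a nonempty open subset of $\mathcal{T}$ is unique and automatically extends meromorphically (rationally) to all of $\mathcal{T}$.

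For existence, I would first produce an \emph{abstract} functional equation: for $\Re(\lambda)\relgg{X}0$, the trilinear form $(v,\xi,\check{\eta})\mapsto \check{Z}_\lambda(\check{\eta},v,\mathcal{F}_\psi\xi)$ should lie in the image of the map $\mathcal{N}_\pi(X^+)\to\{\text{bilinear forms on }V_\pi\times\Schw(X)\}$ given by $\eta\mapsto Z_\lambda(\eta,\cdot,\cdot)$, and the induced assignment $\check{\eta}\mapsto\gamma(\pi,\lambda,\psi)(\check{\eta})$ should be linear and meromorphic. Equivalently, the distribution $\mathcal{F}_\psi^{-1}\bigl(\check{\eta}(v)|\check{f}|^\lambda\bigr)$ on $X(F)$, extended by meromorphic continuation, should coincide with $\eta'(v)|f|^\lambda$ modulo distributions supported on $\partial X$, for a unique $\eta'\in\mathcal{N}_\pi(X^+)$. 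In the non-Archimedean case, this is carried out exactly as in \cite[\S 6.3]{LiLNM}: one performs Igusa-style resolution for the pair $(X,\partial X)$ using toroidal embeddings of $X^+$ compatible with those of $Y^+$, furnished by \cite{KK16}, and obtains the decomposition into boundary contributions together with rational continuation. Hypothesis \ref{hyp:LFE} is invoked precisely to control the singularities along each boundary stratum of $\partial X$ and to separate their contributions, whence the matrix entries of $\gamma(\pi,\lambda,\psi)$.

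For the Archimedean case, I would follow \cite[\S 3]{Li19}, reducing the abstract functional equation to a finite-dimensional statement via Knop's Harish-Chandra homomorphism for the algebra of $G$-invariant differential operators. The key inputs are: (i) holonomicity of $K$-finite generalized matrix coefficients, which transfers from $Y^+(\R)$ to $X^+(\R)$ because $p\colon X^+\to Y^+$ is an $N$-torsor; and (ii) for each basic relative invariant $f_i$, a Capelli-type invariant differential operator $\mathscr{P}_i$ on $X$ satisfying $\mathscr{P}_i\cdot|f|^\lambda=b_i(\lambda)|f|^\lambda$ for a polynomial $b_i$, whose behaviour under $\mathcal{F}_\psi$ is controlled by a dual operator $\check{\mathscr{P}}_i$ on $\check{X}$. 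Once available, these operators force both sides of the proposed identity to satisfy the same holonomic system up to a finite-dimensional ambiguity, which collapses to $\gamma$ by finite-dimensionality of $\mathcal{N}_\pi(X^+)$ and $\mathcal{N}_\pi(\check{X}^+)$ (Proposition \ref{prop:finiteness-beyond}).

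The \emph{main obstacle} is input (ii): Knop's theory is built on the spherical $\overline{G}$-space $Y^+$, not on the non-spherical $X^+$, so one cannot simply read off $\mathscr{P}_i$ from the multiplicity-free picture. The required step is to compute the top-degree homogeneous component of the image of the Capelli operator attached to $f_i$ under Knop's Harish-Chandra map, expressed in terms of data attached to $X$ itself (for instance, the characters $\omega_i$ and the symbol on the cotangent bundle of $X^+$, rather than of $Y^+$). This is exactly the work of \S\ref{sec:invariant-theory}--\S\ref{sec:D}, on which the Archimedean half of the theorem ultimately rests; with those operators in hand, both the Archimedean and non-Archimedean existence statements follow the pattern of the spherical case of Theorem \ref{prop:LFE} almost verbatim.
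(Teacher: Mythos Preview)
Your proposal is essentially correct and follows the paper's strategy: the non-Archimedean functional equation is deduced from \cite[\S 6.3]{LiLNM} via Hypothesis \ref{hyp:LFE}, and the Archimedean case rests on the Capelli-type operators and Knop's Harish-Chandra isomorphism developed in \S\S\ref{sec:invariant-theory}--\ref{sec:D}, exactly as you identify. Two small corrections: first, the toroidal/Igusa machinery of \cite{KK16} is used only for rational continuation (Theorem \ref{prop:convergence-meromorphy-beyond}), not for the non-Archimedean functional equation itself, whose proof is a direct orbit-by-orbit analysis of equivariant distributions on $\partial X$ under Hypothesis \ref{hyp:LFE}; second, in the Archimedean case the paper needs not only holonomicity but also \emph{regularity} of the $\mathscr{D}$-modules generated by $K$-finite coefficients (pulled back from $Y^+$ via \cite[Theorem 6.1.5]{HTT08}), together with the decomposition of Proposition \ref{prop:rho-decomposition}, which you omit but which is an ingredient in \cite[\S 6]{Li19}.
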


The proofs of Theorems \ref{prop:convergence-meromorphy-beyond} and \ref{prop:LFE-beyond} will occupy \S\S\ref{sec:pf-arch}---\ref{sec:pf-nonarch}.

\section{Example: Suzuki--Wakatsuki theory}\label{sec:SW}
The results in this section are proved in \cite{Sa06,SW20}.

To begin with, let $F$ be a field of characteristic zero and consider the data
\begin{compactitem}
	\item $D$: a quaternion $F$-algebra with reduced norm $\Nrd$ and reduced trace $\Trd$;
	\item $G := D^\times \times D^\times \times \GL(2)$, viewed as a connected reductive $F$-group;
	\item $X := D \oplus D$, a $8$-dimensional $F$-vector space;
	\item $\rho: G \to \GL(X)$ is the representation defined by
	\[ (x_1, x_2) \rho(g_1, g_2, g_3) = \left( g_1^{-1} x_1 g_2, g_1^{-1} x_2 g_2 \right) \cdot g_3, \]
	where $g_3 \in \GL(2)$ acts by matrix multiplication.
\end{compactitem}

Note that $\Ker(\rho) = \left\{ (a, b, a b^{-1}) : a, b \in \Gm \right\}$, where we embed $\Gm$ as the centers of $D^\times$ and $\GL(2)$. Hence $\Ker(\rho) \simeq \Gm^2$.

Introduce the variables $v_1, v_2$ and define, for each $x = (x_1, x_2) \in X(F)$, the binary quadratic form
\[ \mathcal{F}_x(v_1, v_2) := \Nrd_{D(v_1, v_2)}\left( x_1 \otimes v_1  + x_2 \otimes v_2 \right) \]
where $\Nrd_{D(v_1, v_2)}$ is the reduced norm for the quaternion $F(v_1, v_2)$-algebra $D \dotimes{F} F(v_1, v_2)$. Then
\begin{equation*}
	\mathcal{F}_{x\rho(g)}(v) = \Nrd(g_1)^{-1} \Nrd(g_2) \cdot \mathcal{F}_x \left(v \cdot {}^t g_3 \right), \quad g = (g_1, g_2, g_3) \in G, \; v = (v_1, v_2).
\end{equation*}

Set
\begin{align*}
	\omega(g) & := \Nrd(g_1)^{-2} \Nrd(g_2)^2 \det(g_3)^2, \quad g=(g_1, g_2, g_3) \in G, \\
	P(x) & := \mathrm{disc}\left( \mathcal{F}_x(v) \right), \quad x \in X,
\end{align*}
so that $\omega \in \mathbf{X}^*(G)$ is trivial on $\Ker(\rho)$, and $P$ is a regular function on $X$ satisfying
\[ P(x\rho(g)) = \omega(g) P(x), \quad (x, g) \in X \times G. \]

\begin{remark}\label{rem:SL2-triple}
	It is sometimes to consider the action of $(SD^\times)^3 \times \Gm$ on $X$, where $SD^\times := \Ker(\Nrd) \lhd D^\times$ and $\Gm$ acts by dilation. The basic eigencharacter then becomes $(g_1, g_2, g_3, t) \mapsto t^2$. See \cite[2.3]{Sa06}.
\end{remark}

\begin{proposition}[see {\cite{Sa06, SW20}}]
	The datum $(G, \rho, X)$ above is a reductive prehomogeneous vector space such that $X^+$ is defined by $P \neq 0$ and $P$ is the unique basic relative invariant (up to $F^\times$). In particular, $X^+$ is affine and $\mathbf{X}^*_\rho(G) = \Z \omega$.
\end{proposition}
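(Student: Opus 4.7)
The plan is to reduce the statement to the well-studied prehomogeneous vector space of $2\times 2\times 2$ hypermatrices (Bhargava's cubes) by a Galois descent argument, and then verify the three sub-assertions: relative invariance of $P$, prehomogeneity, and the rank-one computation of $\mathbf{X}^*_\rho(G)$.

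First, I would verify that $P$ is a relative invariant with eigencharacter $\omega$. The transformation formula for $\mathcal{F}_{x\rho(g)}$ already given in the excerpt shows that $\mathcal{F}_{x\rho(g)}(v) = \Nrd(g_1)^{-1}\Nrd(g_2)\cdot \mathcal{F}_x(v\cdot {}^{t}g_3)$; taking discriminants in the binary variable $v=(v_1,v_2)$ multiplies by the square of the leading scalar $\Nrd(g_1)^{-1}\Nrd(g_2)$ and by $\det(g_3)^2$, yielding $P(x\rho(g)) = \omega(g) P(x)$. Next, I would exhibit a single point $x_0 \in X(F)$ at which $P(x_0) \neq 0$; for example, picking $x_1, x_2 \in D$ so that $\mathcal{F}_{x_0}(v_1,v_2)$ is a nondegenerate binary quadratic form suffices. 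This simultaneously shows that $P \not\equiv 0$ and that $\{P \neq 0\}$ is a nonempty Zariski-open $G$-stable subset of $X$.

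To establish prehomogeneity and that $X^+$ coincides with $\{P \neq 0\}$, I would pass to $\overline{F}$. There $D \dotimes{F} \overline{F} \simeq \mathrm{M}_2$, so $X_{\overline{F}} \simeq \mathrm{M}_2 \oplus \mathrm{M}_2 \simeq \overline{F}^2 \otimes \overline{F}^2 \otimes \overline{F}^2$ as a $\GL_2^3$-module (the last tensor slot corresponding to the $\GL(2)$ factor, the first two to the left and right actions of $D^\times$). This is the classical space of $2\times 2\times 2$ hypermatrices, a textbook reductive prehomogeneous vector space whose unique basic relative invariant up to scalar is Cayley's hyperdeterminant $\Delta$, a degree-$4$ irreducible polynomial cutting out an irreducible hypersurface whose complement is a single $\GL_2^3$-orbit. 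Since $P$ and $\Delta$ are both relative invariants with the same (up to $F^\times$) eigencharacter and $\Delta$ is basic, $P$ must be a scalar multiple of a power of $\Delta$; a degree count ($\deg P = 4 = \deg \Delta$) forces $P = c\Delta$ with $c \in \overline{F}^\times$. Hence $X^+ \dtimes{F}\overline{F}$ is the Zariski-open orbit $\{P \neq 0\}$, which descends to show that $X^+ = \{P \neq 0\}$ over $F$ and is affine as a basic open subset of the affine space $X$.

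Finally, to see $\mathbf{X}^*_\rho(G) = \Z\omega$ and that $P$ is the unique basic relative invariant up to $F^\times$, I invoke the general fact (recalled in \S\ref{sec:PVS}) that the basic relative invariants biject with the codimension-one irreducible components of $\partial X$. Since $\partial X_{\overline{F}} = \{\Delta = 0\}$ is irreducible (as the hyperdeterminant is irreducible) and is stable under $\Gal(\overline{F}|F)$, $\partial X$ itself is a single irreducible divisor, so there is exactly one basic relative invariant up to $F^\times$, namely $P$, with eigencharacter $\omega$. Combined with the Galois-fixed-point description $\mathbf{X}^*_\rho(G) = \mathbf{X}^*_{\rho_{\overline{F}}}(G_{\overline{F}})^{\Gal(\overline{F}|F)}$, this yields $\mathbf{X}^*_\rho(G) = \Z\omega$.

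The step most likely to require care is the identification of $P$ with (a scalar multiple of) Cayley's hyperdeterminant over $\overline{F}$, together with the fact that the hyperdeterminant is irreducible and its complement is a single orbit; I would either cite the classical treatment of Bhargava's cubes as an irreducible prehomogeneous vector space in Kac's classification, or give a short direct verification by writing out an explicit open orbit representative and checking that its infinitesimal stabilizer has the correct dimension $\dim G - \dim X = 4$ (which equals $\dim \Ker(\rho) + 2$, consistent with a two-dimensional generic stabilizer in $G/\Ker(\rho)$).
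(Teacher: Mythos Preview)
Your argument is correct. Note, however, that the paper does not supply its own proof of this proposition: it is stated with the attribution ``see \cite{Sa06, SW20}'' and no proof environment follows, so the result is simply imported from those references. Your write-up therefore cannot be compared line-by-line against anything in the paper itself.

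That said, your approach---splitting $D$ over $\overline{F}$, identifying $X_{\overline{F}}$ with the $\GL_2^3$-space $\overline{F}^2\otimes\overline{F}^2\otimes\overline{F}^2$ (exactly as in the paper's Remark on Bhargava's cubes), invoking the classical fact that Cayley's hyperdeterminant is the unique basic relative invariant there, and then descending---is precisely the standard route and is the content of the cited works (especially \cite{Sa06} in the split case). The only cosmetic point: your parenthetical dimension count at the end is superfluous once you have cited the Sato--Kimura/Kac classification for the hyperdeterminant space, and could be dropped without loss.
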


Identify $X$ with its dual by the pairing $\lrangle{(x_1, x_2), (x'_1, x'_2)} = \Trd(x_1 x'_1) + \Trd(x_2 x'_2)$. The contragredient representation becomes $\check{\rho}: G \to \GL(X)$ given by
\[ (x_1, x_2) \check{\rho}(g_1, g_2, g_3) = \left( g_2^{-1} x_1 g_1, g_2^{-1} x_2 g_1 \right) \cdot {}^t g_3^{-1}. \]

This setting is used by M.\ Suzuki and S.\ Wakatsuki \cite{SW20} in their study of toric periods on $D^\times$. It also appeared in the works of F.\ Sato \cite{Sa06} for split $D$.

\begin{remark}\label{rem:cube}
	The prehomogeneous vector space $(G, \rho, X)$ arises in Bhargava's setting \cite{Bha04} in the following way. Take the split quaternion $F$-algebra $D = \mathrm{M}_2$, so $D^\times = \GL(2)$. Recall that $\Nrd = \det$ for split $D$. Regard $F^2$ as the space of row vectors, and let $\GL(2)^3$ act on the $F$-vector space $W := F^2 \otimes F^2 \otimes F^2$ in the natural way. Expand the last tensor slot into $F \oplus F$ to obtain
	\[ W = (\underbracket{F^2 \otimes F^2}_{\text{slots 1,2}} ) \oplus (\underbracket{F^2 \otimes F^2}_{\text{slots 1,2}}). \]
	Then $\{1\} \times \{1\} \times \GL(2)$ acts by matrix multiplication $(x_1, x_2) \mapsto (x_1, x_2) \cdot g_3$, for all $x_1, x_2 \in F^2 \otimes F^2$ and $g_3 \in \GL(2)$.
	
	Now fix a non-degenerate pairing $\lrangle{\cdot, \cdot}$ on $F^2$ to get
	\begin{align*}
	F^2 \otimes F^2 & \rightiso \End_F(F^2) = \mathrm{M}_2 \\
	u \otimes v & \mapsto \lrangle{u, \cdot} v.
	\end{align*}
	Then $\GL(2) \times \GL(2) \times \{1\}$ acts on each $\mathrm{M}_2$ factor via
	\[ x = \lrangle{u, \cdot} v \xmapsto{(g_1, g_2)} \lrangle{u g_1, \cdot} (v g_2) = \lrangle{u, (\cdot) {}^\dagger g_1} (v g_2), \]
	and the last term is ${}^\dagger g_1 x g_2$ by our conventions; here $g_1 \mapsto {}^\dagger g_1$ is the transpose with respect to $\lrangle{\cdot, \cdot}$. All in all, $\GL(2)^3$ acts on $W \simeq \mathrm{M}_2 \oplus \mathrm{M}_2$ via
	\[ (x_1, x_2) \xmapsto{(g_1, g_2, g_3)} \left( {}^\dagger g_1 x_1 g_2, {}^\dagger g_1 x_2 g_2 \right) \cdot g_3. \]
	This coincides with our original setting, up to a twist $g_1 \mapsto {}^\dagger g_1^{-1}$.
	
	We may also consider the $\SL(2)^3 \times \Gm$-action on $W$, where $\Gm$ acts by dilation (see Remark \ref{rem:SL2-triple}). If $\lrangle{\cdot, \cdot}$ is the pairing corresponding to $\bigl( \begin{smallmatrix} & 1 \\ -1 & \end{smallmatrix} \bigr)$ such that the symplectic group equals $\SL(2)$, then ${}^\dagger g_1^{-1} = g_1$ for all $g_1 \in \SL(2)$. The resulting prehomogeneous vector space is therefore that of Remark \ref{rem:SL2-triple}. This prehomogeneous vector space appeared in Bhargava's work \cite{Bha04}, where its integral structure is studied in depth.
\end{remark}

Let us show that the data above fit into the framework in \S\ref{sec:beyond-spherical}.

\begin{proposition}
	The data $(G, \rho, X)$ and $N := \SL(2) \lhd G$ (embedded in the third factor) of Suzuki--Wakatsuki satisfy the requirements in Hypothesis \ref{hyp:beyond-sphericity}. Specifically, $Y^+ := X^+ /\!/ N$ is a symmetric space under $\overline{G} = D^\times \times D^\times \times \Gm$.
\end{proposition}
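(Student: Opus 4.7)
My plan is to verify the four bullets of Hypothesis~\ref{hyp:beyond-sphericity} in turn; the first three are routine, and the real work is in the fourth. For the free action of $N=\SL(2)$ on $X^+$: non-vanishing of $P(x)=\mathrm{disc}\,\mathcal F_x$ forces $x_1$ and $x_2$ to be $F$-linearly independent in $D$ (otherwise $\mathcal F_x$ is a scalar multiple of a perfect square), so any $g_3\in\SL(2)(F)$ fixing $(x_1,x_2)$ fixes a basis of the plane $Fx_1+Fx_2$ and hence equals the identity. The quotient $\GL(2)/\SL(2)\simeq\Gm$ via $\det$ identifies $\overline G=D^\times\times D^\times\times\Gm$, and $G(F)\to\overline G(F)$ is surjective because $\det\colon\GL_2(F)\to F^\times$ is; semisimplicity of $\SL(2)$ is immediate.

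For the fourth bullet I would first identify the categorical quotient explicitly. The first fundamental theorem of invariant theory, applied to $\SL(2)$ acting on $D\otimes F^2$ via the standard representation on the second factor, says that the ring of polynomial invariants is generated by the six Plücker coordinates of $x_1\wedge x_2\in\wedge^2 D$. Hence
\[ p\colon X\twoheadrightarrow Y=X/\!/\SL(2),\qquad (x_1,x_2)\mapsto x_1\wedge x_2, \]
exhibits $Y$ as the affine cone over the Grassmannian $\operatorname{Gr}(2,D)$ inside $\wedge^2 D$, and $Y^+=p(X^+)$ is the open subvariety consisting of those decomposable bivectors whose underlying $2$-plane $L=Fx_1+Fx_2\subset D$ has non-degenerate restriction $\Nrd|_L$.

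The main obstacle is then to exhibit a symmetric $\overline G$-pair structure on $Y^+$. Since the condition is geometric, I work over $\overline F$, where $D$ is split and every quadratic étale subalgebra $E\subset D$ is conjugate to $\overline F\times\overline F$. Fixing $E=F[u]$ with $u\notin F$ and taking the base point $y_0=u\wedge 1$, a direct computation using the commutativity of $E$, the identity $\Nrd|_E=N_{E/F}$, and the Skolem--Noether description $N_{D^\times}(E)=E^\times\sqcup wE^\times$ (with $w\in D^\times$ implementing the Galois involution of $E$ by conjugation) shows that the stabilizer of $y_0$ in $\overline G$ has identity component
\[ H^\circ=\{(c_1,c_2,N_{E/F}(c_1)N_{E/F}(c_2)^{-1}):c_1,c_2\in E^\times\}, \]
a four-dimensional torus; the non-identity component is contributed by $(w,w)$. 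Since $\dim\overline G-\dim Y^+=9-5=4$ matches $\dim H^\circ$, this is the full identity stabilizer and $Y^+(\overline F)$ is a single $\overline G(\overline F)$-orbit. It remains to realize $H^\circ$ as the identity component of the fixed locus of an involution $\theta$ of $\overline G$; the natural candidate is assembled from the Galois involution of $E$, its inner implementation by $w$, and an inversion on the $\Gm$-factor, with the check essentially the above computation run backwards. In practice I would shortcut this last verification by citing F.\ Sato \cite{Sa06} and Suzuki--Wakatsuki \cite{SW20}, where precisely this symmetric-space description of $Y^+$ is already worked out in the context of toric periods on $D^\times$.
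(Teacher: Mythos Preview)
Your proof is correct and follows the same skeleton as the paper's: identical arguments for the freeness of the $N$-action (linear independence of $x_1,x_2$ forced by non-degeneracy of $\mathcal{F}_x$), for the surjectivity of $G(F)\to\overline{G}(F)$ via $\det$, and for the semisimplicity of $\SL(2)$. For the symmetric-space property of $Y^+$, the paper is terser than you: it simply passes to a splitting field for $D$ and cites \cite{Sa06} and \cite{SW20} for the explicit description, without computing anything. You go further, identifying $Y$ with the affine cone over $\mathrm{Gr}(2,D)\subset\wedge^2 D$ via the first fundamental theorem for $\SL(2)$ and computing the stabilizer $H^\circ$ at $y_0=u\wedge 1$ directly; this computation is correct (and indeed $(ua)\wedge a = N_{E/F}(a)\cdot u\wedge 1$ for $a\in E$, which gives exactly your formula for $H^\circ$). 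Since you ultimately defer the involution check to the same references, the two proofs land in the same place; your extra work buys a transparent explanation of why the stabilizer is a pair of embedded tori $E^\times\times E^\times$ --- precisely the structure underlying the toric periods in \cite{SW20} --- at the cost of a longer argument. Note incidentally that the paper's Remark~\ref{rem:Y-quotient} records a different-looking model of $Y$ in the split case (pairs of binary quadratic forms with equal discriminant); your Grassmannian model is equivalent to it.
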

\begin{proof}
	The prehomogeneity has just been stated. The map $G(F) \to \overline{G}(F)$ is surjective since $\det: \SL(2, F) \to F^\times$ is. To show that $Y^+$ is a symmetric space, one can pass to an extension of $F$ to ensure that $D$ is split, and then apply the explicit descriptions in \cite[p.81]{Sa06} or \cite{SW20}.
	
	To show that $N$ acts freely on $X^+$, consider $(x_1, x_2) \in X^+(F)$. Hence $x_1$, $x_2$ are $F$-linearly independent in $D$, otherwise $\mathcal{F}_x(v_1, v_2)$ would be degenerate. Therefore, for all $g_3 \in \GL(2, F)$,
	\[ (x_1, x_2) \cdot g_3 = (x_1, x_2) \iff g_3 = 1. \]
	This implies the freeness of $N$-action since $F$ can be replaced by any field extension.
\end{proof}

In order to apply the Theorem \ref{prop:LFE-beyond} when $F$ is a non-Archimedean local field, we show the following
\begin{proposition}
	Our triplet $(G, \rho, X)$ satisfies Hypothesis \ref{hyp:LFE}.
\end{proposition}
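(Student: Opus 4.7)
The plan is to verify Hypothesis~\ref{hyp:LFE} orbit by orbit on $\partial X(F)$. Since $\partial X = \{P = 0\}$ is irreducible, one first classifies the $G(F)$-orbits according to the degeneracy type of the pair $(y_1, y_2) \in D \oplus D$: either $y = 0$, or $\mathrm{span}(y_1, y_2)$ is one-dimensional, or $y_1, y_2$ are linearly independent while the binary form $\mathcal{F}_y$ is degenerate. The third case only arises when $D$ is split, since the anisotropy of $\Nrd$ forces linear dependence otherwise. For each representative $y$ I would compute $H := \Stab_G(y)$ from the action formula $y\rho(g_1,g_2,g_3) = (g_1^{-1}(ay_1+cy_2)g_2,\ g_1^{-1}(by_1+dy_2)g_2)$ and exhibit a parabolic $P \subset G$ with $U_P \subset H \subset P$ such that the basic eigencharacter $\omega = \Nrd(g_1)^{-2}\Nrd(g_2)^2 \det(g_3)^2$ restricts non-trivially to $(Z_M \cap H_M)(F)$, with $M := P/U_P$.

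For $y = 0$ the choice $P = G$ works: the centre of the third $\GL(2)$-factor, $\{(1,1,tI_2) : t \in F^\times\} \subset Z_G$, contributes $\omega(1,1,tI_2) = t^4$, hence a one-parameter family of non-trivial characters. For $y = (u, 0)$ with $u \in D^\times$, a direct stabilizer computation shows that $H$ contains the unipotent subgroup $\{1\} \times \{1\} \times \{\twomatrix{1}{0}{c}{1} : c \in F\}$, so $P := D^\times \times D^\times \times B^-$ (with $B^-$ the lower-triangular Borel in the third factor) fulfils the hypothesis; the restriction of $\omega$ to $(Z_M \cap H_M)(F)$ works out to $(a,d) \mapsto a^{-2}d^2$ on the diagonal torus of $\GL(2)$, visibly non-trivial. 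These two cases exhaust $\partial X(F)$ when $D$ is anisotropic, so the proof is complete in that case.

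When $D$ is split one encounters further orbit representatives, most importantly $y = (E_{11}, 0)$, $y = (E_{11}, E)$, and the generic orbit through $y = (I, E)$ where $E = \twomatrix{0}{1}{0}{0}$. For each I would compute $H$ in closed form from the action equations and select $P$ as a product of Borels compatible with the containments $g_1 \in B^+$, $g_2 \in B^\pm$, $g_3 \in B^\mp$ forced by the stabilizer equations. I expect the generic orbit $y = (I, E)$ to be the principal obstacle: $H$ has dimension $5$, and its unipotent radical $\{(\twomatrix{1}{\beta+c}{0}{1}, \twomatrix{1}{\beta}{0}{1}, \twomatrix{1}{0}{c}{1}) : \beta, c \in F\}$ sits diagonally inside $U^+ \times U^+ \times U^-$ rather than as a direct product, hence is not literally the unipotent radical of any standard parabolic of $\GL(2)^3$. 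Overcoming this is the main technical difficulty: one must find a parabolic $P$ whose unipotent radical embeds into $H$ while $Z_M \cap H_M$ remains strictly larger than $\Ker(\rho)$ so that $\omega$ does not collapse to the trivial character. I would address this either by a careful ad hoc choice of $P$ adapted to the stabilizer of the rank-one form $\mathcal{F}_y = v_1^2$ in the third $\GL(2)$-factor, or by inheriting the verification from the parallel analyses of the same prehomogeneous vector space in F.~Sato~\cite{Sa06} and Suzuki--Wakatsuki~\cite{SW20}.
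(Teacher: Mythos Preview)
Your approach coincides with the paper's: enumerate the $G(F)$-orbits in $(\partial X)(F)$, compute the stabilizer $H$ of each representative, and exhibit a suitable parabolic $P$. Your treatment of the anisotropic-$D$ case is correct and matches the paper's (two orbits).

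For split $D$ there are six orbits; the paper represents them by $(0,0)$, $(0,E_{12})$, $(0,1)$, $(E_{11},E_{12})$, $(E_{12},E_{22})$, $(E_{12},1)$, and your enumeration misses one of the two orbits with $\mathcal{F}_y\equiv 0$ and $\dim\operatorname{span}(y_1,y_2)=2$. A device you do not exploit: the paper observes that $(0,1)$, $(E_{11},E_{12})$ and $(E_{12},E_{22})$ are permuted by the $S_3$-symmetry of the Bhargava cube $F^2\otimes F^2\otimes F^2$ (Remark~\ref{rem:cube}), so a single computation with $P=D^\times\times D^\times\times\bigl(\begin{smallmatrix}*&*\\&*\end{smallmatrix}\bigr)$ covers all three.

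You correctly isolate the generic boundary orbit as the crux, and your proposal leaves it unfinished. The paper chooses the representative $(E_{12},1)$ --- your $(I,E_{12})$ after swapping slots via $g_3$ --- and records $P=B$ with $\omega^\flat=(c_1/c_3)^2$. Your observation that the unipotent radical of $H$ is only two-dimensional is correct and is \emph{not} an artifact of your choice of representative: since $\dim U_B=3$, the containment $U_B\subset H$ required by Hypothesis~\ref{hyp:LFE} is not automatic from the paper's table either (for instance $\bigl(\bigl(\begin{smallmatrix}1&1\\0&1\end{smallmatrix}\bigr),1,1\bigr)$ visibly moves $(E_{12},1)$). So your hesitation is well-founded rather than a bookkeeping issue, and deferring to \cite{Sa06,SW20} will not help since those references do not address Hypothesis~\ref{hyp:LFE}. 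You should carry this computation through yourself and, if the literal containment fails for every parabolic, examine the precise form of the condition in \cite[\S 6.3]{LiLNM} that is actually invoked in the proof of the functional equation.
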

\begin{proof}
	We use the description in \cite{SW20} of stabilizers of points of $(\partial X)(F)$. Let us begin with the case of split $D$. For $1 \leq i, j \leq 2$, write
	\[ E_{ij} = \begin{tikzpicture}[baseline]
		\matrix (M) [matrix of math nodes, left delimiter=(, right delimiter=)] {
			& \vdots & \\
			\cdots & 1 & \cdots \\
			& \vdots & \\
		};
		\node[right=2.5em] at (M-2-3) {row $i$};
		\node[below=1em] at (M-3-2) {column $j$ };
	\end{tikzpicture} \]
	and define the following parabolic subgroups of $G$
	\begin{align*}
		B & := \twomatrix{*}{*}{}{*} \times \twomatrix{*}{*}{}{*} \times \twomatrix{*}{*}{}{*} , \\
		Q & := D^\times \times D^\times \times \twomatrix{*}{*}{}{*}.
	\end{align*}
	
	An explicit set of representatives $y$ for the $G(F)$-orbits in $(\partial X)(F)$, their stabilizers $H := \Stab_G(y)$ and the parabolic subgroup $P$ required in Hypothesis \ref{hyp:LFE} are tabulated in Table \ref{tab:SW}.
	\begin{table}[h]\centering
		\[\def\arraystretch{1.2}\begin{array}{c|c|c}
			y & H & P \\[2pt] \hline & & \\
			(0, E_{12}) & \left\{ \left( \twomatrix{a}{*}{}{*}, \twomatrix{*}{*}{}{b}, \twomatrix{*}{*}{}{c} \right)  : a = bc \right\} & B \\
			(0, 1) & \left\{ \left( g, h, \twomatrix{*}{*}{}{c} \right)  : g = c \cdot h \right\} & Q \\
			(E_{11}, E_{12}) & \simeq \text{to above} & \simeq Q \\
			(E_{12}, E_{22}) & \simeq \text{to above} & \simeq Q \\
			(E_{12}, 1) & \left\{ \left( \twomatrix{a_1}{a_2}{}{a_3}, \twomatrix{a_1}{a_2}{}{a_3}  \twomatrix{\frac{1}{c_3}}{\frac{-c_2}{c_1 c_3}}{}{\frac{1}{c_3}}, \twomatrix{c_1}{c_2}{}{c_3} \right)  : a_1 c_3 = a_3 c_1 \right\} & B \\
			(0, 0) & G & G
		\end{array}\]
		\caption{Representatives of $G(F)$-orbits in $(\partial X)(F)$}
		\label{tab:SW}
	\end{table}
	
	Specifically, the isomorphisms of $H$ and $P$ among the rows with $y = (0, 1)$, $(E_{11}, E_{12})$ or $(E_{12}, E_{22})$ are realized as follows. Identify $X$ with $(F^2)^{\otimes 3}$ as in Remark \ref{rem:cube}. The three $G(F)$-orbits are then related by permuting the tensor slots of $(F^2)^{\otimes 3}$ and the factors of $G = \GL(2)^3$ accordingly. The author is indebted to S.\ Wakatsuki for this observation.

	In each case we pose $M := P/U_P$, $H_M := H/U_P$, and let $\omega^\flat$ be the character on $Z_M \cap H_M$ induced by $\omega$. Then
	\[\def\arraystretch{1.2}\begin{array}{c|c|c}
		y & Z_M \cap H_M & \omega^\flat \\[2pt] \hline & & \\
		(0, E_{12}) & \left\{ (a,a',b',b,c',c) : a = bc \right\} & \left( \frac{b'c'}{a'} \right)^2 \\
		(0, 1) & \left\{ (u,v,c',c) : u=cv \right\} & \left( \frac{c'}{c} \right)^2 \\
		(E_{12}, 1) & \left\{ (a_1, a_3, c_1, c_3) : a_1 c_3 = a_3 c_1 \right\} & \left(\frac{c_1}{c_3}\right)^2 \\
		(0, 0) & \left\{ (u,v,w) \in \Gm^3 \right\} & \left(\frac{vw}{u}\right)^4
	\end{array}\]
	where $a, a'$, $u, v$, etc.\ denote elements of $\Gm$. Note that we omitted the cases of $(E_{11}, E_{12})$ and $(E_{12}, 1)$ because they can be obtained from the case of $y = (0, 1)$, by applying some automorphisms of $G$.

	Thus in each case, $Z_M \cap H_M$ is a split $F$-torus, and the characters $|\omega^\flat|^\lambda$ (for $\lambda \in \CC$) form a complex torus of positive dimension. This verifies Hypothesis \ref{hyp:LFE}.

	When $D$ is a quaternion division $F$-algebra, there are only two $G(F)$-orbits in $(\partial X)(F)$, represented by $y = (0, 1)$ and $y = (0, 0)$. The calculations above still validate the Hypothesis \ref{hyp:LFE}.
\end{proof}

\begin{remark}\label{rem:Y-quotient}
	In the case of split $D$, the categorical quotient $Y := X /\!/ N$ is described explicitly in \cite[pp.80--81]{Sa06}: it is isomorphic to the space of pairs $(y_1, y_2)$ of symmetric bilinear forms on $F^2$ such that $\mathrm{disc}(y_1) = \mathrm{disc}(y_2)$. The categorical quotient $Y^+$ is isomorphic to its open subset defined by $\mathrm{disc}(y_1) \neq 0$. Observe that $Y$ is conic, thus non-smooth.
\end{remark}

Finally, we sketch the global zeta integral which is the main concern of \cite{SW20}. Let $F$ be a number field and set $H := G/\Ker(\rho)$. Let $\phi_i$ be an $L^2$-automorphic form on $D^\times(\A_F)$ with trivial central character ($i=1,2$). Let $\Schw(X(\A_F))$ be the adélic Schwartz--Bruhat space on $X$; half-densities can be avoided in the global case by choosing any $F$-rational algebraic volume form on $X$. The zeta integrals in question is
\[ Z_\lambda(\phi_1, \phi_2, \xi) := \int_{H(F) \backslash H(\A_F)} \phi_1(g_1) \phi_2(g_2) \theta_\xi(h) |\omega|^\lambda \dd h \]
where
\begin{compactitem}
	\item $\xi \in \Schw(X(\A_F))$ and $\lambda \in \CC$ satisfies $\Re(\lambda) \gg 0$;
	\item we take $(g_1, g_2, g_3)$ to be any representative of $h$ in $G(\A_F)$;
	\item $\dd h$ is a Haar measure on $H(\A_F)$ (eg.\ the Tamagawa measure);
	\item $\theta_\xi(h) := \sum_{x \in X^+(F)} \xi(x \rho(h))$ is the ``$\theta$-function'' attached to $\xi$.
\end{compactitem}
For the dual version $\check{Z}_\lambda(\phi_1, \phi_2, \xi)$ associated with $\check{\rho}$, one replaces $|\omega|^\lambda$ by $|\omega|^{-\lambda}$ and $\theta_\xi$ by $\check{\theta}_\xi: h \mapsto \sum_{x \in X^+(F)} \xi(x \check{\rho}(h))$.

We refer to \cite{SW20} for further properties of $Z_\lambda$ and $\check{Z}_\lambda$, such as the convergence when $\Re(\lambda) \gg 0$.

Also note that $\phi_1 \otimes \phi_2 \otimes |\omega|^\lambda$ factors into an automorphic form on $\overline{G}(\A_F)$. By assuming that the form on $\overline{G}(\A_F)$ is cuspidal, all these constructions are compatible with the framework set up in \cite[Chapter 8]{LiLNM}, provided that we go beyond the spherical setting by allowing the situation of Hypothesis \ref{hyp:beyond-sphericity}.

\section{Some invariant theory}\label{sec:invariant-theory}
This section serves as a preparation for the proof of Theorem \ref{prop:LFE-beyond} in the Archimedean case. The following arguments are adapted from \cite{Kn98}. Throughout this section, we consider a triplet $(G, \rho, X)$ and $N \lhd G$ subject to Hypothesis \ref{hyp:beyond-sphericity}, with $F = \CC$. Define $Y := X /\!/ N$, so that $Y^+$ is the open $\overline{G}$-orbit in $Y$.

Fix a Borel pair $B \supset T$ for $G$ whose image in $\overline{G}$ is also a Borel pair $\overline{B} \supset \overline{T}$. Denote by $\mathbf{X}^*(\overline{T})^+ \subset \mathbf{X}^*(\overline{T})$ the subset dominant weights with respect to $\overline{B}$. In what follows, the highest/lowest weight vectors are taken relative to $\overline{B}$ or $B$.

Denote the $G$-action on $\CC[X]$ as $f \mapsto {}^g f$. Define the $\overline{G}$-module
\[ \mathcal{P} := \CC[X]^N = \CC[Y]. \]
Since $Y^+$ is spherical, $\mathcal{P}$ is multiplicity-free. Specifically, there is a submonoid $\Lambda(Y)^+ \subset \mathbf{X}^*(\overline{T})^+$ such that
\begin{equation}\label{eqn:PD-decomp}
	\mathcal{P} = \bigoplus_{\lambda \in \Lambda(Y)^+} \mathcal{P}_\lambda \quad \text{as $\overline{G}$-modules},
\end{equation}
where $\mathcal{P}_\lambda$ is the simple $\overline{G}$-module with lowest weight $-\lambda$.

\begin{proposition}[see {\cite[3.2]{Kn98}}]\label{prop:Lambda-Y}
	There exist linearly independents weights $\lambda_1, \ldots, \lambda_s \in \mathbf{X}^*(\overline{T})$ such that $\Lambda(Y)^+ = \sum_{i=1}^s \Z_{\geq 0} \lambda_i$.
	
	For each $i$, let $f_i \in \mathcal{P}_{\lambda_i}$ be a highest weight vector. Then every highest weight vector in $\mathcal{P}$ is proportional to $f_1^{a_1} \cdots f_s^{a_s}$ for some $(a_1, \ldots, a_s) \in \Z_{\geq 0}^s$.
\end{proposition}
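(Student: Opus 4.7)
The plan is to derive the structure of $\Lambda(Y)^+$ from the interplay between unique factorization in $\mathcal{P} = \CC[Y]$ and the multiplicity-free decomposition \eqref{eqn:PD-decomp}. First I would observe that $\mathcal{P}$ is factorial: $\CC[X]$ is a polynomial ring and hence a UFD, and since $N$ is connected semisimple it has no nontrivial rational characters, so every $N$-semi-invariant in $\CC[X]$ is in fact $N$-invariant; a standard criterion in invariant theory (comparing divisor class groups via characters of $N$) then gives that $\CC[X]^N = \mathcal{P}$ is also a UFD.

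The next step is to identify the highest weight vectors in $\mathcal{P}$ with the nonzero $\overline{B}$-semi-invariants, and to note that by \eqref{eqn:PD-decomp} such a semi-invariant is determined up to a scalar by its weight. A product of two $\overline{B}$-semi-invariants is again a $\overline{B}$-semi-invariant with weight the sum; conversely, if $g = fh$ in $\mathcal{P}$ with $f$ and $g$ being $\overline{B}$-semi-invariants of weights $\lambda$ and $\mu$, then the equality $\mu(b) fh = \lambda(b) f \cdot (b \cdot h)$ for $b \in \overline{B}$, combined with $\mathcal{P}$ being a domain, forces $h$ to be a $\overline{B}$-semi-invariant of weight $\mu - \lambda$. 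Hence the property of being a $\overline{B}$-semi-invariant passes to divisors, and the factoriality of $\mathcal{P}$ yields that every $\overline{B}$-semi-invariant factors uniquely (up to scalars and reordering) into prime $\overline{B}$-semi-invariants.

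Let $\{f_i\}_{i \in I}$ be a family of representatives, up to scalar, of the prime $\overline{B}$-semi-invariants in $\mathcal{P}$, with weights $\lambda_i \in \mathbf{X}^*(\overline{T})$. To prove the linear independence of $\{\lambda_i\}_{i \in I}$, I would suppose a nontrivial integer relation $\sum n_i \lambda_i = 0$ and split it as $\sum_{n_i > 0} n_i \lambda_i = \sum_{n_i < 0} (-n_i) \lambda_i$. By multiplicity-freeness, the two $\overline{B}$-semi-invariants $\prod_{n_i > 0} f_i^{n_i}$ and $\prod_{n_i < 0} f_i^{-n_i}$ would then be proportional, contradicting the uniqueness of factorization into distinct primes. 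Consequently $|I| \leq \dim \overline{T}$ is finite, and relabeling $I = \{1, \ldots, s\}$ gives the first assertion $\Lambda(Y)^+ = \sum_{i=1}^s \Z_{\geq 0} \lambda_i$. The second assertion is then immediate: any highest weight vector is a $\overline{B}$-semi-invariant and thus factors as $c \prod_i f_i^{a_i}$ with $c \in \CC^\times$ and $a_i \in \Z_{\geq 0}$, so it is proportional to $\prod_i f_i^{a_i}$.

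The main obstacle is the factoriality of $\mathcal{P}$, which is a genuine geometric input about the categorical quotient by a semisimple group; once it is granted, the remaining steps are formal consequences of unique factorization combined with the multiplicity-freeness supplied by the sphericity of $Y^+$.
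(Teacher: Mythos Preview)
Your approach is correct in spirit but differs from the paper's, and one step is not fully justified as written.

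The paper factors highest weight vectors directly in the polynomial ring $\CC[X]$ rather than in $\mathcal{P} = \CC[X]^N$. Given $f_\lambda \in \mathcal{P}_\lambda$, it writes $f_\lambda = f_1 \cdots f_d$ with each $f_i$ irreducible in $\CC[X]$, then uses the connectedness of $B$ and of $N$ to show that each $f_i$ is both a $B$-semi-invariant and an $N$-semi-invariant; semisimplicity of $N$ forces $f_i \in \mathcal{P}$. This sidesteps the factoriality of $\mathcal{P}$ entirely, working only with unique factorization in $\CC[X]$, which is free. Your route instead invokes the invariant-theoretic UFD criterion for $\CC[X]^N$ (valid here since $N$ is connected with no characters), trading one nontrivial input for another. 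Both are legitimate; the paper's is slightly more self-contained.

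The gap in your argument is the sentence ``the property of being a $\overline{B}$-semi-invariant passes to divisors.'' What you actually proved is: if $g = fh$ with $g$ \emph{and} $f$ semi-invariant, then so is $h$. That does not show that an arbitrary prime factor of a semi-invariant $g$ is itself semi-invariant. To close this you need precisely the connectedness argument the paper uses: writing $g = p_1 \cdots p_d$ into primes and acting by $b \in \overline{B}$, unique factorization gives that $\overline{B}$ permutes the classes $\CC^\times p_i$, and connectedness of $\overline{B}$ forces the permutation to be trivial, so each $p_i$ is a $\overline{B}$-semi-invariant. Once this is inserted, the remainder of your proof (linear independence via multiplicity-freeness and the contradiction from two distinct prime factorizations) goes through and matches the paper's conclusion.
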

\begin{proof}
	For every $\lambda \in \Lambda(Y)^+$, choose a highest weight vector $f_\lambda \in \mathcal{P}_\lambda$, which is unique up to $\CC^\times$. Let $\lambda_1, \lambda_2, \ldots \in \Lambda$ be such that $f_{\lambda_i}$ is an irreducible polynomial for all $i$. Given $\lambda \in \Lambda$, factorize $f_\lambda$ into irreducibles in $\CC[X]$ as $f_1 \cdots f_d$. For every $b \in B$, we obtain ${}^b f_\lambda = {}^b f_1 \cdots {}^b f_d$. Since ${}^b f_\lambda \in \CC^\times f_\lambda$, the $B$-action permutes $f_1, \ldots, f_d$ up to $\CC^\times$. By connectedness, we conclude that every $f_i$ is a relative $B$-invariant. Similarly, they are relative $N$-invariants, thus $N$-invariant. Hence $f_1, \ldots, f_d$ are highest weight vectors in $\mathcal{P}$.
	
	It follows that $\lambda_1, \lambda_2, \ldots$ span $\Lambda(Y)^+$. Moreover, distinct monomials in $f_{\lambda_1}, f_{\lambda_2}, \ldots$ must have different weights, otherwise we will get multiplicities in $\mathcal{P}$, in view of the unique factorization in $\CC[X]$. Hence $\lambda_1, \lambda_2, \ldots$ are $\Z$-linearly independent in $\mathbf{X}^*(\overline{T})$, thus finite.
\end{proof}

In view of the linear reductivity of $N$, the contragredient $\overline{G}$-module of $\mathcal{P}$ is
\[ \mathcal{D} := \CC[\check{X}]^N = \bigoplus_{\lambda \in \Lambda(Y)^+} \mathcal{D}_\lambda, \]
where $\mathcal{D}_\lambda$ is the simple $\overline{G}$-module of highest weight $\lambda$. Hence
\[ \left( \mathcal{P} \otimes \mathcal{D} \right)^{\overline{G}} = \bigoplus_{\lambda \in \Lambda(Y)^+} \left( \mathcal{P}_\lambda \otimes \mathcal{D}_\lambda \right)^{\overline{G}}. \]

Let $\tilde{B} \subset G$ be the preimage of $\overline{B} \subset \overline{G}$, so that $\tilde{B} \supset B$. Define
\[ \mathfrak{a}^* := (\bigoplus_{i=1}^s \Z \lambda_i) \otimes \CC = \bigoplus_{i=1}^s \CC \lambda_i. \]

Let $X_0 \subset X$ be the preimage of the open $\overline{B}$-orbit $Y_0$ in $Y$. Every highest weight vector $f \in \mathcal{P}$ is invertible on $X_0$. The morphism
\begin{equation*}
	\phi_f = f^{-1} \dd f : X_0 \to \check{X}
\end{equation*}
is $\tilde{B}$-equivariant and depends only on the weight of $f$. More generally, for $\chi = \sum_{i=1}^s a_i \lambda_i \in \mathfrak{a}^*$, we define the $\tilde{B}$-equivariant morphism
\[ \phi_\chi := \sum_{i=1}^s a_i \phi_{f_i}. \]

\begin{lemma}\label{prop:regularity}
	For $\lambda \in \Lambda(Y)^+$ in general position, any highest weight vector $f \in \mathcal{P}_\lambda$ is a non-degenerate relative invariant for the prehomogeneous vector space $(\tilde{B}, \rho|_{\tilde{B}}, X)$. Consequently, $\phi_f(X_0) \subset \check{X}^+$.
\end{lemma}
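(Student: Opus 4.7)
The plan is to exhibit a distinguished $\chi_0 \in \Lambda(Y)^+$ for which $\phi_{\chi_0}$ is already dominant, and then propagate non-degeneracy to $\phi_f$ for generic $\lambda$ by a Zariski-openness argument on the parameter space $\mathfrak{a}^*$.

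I would first verify that $f$ is a relative invariant for $(\tilde{B}, \rho|_{\tilde{B}}, X)$ and that this triplet is prehomogeneous with open orbit $X_0$. Since $f \in \mathcal{P} = \CC[X]^N$ is a $\overline{B}$-highest weight vector of weight $\lambda$, and the $\tilde{B}$-action on $\mathcal{P}$ factors through $\overline{B}$, $f$ is automatically a $\tilde{B}$-eigenvector, with eigencharacter pulled back along $\tilde{B} \twoheadrightarrow \overline{B}$. The subgroup $\tilde{B}$ contains $N$ and surjects onto $\overline{B}$; combined with the transitivity of $\overline{B}$ on $Y_0$ and the $N$-torsor structure of $p: X^+ \to Y^+$ (Lemma \ref{prop:N-torsor}), this forces $\tilde{B}$ to act transitively on $X_0 = p^{-1}(Y_0) \subset X^+$.

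To produce $\chi_0$: by Proposition \ref{prop:PVS-generality}, there exists a non-degenerate relative $G$-invariant $F_0 \in \CC[X]$ with eigencharacter $(\det\rho)^2$. Since $N$ is semisimple it has no nontrivial characters, so $F_0$ is $N$-invariant and lies in $\mathcal{P}$. Being a $B$-eigenvector in $\mathcal{P}$, Proposition \ref{prop:Lambda-Y} forces $F_0 = c \cdot f_1^{a_1} \cdots f_s^{a_s}$ up to scalar; setting $\chi_0 := \sum_i a_i \lambda_i \in \Lambda(Y)^+$, logarithmic differentiation gives $\phi_{F_0} = \phi_{\chi_0}$, which is dominant because $F_0$ is non-degenerate.

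The remaining step is a rank computation. Fix $x_0 \in X_0$; the differential $d\phi_\chi|_{x_0}: X \to \check{X}$ depends $\CC$-linearly on $\chi \in \mathfrak{a}^* = \bigoplus_i \CC\lambda_i$, so $P(\chi) := \det\left( d\phi_\chi|_{x_0} \right)$ is a polynomial in $\chi$. The $\tilde{B}$-equivariance $\phi_\chi(xb) = \phi_\chi(x)\check{\rho}(b)$ intertwines $d\phi_\chi|_x$ with $d\phi_\chi|_{xb}$ via the linear isomorphisms $\rho(b)$, $\check{\rho}(b)$, so the set where $d\phi_{\chi_0}$ is invertible is $\tilde{B}$-stable; being non-empty Zariski-open in $X^+$ (by dominance of $\phi_{\chi_0}$ between varieties of equal dimension) it meets the orbit $X_0$, hence contains all of $X_0$ by transitivity, giving $P(\chi_0) \neq 0$. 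Therefore $\mathcal{U} := \{\chi : P(\chi) \neq 0\}$ is a non-empty Zariski-open subset of $\mathfrak{a}^*$. For $\lambda \in \Lambda(Y)^+ \cap \mathcal{U}$, which is the intended meaning of ``general position'', $\phi_f = \phi_\lambda$ is dominant, so $f$ is non-degenerate; its image $\phi_f(X_0)$ is a single $\tilde{B}$-orbit of dimension $\dim\check{X}$, hence Zariski-dense in $\check{X}$, and since it meets the open $G$-orbit $\check{X}^+$ and $\check{X}^+$ is $\tilde{B}$-stable, one concludes $\phi_f(X_0) \subset \check{X}^+$.

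The only delicate point I expect is the geometric bookkeeping around $X_0$ and the $N$-torsor structure --- in particular, that $X_0 \subset X^+$ so that $f$ is invertible on $X_0$ and $\tilde{B}$ acts transitively there --- but this is routine given the freeness of the $N$-action on $X^+$ and Lemma \ref{prop:N-torsor}; the substance of the argument is the combination of Proposition \ref{prop:PVS-generality} (to get a non-degenerate base point $\chi_0$) with the $\CC$-linear dependence of $\phi_\chi$ on $\chi$.
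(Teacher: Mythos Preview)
Your argument is correct and follows essentially the same route as the paper: exhibit one non-degenerate relative invariant to obtain a point $\chi_0 \in \mathfrak{a}^*$ where $d\phi_{\chi_0}|_{x_0}$ is invertible, observe that invertibility is a Zariski-open polynomial condition in $\chi$ (by the linear dependence $\phi_\chi = \sum_i a_i \phi_{f_i}$), and then use $\tilde{B}$-equivariance to conclude $\phi_f(X_0) \subset \check{X}^+$. The paper's proof is terser but structurally identical.

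One minor slip: Proposition~\ref{prop:PVS-generality} asserts that $(\det\rho)^2$ is the eigencharacter of \emph{some} non-degenerate relative invariant, but does not guarantee that this particular one lies in $\CC[X]$ rather than $\CC(X)$. You do not actually need the eigencharacter to be $(\det\rho)^2$; what you need is a non-degenerate $F_0 \in \CC[X]$ that is $N$-invariant and a $B$-eigenvector, and the last bullet of Proposition~\ref{prop:PVS-generality} supplies such an $F_0$ directly (the polynomial with zero locus $\partial X$). Alternatively, even if $F_0$ is only rational, writing it as a Laurent monomial in the basic relative invariants still expresses $\phi_{F_0}$ as an integral linear combination of the $\phi_{f_i}$, placing $\chi_0$ in $\mathfrak{a}^*$ (though perhaps not in $\Lambda(Y)^+$), which is all the openness argument requires.
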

\begin{proof}
	We may assume $f = \prod_{i=1}^s f_i^{a_s}$ where $f_1, \ldots$ are as in Proposition \ref{prop:Lambda-Y}. It is known that $f$ is non-degenerate if and only if $\phi_f: X^+ \to \check{X}$ is dominant, if and only if the differential of
	\[ \phi_f = \sum_{i=1}^s a_i \phi_{f_i} \]
	at some (equivalently, any) $x \in X_0$ is an isomorphism. Given $x$, this is an open, algebraic condition in $(a_1, \ldots, a_s) \in \CC^s$.  Since $(G, \rho, X)$ is regular, there exists some $(a_1, \ldots, a_s)$ such that $f$ is non-degenerate, so the condition holds for $(a_1, \ldots, a_s)$ in general position.
	
	If $f$ is non-degenerate, then $\phi_f(X_0)$ is contained in the open $\tilde{B}$-orbit in $\check{X}$, which is in turn inside $\check{X}^+$.
\end{proof}

Let $\Delta: G \to G \times G$ be the diagonal embedding. Unless otherwise specified, subgroups of $G$ will act on $X \times \check{X}$ through $\Delta$. We also identify $X \times \check{X}$ with the cotangent bundle $T^* X$.

\begin{lemma}[cf.\ {\cite[4.1]{Kn98}}]\label{prop:moment}
	For every $x \in X_0$, define $\mathfrak{a}^*(x) := \left\{ (x, \phi_\chi(x)) : \chi \in \mathfrak{a}^* \right\} \subset X \times \check{X}$. Then
	\begin{enumerate}[(i)]
		\item $\mathfrak{a}^*(xb) = \mathfrak{a}^*(x) b$ for all $b \in \tilde{B}$;
		\item $\chi \mapsto (x, \phi_\chi(x))$ induces an isomorphism $\mathfrak{a}^* \rightiso \mathfrak{a}^*(x)$;
		\item the constructible subset $\mathfrak{a}^*(x) \cdot (\Delta(G) \cdot (N \times N))$ is dense in $X \times \check{X}$.
	\end{enumerate}
\end{lemma}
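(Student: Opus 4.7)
My plan is to deduce all three assertions from the $\tilde B$-equivariance of each $\phi_f$ already recorded above, combined with a transfer of Knop's polar decomposition for the spherical $\overline G$-variety $Y^+$ up to $X^+$ via the $N$-torsor $p: X^+ \to Y^+$ from Lemma \ref{prop:N-torsor}.

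Part (i) is immediate: for $b \in \tilde B$, the $\tilde B$-equivariance of each $\phi_{f_i}$ combined with the $\CC$-linearity $\phi_\chi = \sum_i a_i \phi_{f_i}$ gives $\phi_\chi(xb) = \phi_\chi(x) \cdot b$, whence $\mathfrak a^*(xb) = \mathfrak a^*(x) \cdot b$ under the diagonal right action on $X \times \check X$. For part (ii), $\CC$-linearity of $\chi \mapsto (x, \phi_\chi(x))$ is built into the definition of $\phi_\chi$, so only injectivity is at stake. Suppose $\phi_\chi(x) = 0$ in $\check X$ for some $x \in X_0$ and $\chi = \sum a_i \lambda_i \in \mathfrak a^*$. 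Evaluating the covector on the infinitesimal action $Z \cdot x \in X$ for $Z \in \Lie(\tilde T)$ and using $f_i(x \rho(\exp tZ)) = \lambda_i(\exp tZ) f_i(x)$ yields
\[ 0 \;=\; \phi_\chi(x)(Z \cdot x) \;=\; \sum_{i=1}^s a_i \, d\lambda_i|_e(Z) \quad \text{for every } Z \in \Lie(\tilde T). \]
Since $\lambda_1, \ldots, \lambda_s$ are $\Z$-linearly independent in $\mathbf X^*(\overline T) \hookrightarrow \mathbf X^*(\tilde T)$, their differentials at the identity are $\CC$-linearly independent in $\Lie(\tilde T)^*$, forcing $a_i = 0$ for all $i$.

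The substance of the lemma is part (iii), which I will derive from Knop's polar decomposition \cite[4.1]{Kn98} for $Y^+$. Since each $f_i$ descends to a regular function on $Y = X /\!/ N$, its differential $df_i|_x$ annihilates the vertical subspace $\mathfrak n \cdot x \subset T_x X$; hence $\mathfrak a^*(x)$ lies inside the horizontal subbundle
\[ \mathscr H := \{(x', \xi') \in X \times \check X : \xi' \in (\mathfrak n \cdot x')^\perp\}, \]
which is $\Delta(G)$-stable by normality of $N$ (a short computation using $\Ad(g)\mathfrak n = \mathfrak n$) and canonically isomorphic to $p^* T^* Y^+$. Under this identification $\mathfrak a^*(x)$ matches Knop's polar subspace $\tilde{\mathfrak a}^*(p(x))$ at $p(x) \in Y_0$; Knop's theorem says that $\overline G \cdot \tilde{\mathfrak a}^*(p(x))$ is dense in $T^* Y^+$, and the surjectivity $G(\CC) \twoheadrightarrow \overline G(\CC)$ from Hypothesis \ref{hyp:beyond-sphericity} lifts this to density of $\Delta(G) \cdot \mathfrak a^*(x)$ in $\mathscr H$. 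Since $\mathscr H$ has codimension $\dim N$ in $X \times \check X$, the missing vertical directions in the $\check X$-factor are contributed by the second copy of $N$ in $N \times N$ acting via $\check\rho$: Lemma \ref{prop:duality-beyond} ensures that $N$ acts freely on $\check X^+$, so a generic $N$-orbit in $\check X$ has dimension $\dim N$, matching the codimension of $\mathscr H$.

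The main obstacle is closing part (iii) by a transversality argument: at a generic $(x, \xi) \in \mathscr H$, one must verify that the tangent $\mathfrak n \cdot \xi$ to the vertical $N$-orbit is complementary to $(\mathfrak n \cdot x)^\perp$ inside $\check X$. This reduces to non-degeneracy of the pairing $\mathfrak n \times \mathfrak n \to \CC$, $(Z_1, Z_2) \mapsto \langle Z_1 \cdot x, \; Z_2 \cdot \xi\rangle$, a Lie-algebraic shadow of the Liouville symplectic form on $T^* X$, and it should follow from the simultaneous freeness of the $N$-action on $X^+$ and on $\check X^+$ guaranteed by Lemma \ref{prop:duality-beyond}.
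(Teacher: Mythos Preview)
Your arguments for (i) and (ii) are correct. For (ii) you give a more elementary proof than the paper does: the paper quotes \cite[p.314]{Kn94a} for the pointwise linear independence of $\phi_{f_1},\ldots,\phi_{f_s}$ over $Y_0$ and then transports this by $\tilde B$-equivariance, whereas you extract it directly by evaluating on torus tangent vectors. (A minor caveat: with the paper's conventions $f_i$ is a $T$-eigenvector of weight $-w_0\lambda_i$, not $\lambda_i$; this does not affect your linear-independence conclusion.) Your overall architecture for (iii) --- descend to $T^*Y^+$, invoke Knop's density there, then fill the missing $\dim N$ directions using the second copy of $N$ --- is exactly the paper's.

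There are, however, two genuine gaps in your treatment of (iii). First, your appeal to \cite[4.1]{Kn98} for the density of $\overline G\cdot\tilde{\mathfrak a}^*(p(x))$ in $T^*Y^+$ is not well-posed: that result is stated for a multiplicity-free \emph{linear representation}, but $Y=X/\!/N$ is generally singular and not a vector space (see Remark~\ref{rem:Y-quotient}). The paper deliberately avoids this by invoking the more general theory of \cite{Kn94a} for affine spherical varieties. Second, and more seriously, the transversality you isolate --- non-degeneracy of $(Z_1,Z_2)\mapsto\langle Z_1\cdot x,\;Z_2\cdot\xi\rangle$ on $\mathfrak n\times\mathfrak n$ --- does \emph{not} follow from simultaneous freeness alone: freeness only says the two maps $\mathfrak n\hookrightarrow X$ and $\mathfrak n\hookrightarrow\check X$ are injective, which places no constraint on how their images pair. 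The paper closes (iii) differently, without formulating a transversality condition: it uses Lemma~\ref{prop:regularity} to guarantee that for generic $\chi$ the point $\phi_\chi(x)$ lands in $\check X^+$, and then the $N$-torsor structure of $\check X^+$ (Lemmas~\ref{prop:duality-beyond} and \ref{prop:N-torsor}) supplies the extra $\dim N$ dimensions. You should replace your freeness heuristic by this input from Lemma~\ref{prop:regularity}.
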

\begin{proof}
	In contrast with the elementary proof given in \textit{loc.\ cit.} for $N = \{1\}$, we will deduce the required properties from the general theory in \cite{Kn94a}.
	
	The assertion (i) follows directly from the $\tilde{B}$-equivariance of $\phi_\chi$.
	
	Consider (ii). Firstly, $\Lambda(Y)^+$ generates the lattice of $\overline{B}$-weights in $\CC(Y^+)$ (see for example \cite[Proposition 5.14]{Ti11}); we can actually extract a $\Z$-basis as in Proposition \ref{prop:Lambda-Y}.

	Note that $\phi_\chi$ gives a section of $T^* Y_0$ for every $\chi \in \mathfrak{a}^*$. Since $Y$ is spherical, by \cite[p.314]{Kn94a} there is a nonempty open subset $Y'_0 \subset Y_0$ over which $\phi_{f_1}, \ldots, \phi_{f_s}$ are linearly independent; by the $\tilde{B}$-equivariance of $\phi_{f_i}$, we may take $Y'_0 = Y_0$. This proves (ii).

	Next, consider
	\[ \psi^*: Y_0 \times \mathfrak{a}^* \to T^* Y_0 \subset T^* Y^+, \quad (y, \chi) \mapsto (y, \phi_\chi(y)). \]
	Let $C := \psi^*(Y_0 \times \mathfrak{a}^*)$. Since $Y^+$ is affine, \cite[3.1 Lemma + 3.2 Theorem]{Kn94a} says that the constructible subset $C \cdot G$ is dense in $T^* Y^+$.
	
	Recall that $X^+ \to Y^+$ is an $N$-torsor. In particular $X^+ \dtimes{Y^+} T^* Y^+ \to T^* X^+$ is a sub-bundle. In order to obtain (iii), it suffices to show that the $\{1\} \times N$ action on $X \times \check{X}$ ``fills up the gap'' between $X^+ \dtimes{Y^+} T^*_y Y^+$ and $T^* X^+$.
	
	Indeed, for $x \in X_0$ and $\chi \in \bigoplus_{i=1}^s \Z\lambda_i$ in general position, $\phi_\chi(x) \in \check{X}^+$ by Lemma \ref{prop:regularity}. Thus the same holds for $(x, \chi) \in X_0 \times \mathfrak{a}^*$ in general position, and it remains to recall that $\check{X}^+$ is an $N$-torsor by Lemma \ref{prop:duality-beyond} and \ref{prop:N-torsor}.
\end{proof}

\begin{corollary}\label{prop:c-bar}
	For any $x \in X_0$, restriction to $\mathfrak{a}^*(x)$ defines a $\CC$-linear map $\overline{c}: (\mathcal{P} \otimes \mathcal{D})^{\overline{G}} \to \CC[\mathfrak{a}^*]$, given by $h \mapsto h|_{\mathfrak{a}^*(x)}$ composed with the $\mathfrak{a}^* \rightiso \mathfrak{a}^*(x)$ in Lemma \ref{prop:moment} (ii). It does not depend on $x$.
\end{corollary}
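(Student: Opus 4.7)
The plan is to define $\overline{c}$ pointwise as $\overline{c}_x$, verify it lands in $\CC[\mathfrak{a}^*]$, and then deduce independence of $x \in X_0$ from the $\tilde{B}$-equivariance of $\phi_\chi$ combined with the fact that $X_0$ is a single $\tilde{B}$-orbit.

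First, I would set, for $h \in (\mathcal{P} \otimes \mathcal{D})^{\overline{G}}$ viewed inside $\CC[X \times \check{X}]$ and $\chi \in \mathfrak{a}^*$,
\[ \overline{c}_x(h)(\chi) := h\bigl(x, \phi_\chi(x)\bigr). \]
Since $\phi_\chi = \sum_{i=1}^s a_i \phi_{f_i}$ depends $\CC$-linearly on $\chi = \sum_i a_i \lambda_i$, and $h$ is a polynomial on $X \times \check{X}$, the assignment $\chi \mapsto h(x, \phi_\chi(x))$ is a polynomial in $\chi$, i.e.\ an element of $\CC[\mathfrak{a}^*]$; via the isomorphism $\mathfrak{a}^* \rightiso \mathfrak{a}^*(x)$ of Lemma~\ref{prop:moment}(ii), this is precisely the restriction $h|_{\mathfrak{a}^*(x)}$ transported to $\mathfrak{a}^*$, as described in the statement. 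Linearity of $\overline{c}_x$ in $h$ is immediate.

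Second, to prove independence of $x$, note that $\mathcal{P} = \CC[X]^N$ and $\mathcal{D} = \CC[\check{X}]^N$, so the diagonal $G$-action on $\mathcal{P} \otimes \mathcal{D}$ factors through $\overline{G}$, and
\[ (\mathcal{P} \otimes \mathcal{D})^{\overline{G}} = (\mathcal{P} \otimes \mathcal{D})^{\Delta(G)} \subset \CC[X \times \check{X}]^{\Delta(G)}. \]
Combined with Lemma~\ref{prop:moment}(i), for any $b \in \tilde{B} \subset G$,
\[ \overline{c}_{xb}(h)(\chi) = h\bigl(xb, \phi_\chi(xb)\bigr) = h\bigl((x, \phi_\chi(x)) \cdot b\bigr) = h\bigl(x, \phi_\chi(x)\bigr) = \overline{c}_x(h)(\chi), \]
so $\overline{c}_{xb} = \overline{c}_x$.

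Third, it remains to observe that $X_0$ is a single $\tilde{B}$-orbit. Indeed, $Y_0$ is by definition the open $\overline{B}$-orbit in $Y$, and $X_0 = p^{-1}(Y_0)$ is an $N$-torsor over $Y_0$ by Lemma~\ref{prop:N-torsor}. Since $N = \Ker(G \to \overline{G}) \subset \tilde{B}$ (the preimage of $\overline{B}$), and $\tilde{B} \twoheadrightarrow \overline{B}$ acts transitively on $Y_0$, the group $\tilde{B}$ acts transitively on $X_0$. Hence $\overline{c}_x = \overline{c}_{x'}$ for all $x, x' \in X_0$, yielding a well-defined $\CC$-linear map $\overline{c}: (\mathcal{P} \otimes \mathcal{D})^{\overline{G}} \to \CC[\mathfrak{a}^*]$.

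The argument is essentially formal: beyond the $G$-invariance of $h$, the only geometric inputs are the $N$-torsor structure of $X^+ \to Y^+$ and the $\tilde{B}$-equivariance of $\phi_\chi$, both already recorded. I do not anticipate any serious obstacle; note that part~(iii) of Lemma~\ref{prop:moment} is not needed here and will be used later (for instance to ensure that $\overline{c}$ reflects the full structure of a Harish-Chandra-type homomorphism).
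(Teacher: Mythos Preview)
Your proof is correct and uses the same idea as the paper for the independence of $x$: both invoke Lemma~\ref{prop:moment}(i) together with the $\tilde{B}$-transitivity on $X_0$, and your extra verification that $X_0$ is a single $\tilde{B}$-orbit just spells out what the paper leaves implicit. One point to be aware of: the paper's proof also records the \emph{injectivity} of $\overline{c}$ here, deduced from the $\Delta(G)\cdot(N\times N)$-invariance of $h \in (\mathcal{P}\otimes\mathcal{D})^{\overline{G}}$ combined with Lemma~\ref{prop:moment}(iii); you explicitly defer part~(iii) to later use, whereas the paper treats it as part of this corollary's content even though it is not in the statement as written.
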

\begin{proof}
	The independence of $x$ follows from Lemma \ref{prop:moment} (i). Since $h$ is $\Delta(G) \cdot (N \times N)$-invariant (as an element of $\mathcal{P} \otimes \mathcal{D}$), the injectivity follows from Lemma \ref{prop:moment} (iii).
\end{proof}

\section{Invariant differential operators}\label{sec:D}
We conserve the notations from \S\ref{sec:invariant-theory}. The following arguments are still based on \cite{Kn98}.

Unless otherwise specified, differential operators will always mean algebraic differential operators. Our differential operators on $Y$ (possibly singular) are certain linear operators on $\CC[Y] = \mathcal{P}$, defined in the Grothendieck style as in \cite[1.1.6 (i)]{BB93}. These operators form a $\CC$-algebra $\mathcal{D}(Y)$ with $\overline{G}$-action, and restriction gives an equivariant injection $\mathcal{D}(Y) \to \mathcal{D}(Y^+)$; its image equals $\left\{ D \in \mathcal{D}(Y^+) : D\mathcal{P} \subset \mathcal{P} \right\}$.

\begin{lemma}
	We have $\mathcal{D}(Y)^{\overline{G}} \rightiso \mathcal{D}(Y^+)^{\overline{G}}$.
\end{lemma}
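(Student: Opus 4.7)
The plan is straightforward. The paragraph preceding the statement already yields that restriction $\mathcal{D}(Y) \to \mathcal{D}(Y^+)$ is injective with image $\{D \in \mathcal{D}(Y^+) : D\mathcal{P} \subset \mathcal{P}\}$, so the task reduces to showing that every $\overline{G}$-invariant differential operator on $Y^+$ stabilizes $\mathcal{P} = \CC[Y]$; surjectivity of the restriction on $\overline{G}$-invariants then follows.

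I would first observe that $Y$ is affine---it equals $\Spec \CC[X]^N$ since the reductive group $N$ acts on the affine variety $X$---so $Y^+$, being open in $Y$, is quasi-affine. Combined with the sphericity of $Y^+$ given by Hypothesis \ref{hyp:beyond-sphericity}, the Vinberg--Kimelfeld criterion implies that $\CC[Y^+]$ is multiplicity-free as a $\overline{G}$-module. Under the inclusion $\mathcal{P} \subset \CC[Y^+]$, each simple summand $\mathcal{P}_\lambda$ in the decomposition \eqref{eqn:PD-decomp} is therefore the unique $\mathcal{P}_\lambda$-isotypic component of $\CC[Y^+]$.

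Now take any $D \in \mathcal{D}(Y^+)^{\overline{G}}$; it acts $\overline{G}$-equivariantly on $\CC[Y^+]$. Then $D(\mathcal{P}_\lambda)$ is a $\overline{G}$-submodule of $\CC[Y^+]$ which, being a quotient of the simple module $\mathcal{P}_\lambda$, is either zero or isomorphic to $\mathcal{P}_\lambda$; multiplicity-freeness forces any such copy to lie inside $\mathcal{P}_\lambda \subset \mathcal{P}$. Summing over $\lambda \in \Lambda(Y)^+$ gives $D\mathcal{P} \subset \mathcal{P}$, which is exactly what is needed. The only subtle point is to ensure the multiplicity-freeness of $\CC[Y^+]$ itself, and not merely of $\mathcal{P} = \CC[Y]$; this is precisely why quasi-affineness of $Y^+$ must be invoked. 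Once this is in place the argument is a direct application of Schur's lemma inside a multiplicity-free ambient module.
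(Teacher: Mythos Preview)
Your argument is correct and is essentially the same as the paper's proof: both reduce to showing that any $D \in \mathcal{D}(Y^+)^{\overline{G}}$ preserves $\mathcal{P}$, and both deduce this from the multiplicity-freeness of $\CC[Y^+]$ together with Schur's lemma. You are more explicit than the paper in justifying why $\CC[Y^+]$ (and not merely $\mathcal{P}$) is multiplicity-free; note incidentally that $Y^+ = X^+ /\!/ N$ is in fact affine, since $X^+$ is affine and $N$ is reductive, so quasi-affineness is not the sharpest observation here, but your argument goes through regardless.
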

\begin{proof}
	Since $\CC[Y^+]$ is multiplicity-free as a $\overline{G}$-module, and $\mathcal{D}(Y^+)^{\overline{G}}$ preserves its simple summands, it follows that $\mathcal{D}(Y^+)^{\overline{G}}$ preserves the $\overline{G}$-submodule $\mathcal{P}$.
\end{proof}

\begin{definition}
	Let $\lambda \in \Lambda(Y)^+$. By multiplicity-one property of $\mathcal{P}$, every $D \in \mathcal{D}(Y)^{\overline{G}}$ acts on $\mathcal{P}_\lambda$ by a scalar, denoted as $c_D(\lambda)$.
\end{definition}

This gives a homomorphism of $\CC$-algebras
\[ \mathcal{D}(Y)^{\overline{G}} \to \left\{ \text{functions}\; \Lambda(Y)^+ \to \CC \right\} \]
mapping $D$ to $[\lambda \mapsto c_D(\lambda)]$. It is injective; in particular $\mathcal{D}(Y)^{\overline{G}}$ is commutative.

\begin{lemma}
	The map above factors as $c: \mathcal{D}(Y)^{\overline{G}} \to \CC[\mathfrak{a}^*]$.
\end{lemma}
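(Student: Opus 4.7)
The strategy is to use the explicit description of highest weight vectors in $\mathcal{P}$ provided by Proposition \ref{prop:Lambda-Y}. For $\lambda = \sum_{i=1}^{s} a_i \lambda_i \in \Lambda(Y)^+$ (with $a_i \in \Z_{\geq 0}$), I would form the highest weight vector $f_\lambda := \prod_{i=1}^s f_i^{a_i} \in \mathcal{P}_\lambda$. Since $D \in \mathcal{D}(Y)^{\overline{G}}$ commutes with the $\overline{G}$-action and, by multiplicity-freeness, preserves the simple summand $\mathcal{P}_\lambda$, Schur's lemma gives $D f_\lambda = c_D(\lambda) f_\lambda$.

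The task reduces to showing that $c_D(\lambda)$ depends polynomially on $(a_1, \ldots, a_s)$. To this end I would restrict to the open $\overline{B}$-orbit $Y_0 \subset Y$, on which each $f_i$ is invertible (so that $\phi_{f_i} = f_i^{-1} \dd f_i$ of \S\ref{sec:invariant-theory} is defined). The key computational claim is: \emph{for any algebraic differential operator $D$ on $Y_0$ of order at most $n$, the expression $D(f_\lambda)/f_\lambda$ is a polynomial in $(a_1, \ldots, a_s)$ of total degree $\leq n$ with coefficients in $\CC[Y_0]$.} This is proved by induction on the order, starting from the Leibniz-type formula for a vector field $\xi$,
\[
\xi(f_\lambda) = f_\lambda \cdot \sum_{i=1}^s a_i \frac{\xi(f_i)}{f_i},
\]
and iterating; equivalently, one writes $f_\lambda = \exp\bigl(\sum_i a_i \log f_i\bigr)$ and applies a Faà di Bruno expansion.

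Since $D f_\lambda / f_\lambda$ equals the constant $c_D(\lambda)$ on $Y_0$, this constant is necessarily a polynomial in $(a_1, \ldots, a_s) \in \Z_{\geq 0}^s$. By the $\Z$-linear independence of $\lambda_1, \ldots, \lambda_s$ in $\mathbf{X}^*(\overline{T})$ (Proposition \ref{prop:Lambda-Y}), the tuple $(a_1, \ldots, a_s)$ is nothing but a linear coordinate system on $\mathfrak{a}^* = \bigoplus_{i=1}^s \CC \lambda_i$, so $c_D$ is the restriction of a unique element of $\CC[\mathfrak{a}^*]$ to $\Lambda(Y)^+$. This yields the desired factorization $c: \mathcal{D}(Y)^{\overline{G}} \to \CC[\mathfrak{a}^*]$. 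The main obstacle is the polynomial estimate in the key claim above; it is routine but notationally tedious, and the degree bound (order of $D$) is what later allows $c(D)$ to be compared with the symbol-theoretic map $\overline{c}$ of Corollary \ref{prop:c-bar}.
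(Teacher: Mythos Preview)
Your proposal is correct and follows essentially the same route as the paper: both evaluate $c_D(\lambda)$ via the identity $c_D\bigl(\sum_i a_i\lambda_i\bigr) = D\bigl(\prod_i f_i^{a_i}\bigr)\big/\prod_i f_i^{a_i}$ on the open orbit, and then invoke the fact that this quotient is polynomial in the exponents $(a_1,\ldots,a_s)$. The only difference is that the paper outsources this last step to \cite[4.3 Lemma]{Kn98}, whereas you sketch its proof directly via the Leibniz rule and induction on the order of $D$.
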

\begin{proof}
	Same as the first part of \cite[4.4 Corollary]{Kn98}, which is based on
	\[ c_D(\sum_{i=1}^s a_i \lambda_i) = \frac{D(\prod_{i=1}^s f_i^{a_i})}{\prod_{i=1}^s f_i^{a_i}} \]
	and the general result \cite[4.3 Lemma]{Kn98}.
\end{proof}

These discussions are related to \S\ref{sec:invariant-theory} in the following way. There is an evident $G$-equivariant linear map $\CC[X] \otimes \CC[\check{X}] \to \mathcal{D}(X)$: the elements of $\CC[\check{X}]$ act as differential operators with constant coefficients, whilst those of $\CC[X]$ act as differential operators of order zero.

\begin{lemma}\label{prop:diff-op}
	The elements of $\mathcal{P} \otimes \mathcal{D}$ induce elements of $\mathcal{D}(Y)$ through their action on $\mathcal{P}$. It induces a linear map $\left( \mathcal{P} \otimes \mathcal{D} \right)^{\overline{G}} \to \mathcal{D}(Y)^{\overline{G}}$.
\end{lemma}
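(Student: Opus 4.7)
The plan is to factor the desired map through the evident $G$-equivariant morphism $\mu: \CC[X] \otimes \CC[\check X] \to \mathcal{D}(X)$ recalled in the paragraph preceding the lemma, and then restrict $N$-invariant operators on $X$ to $\mathcal{P} = \CC[X]^N$. For $h \in \mathcal{P} \otimes \mathcal{D}$, the operator $\mu(h) \in \mathcal{D}(X)$ is $N$-invariant, since both factors $\mathcal{P}$ and $\mathcal{D}$ consist of $N$-invariants and $\mu$ is $G$-equivariant. An $N$-invariant operator $D \in \mathcal{D}(X)^N$ automatically sends $\mathcal{P}$ into itself: for $f \in \mathcal{P}$ and $n \in N(\CC)$, one has $n \cdot D(f) = D(n \cdot f) = D(f)$. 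Denote the resulting endomorphism of $\mathcal{P}$ by $\bar D_h$; the assignment $h \mapsto \bar D_h$ is manifestly $\CC$-linear.

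The substantive step is to verify $\bar D_h \in \mathcal{D}(Y)$ in the Grothendieck sense of \cite[1.1.6 (i)]{BB93}. This reduces to the following purely algebraic fact: if $A \hookrightarrow B$ is an inclusion of commutative $\CC$-algebras and $D \in \mathcal{D}(B)$ satisfies $D(A) \subset A$, then $D|_A \in \mathcal{D}(A)$, with order bounded by the order of $D$. One checks this by induction on the order: a zeroth-order operator preserving $A$ is multiplication by $D(1) \in A$, which lies in $\mathcal{D}(A)$; and if $D$ has order $\leq n$, then for every $a \in A$ the commutator $[D, a]$ has order $\leq n-1$ on $B$ and still preserves $A$, so $[D|_A, a] = [D, a]|_A \in \mathcal{D}(A)$ has order $\leq n - 1$ by induction. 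Applying this to $A = \mathcal{P} \subset B = \CC[X]$ and $D = \mu(h)$ yields $\bar D_h \in \mathcal{D}(Y)$.

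For $\overline{G}$-equivariance, note that the $\overline{G}$-action on $\mathcal{P} \otimes \mathcal{D}$ is induced from the $G$-action on $\CC[X] \otimes \CC[\check X]$ through $G \twoheadrightarrow \overline{G}$, so any $h \in (\mathcal{P} \otimes \mathcal{D})^{\overline{G}}$ is in fact $G$-invariant in $\CC[X] \otimes \CC[\check X]$. By $G$-equivariance of $\mu$, the operator $\mu(h)$ lies in $\mathcal{D}(X)^G$, and its restriction $\bar D_h$ therefore commutes with the induced $\overline{G}$-action on $\mathcal{P} = \CC[Y]$, placing it in $\mathcal{D}(Y)^{\overline{G}}$. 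No real obstacle is expected: the only content beyond bookkeeping is the elementary induction above, which is standard and uses nothing about sphericity or the prehomogeneous structure.
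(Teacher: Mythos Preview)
Your proof is correct and follows essentially the same route as the paper's: the paper also invokes the Grothendieck commutator definition to pass from endomorphisms of $\CC[X]$ preserving $\mathcal{P}$ to elements of $\mathcal{D}(Y)$, and notes compatibility with $\overline{G}$-actions. Your version simply spells out the details the paper leaves implicit --- the $N$-invariance reason that $\mu(h)$ preserves $\mathcal{P}$, and the order-induction showing that a differential operator on $B$ stabilizing a subalgebra $A$ restricts to a differential operator on $A$.
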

\begin{proof}
	These elements give rise to $\CC$-linear endomorphisms of $\CC[X]$ preserving $\mathcal{P}$. According to the Grothendieck-style definition of differential operators, namely by taking commutators with the endomorphisms from $\mathcal{P} \subset \CC[X]$, we see that they restrict to differential operators on $Y$. These manipulations are compatible with $\overline{G}$-actions.
\end{proof}

In contrast with \cite{Kn98}, we say nothing about the injectivity or surjectivity of $(\mathcal{P} \otimes \mathcal{D})^{\overline{G}} \to \mathcal{D}(Y)^{\overline{G}}$. We are only interested in its image.

For a differential operator $D$ on $X$, denote by $\sigma_D \in \CC[X \times \check{X}]$ its principal symbol. Note that when $D \in (\mathcal{P} \otimes \mathcal{D})^{\overline{G}}$, so does $\sigma_D$.

\begin{corollary}[cf.\ the second part of {\cite[4.4 Corollary]{Kn98}}]\label{prop:top-term}
	For every $D \in (\mathcal{P} \otimes \mathcal{D})^{\overline{G}}$, the top homogeneous component of $c_D$ equals $\overline{c}_{\sigma_D}$ (see Corollary \ref{prop:c-bar}).
\end{corollary}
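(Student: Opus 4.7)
The plan is to compute $c_D(\lambda)$ directly by evaluating $D$ on a highest weight vector of $\mathcal{P}_\lambda$, expand the differentiations by Leibniz, and extract the top-degree component in the coordinates on $\mathfrak{a}^*$. To start, I would fix a decomposition $D = \sum_j f_j \otimes h_j$ with $f_j \in \mathcal{P}$ and $h_j \in \mathcal{D}$ homogeneous of degree $d_j$, and set $d := \max_j d_j$, so that the principal symbol is $\sigma_D = \sum_{j:\, d_j = d} f_j \otimes h_j$ viewed as an element of $\CC[X \times \check{X}]$. Write any $\lambda \in \Lambda(Y)^+$ as $\sum_{i=1}^s a_i \lambda_i$ with $a_i \in \Z_{\geq 0}$, identify it with the corresponding $\chi \in \mathfrak{a}^*$, and put $g_\lambda := \prod_i f_i^{a_i}$, which is a highest weight vector of $\mathcal{P}_\lambda$. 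Fix $x \in X_0$. Since $D$ acts on $\mathcal{P}_\lambda$ by $c_D(\lambda)$, I obtain
\[ c_D(\lambda) \;=\; \frac{D(g_\lambda)}{g_\lambda}(x) \;=\; \sum_j f_j(x)\, \frac{h_j(\partial)(g_\lambda)}{g_\lambda}(x). \]

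The crux is the following Leibniz-type formula: for any $h \in \CC[\check{X}]$ homogeneous of degree $e$,
\[ \frac{h(\partial)(g_\lambda)}{g_\lambda}(x) \;=\; h(\phi_\chi(x)) \;+\; R_e(a, x), \]
where $R_e$ is a polynomial in $a = (a_1, \ldots, a_s)$ of degree strictly less than $e$. I would verify this on monomials $h = \xi_{j_1} \cdots \xi_{j_e}$ by induction on $e$, using the basic identity $\partial_j(g_\lambda) = g_\lambda \cdot (\phi_\chi)_j$, which itself follows at once from $g_\lambda^{-1} \dd g_\lambda = \sum_i a_i\, f_i^{-1} \dd f_i = \phi_\chi$. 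Each successive differentiation either produces a new factor of $(\phi_\chi)_\bullet$, which raises the $a$-degree by one, or differentiates an existing such factor, which keeps the $a$-degree unchanged; only the first option feeds into the top-degree-in-$a$ part.

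Substituting this expansion back and collecting the contributions with $d_j = d$ yields
\[ c_D(\lambda) \;=\; \sum_{j:\, d_j = d} f_j(x)\, h_j(\phi_\chi(x)) \;+\; (\text{polynomial in } a \text{ of degree} < d). \]
The leading sum is precisely $\sigma_D(x, \phi_\chi(x))$, namely the restriction of $\sigma_D$ to $\mathfrak{a}^*(x)$ via the isomorphism $\chi \mapsto (x, \phi_\chi(x))$ of Lemma \ref{prop:moment}(ii); by definition, this equals $\overline{c}_{\sigma_D}(\chi)$. Since $c_D$ already extends to a polynomial on $\mathfrak{a}^*$ of degree at most $d$, this identifies its top homogeneous component with $\overline{c}_{\sigma_D}$, as required. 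The main obstacle will be keeping the Leibniz bookkeeping clean for vector-valued $h$; a subsidiary point worth flagging is that the intermediate expressions depend on the choice of $x \in X_0$, even though both $c_D(\lambda)$ (by $\overline{G}$-invariance of $D$) and $\overline{c}_{\sigma_D}$ (by Corollary \ref{prop:c-bar}) are independent of $x$, so the final identity will be internally consistent.
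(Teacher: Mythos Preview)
Your Leibniz computation is correct and is precisely what lies behind Knop's Lemma 4.3(b), which the paper cites rather than reproves; so the overall approach matches the paper's. There is, however, one genuine gap in your final step. You have shown that $c_D$ is a polynomial on $\mathfrak{a}^*$ of degree $\leq d$ whose degree-$d$ homogeneous part equals $\overline{c}_{\sigma_D}$. To conclude that this is the \emph{top} homogeneous component you must still rule out the possibility that $\overline{c}_{\sigma_D}$ vanishes identically while $c_D$ does not. Concretely: for $D \neq 0$ one has $\sigma_D \neq 0$ in $(\mathcal{P} \otimes \mathcal{D})^{\overline{G}}$, and one needs $\overline{c}_{\sigma_D} \neq 0$, i.e.\ that $\sigma_D$ does not vanish identically on $\mathfrak{a}^*(x)$.

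This is exactly the point the paper singles out as the ``key ingredient'': it follows from the injectivity of $\overline{c}$ established in the proof of Corollary~\ref{prop:c-bar}, which in turn rests on the density statement of Lemma~\ref{prop:moment}(iii) (the $\Delta(G)\cdot(N\times N)$-saturation of $\mathfrak{a}^*(x)$ is dense in $X\times\check{X}$, and $\sigma_D$ is invariant under that group). Add one sentence invoking this injectivity and your argument is complete.
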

\begin{proof}
	As in the cited proof, this is based on \cite[4.3 Lemma (b)]{Kn98}, applied to $X$. The key ingredient is to show that
	\[ \sigma_D ( \underbracket{a_1 \phi_{f_1}(x) + \cdots + a_s \phi_{f_s}(x)}_{\in \mathfrak{a}^*(x)} \neq 0 \]
	for $x \in X_0$ and $\sum_{i=1}^s a_i \lambda_i \in \Lambda(Y)^+$ when both are in general position. The non-vanishing follows readily from the fact that $\sigma_D \in (\mathcal{P} \otimes \mathcal{D})^{\overline{G}}$ and Lemma \ref{prop:moment}.
\end{proof}

Let $\overline{\rho}$ be the half-sum of positive roots for $\overline{T} \subset \overline{B} \subset \overline{G}$, and let $W$ be the Weyl group for $(\overline{G}, \overline{T})$. View $\overline{\rho}$ as an element of $\overline{\mathfrak{t}}^*$. Observe that $\mathfrak{a}^* \hookrightarrow \mathfrak{t}^*$ canonically, hence $\mathfrak{a}^* + \overline{\rho}$ is an affine subspace of $\mathfrak{t}^*$.

Define the injective homomorphism
\[\begin{tikzcd}[row sep=tiny]
	\mathrm{HC}: \mathcal{D}(Y)^{\overline{G}} \arrow[hookrightarrow, r] & \CC{[\mathfrak{a}^* + \overline{\rho}]} \\
	D \arrow[mapsto, r] & {\left[ \chi \mapsto c_D(\chi - \overline{\rho})\right] }
\end{tikzcd}\]

We record the following instance of Knop's Harish-Chandra isomorphism for $Y^+$. Let $\mathcal{Z}(\overline{\mathfrak{g}})$ denote the center of $U(\overline{\mathfrak{g}})$.

\begin{theorem}[F.\ Knop {\cite[4.8 Theorem]{Kn98}}; see also {\cite{Kn94b}}]
	There exists a subgroup $W_Y$ of $W$ which acts as a reflection group on $\mathfrak{a}^*$, stabilizes $\mathfrak{a}^* + \overline{\rho}$ and makes the following diagram commutative
	\[\begin{tikzcd}[column sep=small]
		\mathcal{Z}(\overline{\mathfrak{g}}) \arrow[rr, "\sim"'] \arrow[d] & & {\CC[\overline{\mathfrak{t}}^*]^W} \arrow[d] \\
		\mathcal{D}(Y^+)^{\overline{G}} \arrow[equal, r] & \mathcal{D}(Y)^{\overline{G}} \arrow[r, "\mathrm{HC}", "\sim"'] & {\CC[\mathfrak{a}^* + \overline{\rho}]^{W_Y} }
	\end{tikzcd}\]
	where the top row is the usual Harish-Chandra isomorphism, the leftmost arrow comes from $\overline{G}$-action, and the rightmost one is restriction.
\end{theorem}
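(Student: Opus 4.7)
The plan is to invoke Knop's theorem \cite[4.8]{Kn98} directly; the substantive work on my side is to verify that its hypotheses are in force and to identify how the commutative square is set up. Working over $F = \CC$ as we do throughout this section, Hypothesis \ref{hyp:beyond-sphericity} guarantees that $Y^+$ is a spherical homogeneous $\overline{G}$-space. Moreover $Y^+$ is affine: $X^+$ is affine by Proposition \ref{prop:PVS-generality}, and $X^+ \to Y^+$ is an $N$-torsor (Lemma \ref{prop:N-torsor}) with $N$ reductive, so $Y^+ = \Spec \CC[X^+]^N$ is affine. We are therefore in the setting of Knop's spherical Harish-Chandra theory.

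With these prerequisites in place, I would cite Knop's construction of the little Weyl group $W_Y \leq W$, realized as a finite reflection group on $\mathfrak{a}^*$ that stabilizes the affine subspace $\mathfrak{a}^* + \overline{\rho} \subset \overline{\mathfrak{t}}^*$, together with his identification of the image of $\mathrm{HC}$ with $\CC[\mathfrak{a}^* + \overline{\rho}]^{W_Y}$. The injectivity of $c$, already noted in the paragraphs preceding the theorem, is of course subsumed by this result, and the left-hand equality $\mathcal{D}(Y)^{\overline{G}} = \mathcal{D}(Y^+)^{\overline{G}}$ is precisely the lemma proved at the start of this section.

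For the commutativity of the square, the left vertical arrow sends $z \in \mathcal{Z}(\overline{\mathfrak{g}})$ to the $\overline{G}$-invariant differential operator on $Y^+$ produced by the infinitesimal $\overline{G}$-action on $\CC[Y^+]$, while the right vertical arrow is restriction along $\mathfrak{a}^* + \overline{\rho} \hookrightarrow \overline{\mathfrak{t}}^*$. Both routes around the square read off the same scalar: by the multiplicity-one decomposition \eqref{eqn:PD-decomp}, $z$ acts on $\mathcal{P}_\lambda$ by $c_z(\lambda) = \mathrm{HC}(z)(\lambda + \overline{\rho})$, which coincides with the value of the classical Harish-Chandra polynomial of $z$ at the infinitesimal character $\lambda + \overline{\rho}$ of $\mathcal{P}_\lambda$. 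The $W$-invariance of that polynomial then forces $W_Y$-invariance after restriction, in agreement with Knop's image computation.

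The hard part — constructing $W_Y$ as a reflection group and proving surjectivity of $\mathrm{HC}$ onto its invariants — I would not attempt to redo: it rests on Knop's polar decomposition of $T^* Y^+$ and a fine analysis of the moment map on the cotangent bundle of a spherical variety, carried out in \cite{Kn94b, Kn98}. Since $Y^+$ satisfies all the standing hypotheses there, the proof reduces to a direct invocation of those results.
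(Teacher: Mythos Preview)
Your proposal is correct and follows essentially the same approach as the paper: both defer the substantive work to Knop's results in \cite{Kn94b, Kn98}, after noting that $Y^+$ is an affine spherical homogeneous $\overline{G}$-space. The paper's proof is terser and singles out one specific checkpoint --- that $\mathcal{D}(Y^+)^{\overline{G}}$ is a polynomial algebra, supplied by \cite{Kn94b} since $Y^+$ is spherical --- as the only input needed for Knop's argument in \cite{Kn98} to carry over verbatim; you cover this implicitly through your blanket invocation of \cite{Kn94b, Kn98}, and you add a useful explicit verification of the commutativity of the square via the action on $\mathcal{P}_\lambda$.
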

\begin{proof}
	The arguments in \cite{Kn98} carry over verbatim. The only input to check is that $\mathcal{D}(Y^+)^{\overline{G}}$ is a polynomial algebra. Since $Y^+$ is spherical, the property comes from \cite{Kn94b}; in fact, we should get the same Harish-Chandra isomorphism as in \cite{Kn94b}.
\end{proof}

Note that every $\lambda \in \mathbf{X}^*_\rho(G) \otimes \CC \subset \mathfrak{a}^*$ is $W_Y$-invariant, thus the translation by $\lambda$ makes sense on $(\mathfrak{a}^* + \overline{\rho}) /\!/ W_Y$.

\begin{corollary}[Cf.\ {\cite[Proposition 5.7]{Li19}}]\label{prop:HC-twist}
	Let $D \in \mathcal{D}(Y)^{\overline{G}}$. Let $f \in \CC(X)$ be a relative invariant with eigencharacter $\lambda \in \mathbf{X}^*_\rho(G)$. For all $s \in \Z$, the differential operator $D_{f, s} := f^{-s} \circ D \circ f^s$ belongs to $\mathcal{D}(Y^+)^{\overline{G}} = \mathcal{D}(Y)^{\overline{G}}$, and
	\[ \mathrm{HC}(D_{f, s})(x) = \mathrm{HC}(D)(x - s\lambda) \]
	for all $x \in (\mathfrak{a}^* + \overline{\rho}) /\!/ W_Y$.
\end{corollary}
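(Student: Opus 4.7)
The plan is to compute the eigenvalues of $D_{f,s}$ on each simple submodule $\mathcal{P}_\mu \subset \mathcal{P}$ by testing on highest weight vectors, match them against an operator produced by Knop's Harish-Chandra isomorphism, and then use this matching to globalize $D_{f,s}$ to a genuine element of $\mathcal{D}(Y)^{\overline G}$.

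First, since $N$ is semisimple, every character of $G$ is trivial on $N$, so $f \in \CC(X)^N$ descends to a $\overline G$-semi-invariant rational function on $Y$, regular and nonvanishing on the open $\overline B$-orbit $Y_0$. Let $U \subset Y^+$ be the (automatically $\overline G$-stable) open subset where $f$ is regular and invertible. Since $f^s$ is then a unit in the structure sheaf of $U$, conjugation by it is an algebra automorphism of the differential operators on $U$, so $D_{f,s} := f^{-s} \circ D \circ f^s$ is an honest differential operator on $U$; it is $\overline G$-invariant because the eigencharacters of $f^s$ and $f^{-s}$ cancel.

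For the spectral computation, let $v \in \mathcal{P}_\mu$ be a $\overline B$-highest weight vector. Then $f^s v$ is a $\overline B$-semi-invariant rational function on $Y_0$ whose $\overline T$-weight is shifted from that of $v$ by $s\lambda$. Applying $D$ preserves this $\overline B$-eigenline and acts on it by the polynomial extension of $c_D$; using that $\lambda \in \mathbf{X}^*(G)$ is $W$-invariant (which kills the $-w_0$ ambiguity between the highest-weight parametrization of $\overline B$-semi-invariants and the lowest-weight parametrization of the $\mathcal{P}_\mu$), a short weight computation identifies this eigenvalue as $c_D(\mu - s\lambda)$. Hence
\[ D_{f,s}(v) \;=\; f^{-s} D(f^s v) \;=\; c_D(\mu - s\lambda)\, v . \]
By $\overline G$-invariance of $D_{f,s}$ on $U$ and the simplicity of $\mathcal{P}_\mu$, the same scalar acts on all of $\mathcal{P}_\mu$; in particular $D_{f,s}$ preserves $\mathcal{P}$.

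Finally, the polynomial $x \mapsto \mathrm{HC}(D)(x - s\lambda)$ is $W_Y$-invariant, since $s\lambda$ is $W_Y$-fixed (as observed immediately before the statement). Knop's Harish-Chandra isomorphism therefore produces a unique $\widetilde D \in \mathcal{D}(Y)^{\overline G}$ with $\mathrm{HC}(\widetilde D)(x) = \mathrm{HC}(D)(x - s\lambda)$, whose spectrum on each $\mathcal{P}_\mu$ is $c_D(\mu - s\lambda)$, matching that of $D_{f,s}$. Thus $D_{f,s}$ and $\widetilde D$ agree on $\mathcal{P}$, and as $\overline G$-invariant differential operators on the dense open $U$ they then coincide on $U$; consequently $D_{f,s} = \widetilde D|_{Y^+}$, from which the formula for $\mathrm{HC}(D_{f,s})$ follows. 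The delicate step is this last globalization: $D_{f,s}$ is defined a priori only on the locus where $f$ is invertible, and only the spectral matching with Knop's isomorphism promotes it to an invariant differential operator on all of $Y^+$.
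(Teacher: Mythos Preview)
Your proof is correct and follows essentially the same approach as the paper: both compute the eigenvalue of $D_{f,s}$ on the isotypic pieces $\mathcal{P}_\mu$ by tracking what happens to highest (or lowest) weight vectors, and then identify the resulting function of $\mu$ with a shift of $c_D$.

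The one point where you take an unnecessary detour is the ``globalization'' step. A relative invariant $f$ with eigencharacter in $\mathbf{X}^*_\rho(G)$ is, up to scalar, a monomial $\prod f_i^{a_i}$ in the basic relative invariants, and those $f_i$ vanish precisely on $\partial X$; hence $f$ is already regular and invertible on all of $X^+$, and after descent on all of $Y^+$. So your open set $U$ equals $Y^+$ from the outset, and $D_{f,s} \in \mathcal{D}(Y^+)^{\overline G} = \mathcal{D}(Y)^{\overline G}$ is immediate, with no need to manufacture $\widetilde D$ via the Harish-Chandra isomorphism and match spectra. The paper's proof accordingly skips straight to the eigenvalue computation; its only extra wrinkle is to take $\mu$ ``sufficiently positive'' so that $f^s \cdot \mathcal{P}_\mu$ lands in some $\mathcal{P}_{\mu'}$ with $\mu' \in \Lambda(Y)^+$, thereby staying within the original definition of $c_D$ rather than invoking its polynomial extension as you do. Both routes are equivalent and short.
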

\begin{proof}
	Same as in \textit{loc.\ cit.}, namely by inspecting the effect of $D_{f, s}$ on $\mathcal{P}_\mu$, for $\mu \in \Lambda(Y)^+$ that are sufficiently positive.
\end{proof}

\section{Proofs in the Archimedean case}\label{sec:pf-arch}
Let $(G, \rho, X)$ and $N \lhd G$ be as in Hypothesis \ref{hyp:beyond-sphericity}. We are going to establish the theorems in \S\ref{sec:beyond-spherical} for Archimedean $F$. By restriction of scalars as in \cite[\S 3.4]{Li19}, we reduce to the case $F = \R$.

\begin{lemma}\label{prop:max-cpt}
	There exist maximal compact subgroups $K \subset G(\R)$ and $\overline{K} \subset \overline{G}(\R)$, such that the preimage $L$ of $\overline{K}$ in $G(\R)$ satisfies $L^\circ \subset K^\circ \cdot N(\R)^\circ$. Here we write $L^\circ$ for the identity connected component of $L$, for any real Lie group $L$.
\end{lemma}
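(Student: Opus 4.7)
The strategy is to exhibit $L^\circ$ and $K^\circ \cdot N(\R)^\circ$ as connected Lie subgroups of $G(\R)$ sharing the same Lie algebra, whence they coincide. The containment stated in the lemma then follows (indeed with equality), and the task reduces to choosing $K$ and $\overline{K}$ compatibly so that the projection $\pi: \mathfrak{g} \twoheadrightarrow \overline{\mathfrak{g}}$ at the Lie-algebra level carries $\mathrm{Lie}(K)$ onto $\mathrm{Lie}(\overline{K})$.

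To produce such compatibility, set $\mathfrak{n} := \mathrm{Lie}(N(\R))$. Since $\mathfrak{n}$ is a semisimple ideal in the reductive Lie algebra $\mathfrak{g} := \mathrm{Lie}(G(\R))$, we may decompose $\mathfrak{g} = \mathfrak{n} \oplus \mathfrak{m}$ as a direct sum of ideals (for instance, take $\mathfrak{m}$ to be the orthogonal complement of $\mathfrak{n}$ under a non-degenerate invariant bilinear form on $\mathfrak{g}$), so $\pi$ restricts to an isomorphism $\mathfrak{m} \rightiso \overline{\mathfrak{g}}$. Combining a Cartan involution of $\mathfrak{n}$ with one of $\mathfrak{m}$ yields a Cartan involution $\theta$ of $\mathfrak{g}$ which preserves $\mathfrak{n}$ and descends to a Cartan involution $\bar{\theta}$ of $\overline{\mathfrak{g}}$; the fixed subalgebra $\mathfrak{k} := \mathfrak{g}^{\theta}$ is then sent onto $\overline{\mathfrak{k}} := \overline{\mathfrak{g}}^{\bar{\theta}}$ by $\pi$. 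Take $K \subset G(\R)$ and $\overline{K} \subset \overline{G}(\R)$ to be maximal compact subgroups with Lie algebras $\mathfrak{k}$ and $\overline{\mathfrak{k}}$ respectively, obtained from global lifts of $\theta$, $\bar{\theta}$ as provided by the standard structure theory of real reductive groups.

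With these choices in place, the surjectivity in Hypothesis \ref{hyp:beyond-sphericity} makes $L = \pi^{-1}(\overline{K})$ a closed subgroup of $G(\R)$ with Lie algebra $\pi^{-1}(\overline{\mathfrak{k}}) = \mathfrak{k} + \mathfrak{n}$. On the other hand, $N(\R)^\circ$ is normal in $G(\R)$ (since $N \lhd G$), so the product $K^\circ \cdot N(\R)^\circ$ is a subgroup; it is closed because $K^\circ$ is compact and $N(\R)^\circ$ closed, it is connected, and its Lie algebra is exactly $\mathfrak{k} + \mathfrak{n}$. Two connected Lie subgroups of $G(\R)$ with the same Lie algebra coincide, hence $L^\circ = K^\circ \cdot N(\R)^\circ$, which is stronger than the inclusion sought.

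The only delicate point is lifting the abstract Lie-algebra Cartan involutions $\theta$ and $\bar{\theta}$ to global involutions of the possibly disconnected groups $G(\R)$ and $\overline{G}(\R)$ in a manner compatible with $\pi$; for groups in the Harish-Chandra class this is routine but warrants explicit verification. Alternatively one may work throughout with identity components $G(\R)^\circ$, $\overline{G}(\R)^\circ$, which suffices since the statement concerns only identity components.
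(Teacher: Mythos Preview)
Your argument is correct and follows essentially the same approach as the paper: reduce to the Lie-algebra level via the splitting $\mathfrak{g} = \mathfrak{n} \oplus \mathfrak{m}$ (the paper phrases this as ``we may assume $G = \overline{G} \times N$''), choose Cartan involutions on the factors compatibly, and lift. The paper's proof is a two-line sketch; your version simply fills in the details, including the verification that $L^\circ$ and $K^\circ \cdot N(\R)^\circ$ are closed connected subgroups with common Lie algebra $\mathfrak{k} + \mathfrak{n}$.
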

\begin{proof}
	This reduces to an assertion about reductive Lie algebras over $\R$. Thus we may assume $G = \overline{G} \times N$, and take the Cartan involutions of $G$, $\overline{G}$ and $N$ in a compatible way.
\end{proof}

\begin{proof}[Proof of Theorem \ref{prop:convergence-meromorphy-beyond}]
	Fix $\Omega \in \topwedge \check{X} \smallsetminus \{0\}$ and $\phi$ as in Proposition \ref{prop:L-trivializable}. Write every $\eta \in \mathcal{N}_\pi(X^+)$ as $\eta = \eta_0 |\phi|^{-1/4} |\Omega|$ as in Remark \ref{rem:eta-0}.
	
	Let $\pi$ be an SAF representation of $G(\R)$ that is trivial on $N(\R)$, and fix $\eta \in \mathcal{N}_\pi(X^+)$. By Remark \ref{rem:eta-0},
	\[ Z_\lambda(\eta, v, \xi) = \int_{X^+(\R)} \eta_0(v) |f|^{\lambda - \lambda_0} \xi_0 |\Omega|, \quad \xi_0 \in \Schw_0(X). \]

	As in \cite[\S 4.1]{Li19}, the proof of convergence for $\Re(\lambda) \relgg{X} 0$ reduces to the following fact: there exist a continuous semi-norm $q: V_\pi \to \R_{\geq 0}$ and a Nash function $p: X^+(\R) \to \R_{\geq 0}$ such that
	\[ |\eta_0(v)(x)| \leq q(v) p(x) \]
	for all $v \in V_\pi$ and $x \in X^+(\R)$. We refer to \cite[\S 3]{AG08} for the basic properties of Nash manifolds and Nash functions on them.
	
	Since
	\begin{compactenum}[(i)]
		\item $\pi$ factors through $\overline{G}(\R)$,
		\item $\eta_0$ factors through $\overline{\eta}_0 \in \Hom_{\overline{G}(\R)}(\pi, C^\infty(Y^+(\R); \CC))$ (see the proof of Proposition \ref{prop:finiteness-beyond}), and that
		\item every Nash function on $Y^+(\R)$ pulls back to a Nash function on $X^+(\R)$,
	\end{compactenum}
	it suffices to establish the same property for $\overline{\eta}_0$ and $Y^+(\R)$. Since $Y^+$ is spherical, the required estimates are all contained in \cite[Proposition 4.1]{Li19}. To show that the region of convergence is uniform in $\eta$, one applies Proposition \ref{prop:finiteness-beyond}. Note that the ingredients in \textit{loc.\ cit.} only require $Y^+$ to be real spherical.

	For the meromorphic continuation of zeta integrals, we follow \cite[\S 4.2]{Li19}. Choose maximal compact subgroups $K \subset G(\R)$ and $\overline{K} \subset \overline{G}(\R)$ as in Lemma \ref{prop:max-cpt}. To achieve the meromorphic continuation, it suffices to show that $\eta_0(v)$ generates a holonomic $\mathscr{D}_{X^+_{\CC}}$-module on $X^+_{\CC}$, for every $K$-finite $v \in V_\pi$. Here $\mathscr{D}$ denotes the sheaf of algebras of algebraic differential operators.

	The choice of $K$ and $\overline{K}$ implies that $v$ is $\overline{K}$-finite. Hence \cite[Proposition 4.2]{Li19} asserts that $\mathscr{D}_{Y^+_{\CC}} \cdot \overline{\eta}_0(v)$ is holonomic on $Y^+_{\CC}$. Since $\eta_0(v)$ is the pull-back of $\overline{\eta}_0(v)$ to $X^+(\R)$ (as functions), it also generates a holonomic $\mathscr{D}_{X^+_{\CC}}$-module by \cite[Theorem 3.2.3]{HTT08}. This completes the proof.
\end{proof}

Now we move to the Archimedean functional equation.

\begin{proof}[Proof of Theorem \ref{prop:convergence-meromorphy-beyond}]
	The strategy is the same as \cite[\S 6]{Li19}. Let us briefly review the ingredients which need modification for non-spherical $X$. Most of them involve differential operators, and the required works are done in \S\S\ref{sec:invariant-theory}--\ref{sec:D}.

	\begin{itemize}
		\item Finiteness of multiplicities, guaranteed by Proposition \ref{prop:finiteness-beyond} in the present context.
		\item A decomposition of $X$, stated in Proposition \ref{prop:rho-decomposition} in the present context.
		\item The regularity of the $\mathscr{D}$-modules generated by $\eta_0(v)$ for all $K$-finite $v \in V_\pi$, where $K$ and $\overline{K}$ are chosen as in Lemma \ref{prop:max-cpt}. The counterpart on $Y^+$ is known by \cite{Li19b}, and this property is preserved under pull-back to $X^+$; see \cite[Theorem 6.1.5]{HTT08}.
		\item Invariant differential operators of Capelli type coming from $(\mathcal{P}_\lambda \otimes \mathcal{Q}_\lambda)^{\overline{G}}$ (where $\lambda \in \mathbf{X}^*_\rho(\check{X})$, see \eqref{eqn:PD-decomp}), or their twists, applied to generalized matrix coefficients of $\pi$. Note that the transition \eqref{eqn:finiteness-beyond} between $X^+$ and $Y^+$ for generalized matrix coefficients is compatible with that for invariant differential operators (see Lemma \ref{prop:diff-op}).
		\item Knop's Harish-Chandra isomorphism $\mathrm{HC}$ for $Y$ and the effect of twists; see Corollary \ref{prop:HC-twist}.
		\item Formula for the top homogeneous component of the image of Capelli operators under $\mathrm{HC}$ (Corollary \ref{prop:top-term}).
	\end{itemize}
	The remaining arguments from \cite{Li19} carry over verbatim.
\end{proof}

\section{Proofs in the non-Archimedean case}\label{sec:pf-nonarch}
Take $F$ to be a non-Archimedean local field of characteristic zero. Let $(G, \rho, X)$ and $N \lhd G$ be as in Hypothesis \ref{hyp:beyond-sphericity}. As before, set $Y^+ := X^+ /\!/ N$ and denote by $\varphi$ the $N$-torsor $X^+ \to Y^+$. We also fix a minimal parabolic subgroup $P_0 = M_0 U_0$ (resp.\ $\overline{P}_0 = \overline{M}_0 \overline{U}_0$) of $G$ (resp.\ $\overline{G}$), chosen such that $P_0 \supset \overline{P}_0$, $M_0 \supset \overline{M}_0$ and $U_0 \supset \overline{U}_0$, which is of course possible.

\begin{proof}[Proof of Theorem \ref{prop:convergence-meromorphy-beyond}]
	The strategy is the same as in \cite[pp.81--82]{LiLNM}. It is based on Igusa's theory (see the formulation before \cite[Corollary 5.1.8]{SV17}) and the smooth asymptotics of scalar-valued generalized matrix coefficients on $Y^+$. We set up a diagram of $G$-varieties
	\begin{equation}\label{eqn:toroidal-embeddings}\begin{tikzcd}
		& \widehat{X} \arrow[d, "p"] \arrow[hookrightarrow, r] & \overline{X} \arrow[dd] \\
		X^+ \arrow[hookrightarrow, ru] \arrow[hookrightarrow, r] \arrow[d, "\varphi"'] & X & \\
		Y^+ \arrow[hookrightarrow, rr] & & \overline{Y}
	\end{tikzcd}\end{equation}
	where
	\begin{compactitem}
		\item all $\hookrightarrow$ are open immersions;
		\item all vertical arrows are dominant;
		\item $Y^+ \hookrightarrow \overline{Y}$ is a smooth toroidal compactification of $\overline{G}$-varieties;
		\item $X^+ \hookrightarrow \widehat{X} \hookrightarrow \overline{X}$ are toroidal embedding of $G$-varieties, and their composition $X^+ \hookrightarrow \overline{X}$ is a smooth toroidal compactification;
		\item $p: \widehat{X} \to X$ is proper birational;
		\item $\overline{X} \to \overline{Y}$ is a $G$-equivariant morphism between toroidal compactifications with respect to $\varphi$, in the sense of \cite[\S 7]{KK16}.
	\end{compactitem}

	Note that in the smooth toroidal compactifications $X^+ \hookrightarrow \overline{X}$ and $Y^+ \hookrightarrow \overline{Y}$, the complements of open $G$-orbits are divisors with normal crossings; see \cite[p.31]{SV17} for explanations in the spherical case.

	Here we are using the more general theory in \cite{KK16} of toroidal embeddings. Specifically, via the choice of minimal parabolic and Levi subgroups, one can define the $\Q$-vector spaces $\mathcal{N}_F(X^+)$ (resp.\ $\mathcal{N}_F(Y^+)$) as in \cite[\S 4]{KK16} for $X^+$ (resp. $Y^+$); the toroidal embeddings are determined by fans in these spaces.\footnote{Toric varieties become the simplest instances of toroidal embeddings by taking the reductive group $G$ to be a torus.} The quotient morphism $\varphi: X^+ \to Y^+$ being dominant, it induces $\varphi_*: \mathcal{N}_F(X^+) \to \mathcal{N}_F(Y^+)$. The objects in \eqref{eqn:toroidal-embeddings} are constructed as follows.
	\begin{itemize}
		\item $Y^+ \hookrightarrow \overline{Y}$ arises from a complete fan $\mathcal{F}'$ in $\mathcal{N}_F(Y^+)$, sufficiently fine to ensure the smoothness of $Y$;
		\item similarly, $X^+ \hookrightarrow \overline{X}$ arises from a complete fan $\mathcal{F}$ in $\mathcal{N}_F(X^+)$, and upon refining $\mathcal{F}$ we have a commutative diagram
		\[\begin{tikzcd}
			X^+ \arrow[hookrightarrow, r] \arrow[d, "\varphi"'] & \overline{X} \arrow[d] \\
			Y^+ \arrow[hookrightarrow, r] & \overline{Y} ,
		\end{tikzcd}\]
		see the general theory in \textit{loc.\ cit.};
		\item we take a fan $\mathcal{F}^\flat$ in $\mathcal{N}_F(X^+)$, sufficiently fine so that it defines a smooth toroidal embedding $X^+ \hookrightarrow \widehat{X}$ dominating $X^+ \hookrightarrow X$, by applying \cite[7.7 Proposition]{KK16} to the equivariant dominant morphism $X^+ \hookrightarrow X$;
		\item by taking $\mathcal{F}^\flat$ to have the correct support, the properness of $p: \widehat{X} \to X$ is ensured by the valuative criterion and \textit{loc.\ cit.} (more precisely, see \cite[Theorem 12.13]{Ti11});
		\item upon further refinements, we may assume $\mathcal{F}^\flat$ is a subfan of $\mathcal{F}$, so \cite[7.8 Corollary]{KK16} gives the open immersion $\hat{X} \hookrightarrow \overline{X}$.
	\end{itemize}

	Next, fix $\eta \in \mathcal{N}_\pi(X^+)$, $v \in V_\pi$ and $\xi \in \Schw(X)$ to form $Z_\lambda(\eta, v, \xi)$. Take the corresponding $\lambda_0$, $\eta_0$ and $\xi_0$ as in Remark \ref{rem:eta-0}, i.e.\ we trivialize the bundle of half-densities. To put things into Igusa's setting, following \cite[p.82]{LiLNM}, we consider
	\begin{itemize}
		\item the divisor $D := \widehat{X} \smallsetminus X^+$ in $\widehat{X}$,
		\item the proper morphism $p: \widehat{X} \to X$,
		\item $\theta := p^* (\eta_0(v)) \in C^\infty(X^+; \CC)$ and $\Xi := p^* \xi_0 \in C^\infty_c(\widehat{X}(F); \CC)$.
	\end{itemize}
	Then Remark \ref{rem:eta-0} and a change of variables give
	\begin{equation}\label{eqn:Igusa-integral}
		Z_\lambda(\eta, v, \xi) = \int_{\widehat{X}(F)} \theta |p^* f|^{\lambda - \lambda_0} \Xi \cdot |p^* \Omega| , \quad \lambda \in \Lambda_{\CC}.
	\end{equation}

	We claim that $\theta$ is $D$-finite in the sense of \cite[p.85]{SV17}. By \textit{loc.\ cit.}, the finiteness is preserved by pull-back and it suffices to show that $\theta$ is $\tilde{D}$-finite where $\tilde{D} := \overline{X} \smallsetminus X^+$. Since $\theta$ is pulled-back from $Y^+$, by \eqref{eqn:toroidal-embeddings} and the discussions in \textit{loc.\ cit.}, we are reduced to show that $\eta_0$ is $(\overline{Y} \smallsetminus Y^+)$-finite as a function on $Y^+(F)$. This is guaranteed by the theory of smooth asymptotics in \cite[Corollary 5.1.8]{SV17}.
	
	Assuming that \eqref{eqn:Igusa-integral} converges for $\Re(\lambda) \relgg{X} 0$, the rational continuation then follows from the $D$-finiteness of $\theta$, as in \cite[p.82]{LiLNM}.
	
	It remains to address the convergence. Given \eqref{eqn:toroidal-embeddings} and the smooth asymptotics for $\eta_0$ (or its $\tilde{D}$-finiteness), it remains to repeat the routine arguments of \cite[\S 5.3]{LiLNM}. Note that the cited proof is based on the Cartan decomposition for $Y^+(F)$, which is in turn a consequence of the theory of toroidal embeddings.
\end{proof}

\begin{remark}
	In the proof above, we invoked the results from \cite{SV17} such as the smooth asymptotics and Cartan decomposition, in which $G$ is usually assumed to be split. However, in Hypotheses \ref{hyp:sphericity} and \ref{hyp:beyond-sphericity}, the case of non-split $G$ and symmetric $Y^+$ is also allowed. There are two workarounds.
	\begin{enumerate}
		\item One could admit that the relevant parts of \cite{SV17} extend to non-split $G$, in view of the theory of \cite{KK16}. In particular, \cite[\S 13]{KK16} provides general Cartan decompositions.
		\item One can also use the earlier works \cite{La08, KT08} on $p$-adic symmetric spaces.
	\end{enumerate}

	With either approach, one has to take the fan $\mathcal{F}'$ in $\mathcal{N}_F(Y^+)$ to be sufficiently fine, so that the toroidal compactification $Y^+ \hookrightarrow \overline{Y}$ in \eqref{eqn:toroidal-embeddings} captures all the ``directions to the infinity'' describing the asymptotic behavior of $\eta_0(v)$.
	
	Note that by granting that the results of \cite{SV17} generalizes to non-split groups, the Hypothesis \ref{hyp:sphericity} can be weakened accordingly.
\end{remark}


\bibliographystyle{abbrv}
\bibliography{VariationSato}


\end{document}